\numberwithin{equation}{section}
\newtheorem{theorem}{Theorem}[section]
\newtheorem*{theorem*}{Theorem}
\newtheorem*{lemma*}{Lemma}
\newtheorem{proposition}[theorem]{Proposition}
\newtheorem{corollary}[theorem]{Corollary}
\newtheorem{lemma}[theorem]{Lemma}
\theoremstyle{definition}
\newtheorem{definition}[theorem]{Definition}
\theoremstyle{remark}
\newtheorem{remark}[theorem]{Remark}
\newcommand{\sumj}{\sum\limits_{j=1,2}}
\newcommand{\cH}{{\mathcal{H}}}
\newcommand{\ord}[1]{_{#1}}
\newcommand{\tper}{\varepsilon^{-1} \Tper}
\newcommand{\ttper}{t_{\rm per}}
\newcommand{\Tper}{T_{\rm per}}
\newcommand{\spec}{{\rm spec}}
\newcommand{\bx}{{\bf x}}
\newcommand{\by}{{\bf y}}
\newcommand{\bxi}{\pmb{\xi}}
\newcommand{\bkappa}{\pmb{\kappa}}
\newcommand{\bv}{{\bf v}}
\newcommand{\bk}{{\bf k}}
\newcommand{\bK}{{\bf K}}
\newcommand{\bm}{{\bf m}}
\newcommand{\bn}{{\bf n}}
\newcommand{\bw}{{\bf w}}
\newcommand{\R}{\mathbb{R}}
\newcommand{\C}{\mathbb{C}}
\newcommand{\RRR}{\mathcal{R}}
\newcommand{\CCC}{\mathcal{C}}
\newcommand{\PPP}{\mathcal{P}}
\newcommand{\tMD}{\widetilde{M}_{\rm D}^\varepsilon}
\newcommand{\usigma}{\underline{\sigma}}
\newcommand{\rev}{}
\newcommand\numberthis{\addtocounter{equation}{1}\tag{\theequation}}
\begin{document}
\title{Effective gaps in continuous Floquet Hamiltonians}
\author{Amir Sagiv$^{a}$ and Michael I. Weinstein$^{b}$ \\
\small $^a$ Department of Applied Physics \& Applied Mathematics, Columbia University,\\ \small New York, NY, USA, \href{mailto:as6011@columbia.edu}{as6011@columbia.edu} ,  \\ \small  $^b$ Department of Applied Physics \& Applied Mathematics, and Department of Mathematics, Columbia University, \\ \small New York, NY, USA, \href{mailto:miw2103@columbia.edu}{miw2103@columbia.edu} ,}


\maketitle
\begin{abstract} 
We consider two-dimensional Schr{\"o}dinger equations with honeycomb potentials and slow time-periodic forcing of the form:
$$i\psi_t (t,\bx) = H^\varepsilon(t)\psi=\left(H^0+2i\varepsilon {\rev   A (\varepsilon t) \cdot \nabla} \right)\psi,\quad H^0=-\Delta +V (\bx)  .$$ 
The unforced Hamiltonian, $H^0$,  is known to generically have Dirac (conical) points in its band spectrum. The evolution under $H^\varepsilon(t)$ of  {\it band limited Dirac wave-packets} (spectrally localized near the Dirac point) is well-approximated on large time scales ($t\lesssim \varepsilon^{-2+}$) by an effective  time-periodic Dirac equation with a gap in its quasi-energy spectrum. This quasi-energy gap is typical of many reduced models of time-periodic (Floquet) materials and plays a role in conclusions drawn about the full system: conduction vs.\ insulation, topological vs.\ non-topological bands.  Much is unknown about nature of the quasi-energy spectrum of the original time-periodic Schr{\"o}dinger equation, and it is believed that no such quasi-energy gap occurs. In this paper, we explain how to transfer quasi-energy gap information about the effective Dirac dynamics to conclusions about the full Schr{\"o}dinger dynamics.
 We introduce the notion of an {\it effective quasi-energy gap}, and establish its existence in the Schr{\"o}dinger model. In the current setting, an effective quasi-energy gap is an interval of quasi-energies  which does not support modes with large spectral projection onto  band-limited Dirac wave-packets.
  The notion of effective quasi-energy gap is a physically relevant relaxation  of the strict notion of quasi-energy spectral gap;
  if a system is tuned to drive or measure at momenta and energies near the Dirac point of $H^0$, then the resulting modes in the effective quasi-energy gap will only be weakly excited and detected.
   \end{abstract}

\section{Introduction}

In this paper we study  time-dependent Schr{\"o}dinger equations of the form:
\begin{equation}
 i\partial_t\psi =  \left( H^0+ W(t,\bx,-i\nabla) \right)\psi,
 \quad H^0=-\Delta+V(\bx)\ .
 \label{gen-model}\end{equation} 
Here,  the potential, $V(\bx)$,  models a two-dimensional medium with the symmetries of a honeycomb tiling of the plane. Such {\it honeycomb lattice potentials},  $V$, are real-valued, so that $H^0$  is self-adjoint and periodic with respect to the equilateral triangular lattice in $\mathbb{R}^2$; see Section \ref{honey} for a detailed discussion. We assume that the operator $W(T,\bx,-i\nabla)$ is $\Tper$-periodic with respect to $T$, $\Lambda$-periodic with respect to $\bx$, and self-adjoint with domain which is a subset of the domain of~$H^0$ for each $T$. Naturally occurring and engineered material systems governed 
  by models such as  \eqref{gen-model} are referred to as  {\it Floquet materials} \cite{hone1997time, nathan2015topo, ozawa2019topological}. See Section \ref{2D-mat}, where we discuss two physical settings, in condensed matter physics and in photonics, where the class of models \eqref{gen-model} arises.

Many of the important properties of graphene and related 2D materials are intimately related to {\it Dirac points} (conical band degeneracies)
 in the band structure of $H^0$. 
{\it The goal of this paper is to explore, in the context of \eqref{gen-model}, the effects of time-periodic driving on the dynamics of wave packets which are spectrally concentrated near Dirac points.}

\subsection{2D materials, honeycomb structures and Dirac points}\label{2D-mat}

 Two-dimensional materials
are  of great current interest in fundamental and applied science. The paradigm is graphene, a macroscopic single atomic layer of carbon atoms, centered on the vertices of a honeycomb lattice \cite{RMP-Graphene:09}. Single electron models of graphene, both the tight-binding (discrete)  \cite{Wallace:47} and the continuum Schr{\"o}dinger operator with a continuum honeycomb lattice potential $H^0=-\Delta+V$ \cite{FW:12}, have conical degeneracies at distinguished quasi-momentum
in the band structure. These {\it Dirac points} arise due to the symmetries of the honeycomb Schr{\"o}dinger operator. The envelope of a wave-packet, which is spectrally concentrated near a Dirac point, evolves  according to a two-dimensional Dirac equation on large time scales \cite{FW:14}. Due to the zero density of states at the Dirac energy, graphene is referred to as a semi-metal.

For time-independent Hamiltonians, opening a gap in the spectrum by breaking spatial symmetries can be leveraged to induce states with desired localization properties and energies in the gap. Spatially localized defect perturbations of such ``gapped Hamiltonians'' give rise to defect modes which are pinned to the location of the defect \cite{Figotin-Klein:97, Figotin-Kuchment:96b, hoefer2011defect, Joannopoulos}, while line-defects in the direction of periodic lattice vector of the bulk 
 vector give rise to edge states, which are plane-wave like in the direction of the line-defect and which decay transverse to it;
see e.g., \cite{delplace2011zak, FLW-2d_edge:16, Graf-Porta:13, mong2011edge,Karpeshina:97}. For line defects in systems which break $\mathcal{C}$ (Hamiltonians that do not commute with complex conjugation) the induced bulk energy gap is filled with energy spectrum; see e.g., \cite{Drouot:19,Drouot:21, drouot2020edge, LWZ:18}, a phenomena which can be explained by non-trivial topological indices; see, for example, 
  \cite{kellendonk2002edge, thiang2016k}.\\
  Time-periodic driving is another mechanism for breaking symmetries of the bulk and opening spectral gaps, and the corresponding  topological indices are typically defined in systems with gapped quasi-energy spectrum \cite{asboth2014chiral, nathan2015topo, rudner2013anomalous, rudner2020band}.
  \footnote{ An exception is the case of {\it mobility gaps} in strongly disordered discrete systems \cite{shapiro2019strongly}.}

 

 The class of time-periodically driven PDEs \eqref{gen-model} arises in physical settings, such as:
\begin{enumerate}[label=(\alph*)]
\item the modeling of a  graphene sheet, excited by a time-varying electric field  \cite{perez2014floquet, wang2013observation}. Here, $H^0=-\Delta+V$ is a single-electron Hamiltonian for graphene and time-dependence in $H^\varepsilon(t)$ models the excitation of the graphene sheet by an external electro-magnetic field. {\rev In this work, we consider a vector potential which is space-independent, and which by Maxwell's equations, induces a homogeneous time-periodic electric field; see \cite{krieger1986time}.}

\item 
the propagation of light in a hexagonal array of  helically coiled optical fiber waveguides \cite{ozawa2019topological, Rechtsman-etal:13}. Here, the Schr{\"o}dinger equation describes the propagation in the time-like longitudinal direction of a continuous-wave (CW) laser beam propagating through a hexagonal or triangular transverse array of optical fiber waveguides. Beginning with Maxwell's equations, under the nearly monochromatic and paraxial approximations, one obtains \eqref{gen-model} for the longitudinal evolution of the slowly varying envelope of the classical electric field. Suppose the fibers are longitudinally coiled. Then, in a rotating coordinate frame, we obtain \eqref{gen-model}, where $V$ models the uncoiled fiber-array, and the time-periodic perturbation, $W$, captures effect of periodic coiling.

\end{enumerate}

Other fields in which time-periodic modulations are applied to spatially periodic materials include acoustics, plasmonics, and mechanical metamaterials; see, e.g., \cite{fleury2016floquet, nash2015topological, wilson2019temporally} and references therein. In both settings (a) and (b) the operator $W(T,\bx,-i\nabla)$, while self-adjoint, does not commute with $\mathcal{C}$ and hence  \eqref{gen-model} does not have time-reversal symmetry. As in the case of time-independent  (autonomous) Hamiltonians, this is a source
  of topological phenomena.
  %
 %
Such physical systems are therefore called {\it Floquet topological insulators}.  Time-periodic Hamiltonians modeling  Floquet materials have many interesting phenomena, even more varied than their time-invariant analogues.

\subsection{Hamiltonians for Floquet materials and the monodromy operator}\label{flo}

Consider a general non-autonomous Hamiltonian system $i\partial_t Z=H(t)Z$, where for each $t$: 
\[\textrm{$H(t+\ttper)=H(t)$ and
 $H(t)$ is a self-adjoint operator acting in the Hilbert space, $\cH$.}\]
   Denote by $U(t)$ the unitary evolution operator which maps the data $Z(t=0)=Z_0\in\cH$ to the solution $Z(t)=U(t)Z_0\in \cH$ for some $t\geq 0$.
  The dynamics are characterized by  operator, $M=U(\ttper)$, since $Z(n\ttper)=M^nZ_0$,\ for all $n\ge0$.  $M$ is called the monodromy operator,  and since $U(t)$ is unitary, its spectrum is constrained to the unit circle in $\mathbb{C}$.   For $\lambda\in \spec(M)$ we write $\lambda=e^{i\mu}$ and call $i\mu$ (modulo $2\pi i$) the associated Floquet exponent.   
  If $MZ_\mu = e^{i\mu} Z_\mu$, then $p_\mu(t)\equiv e^{-i\frac{\mu}{\ttper}t} U(t)Z_\mu $ is $\ttper$-periodic and satisfies
 the eigenvalue problem for the {\it Floquet Hamiltonian} \begin{equation}\label{eq:Floq_hamil}
    \mathcal{K}(t)p(t)\ \equiv\ \left(\ i\partial_t-H(t)\ \right) p(t) = \frac{\mu}{\ttper} p(t) .
   \end{equation}
   Thus, $e^{i\mu}\in\spec(M)$ if and only if $\frac{\mu}{\ttper}$ is in the spectrum of $\mathcal{K}$ acting in $L^2(S^1;\mathcal{H})$. The  quantity $\frac{\mu}{\ttper}$ is called a {\it quasi-energy}. For autonomous Schr{\"o}dinger operators ($H(t)$ independent of $t$) the quasi-energies coincide with the eigenvalues of $H_0$,  modulo $2\pi \mathbb{Z}/ t_{\rm per}$.

 \bigskip

  \subsection{What this article is about}\label{what}
  
    We focus on the evolution of wave-packets of the special case of \eqref{gen-model} with 
  \begin{equation}
  W(t,\bx,-i\nabla)=2i\varepsilon A(\varepsilon t)\cdot \nabla,\quad 0<\varepsilon\ll1
  \label{Wdef}
 \end{equation}
{\rev  where $A(T):[0,\Tper)\to \R^2$ is continuous, $\bx$-independent, and $\Tper-$ periodic.} Hence, \eqref{gen-model} becomes
  \begin{align}\label{eq:lsA}
i\partial_t\psi (t,\bx) &= H^\varepsilon(t) \psi 
,\qquad H^\varepsilon(t)\equiv -\Delta + V(\bx) + 2i\varepsilon A(\varepsilon t)\cdot \nabla \ .
\end{align}
where $V$ is taken to be a honeycomb lattice potential; see Section \ref{honey}.  For $\varepsilon\ne0$,  the operator  $H^\varepsilon(t)$ is self-adjoint but does not commute with  $\CCC$. Since $0<\varepsilon\ll1$,  we are focusing on the regime of a {\em slowly varying} time-periodic perturbation.
{\rev To the best of our knowledge, the only previous analytical study of the slow time-periodic regime for \eqref{eq:lsA} is in \cite{ablowitz2015adiabatic}. This study focuses on the tight-binding regime for the bulk potential, and provides asymptotic analyses of linear and nonlinear edge modes in the regime where interactions with ``higher bands'' can be neglected. Our analysis makes no restrictive assumptions on the asymptotic regime of the bulk Hamiltonian
 and our study focuses on consequences of time-forcing induced interactions of the ``Dirac bands'' with bands which are distant from the Dirac point.} The case of rapidly varying time-forcing is studied, for example, in \cite{bal2021multiscale, guglielmon2018photonic, perez2015hierarchy}. 


Denote the solution of the initial value problem for \eqref{eq:lsA} with data
 $\psi(0,\cdot)=\psi_0\in H^s(\mathbb{R}^2)$, $s\ge0$, by 
 \[ \psi^\varepsilon(t)=U^\varepsilon(t)\psi_0\quad {\rm or}\quad \psi^\varepsilon(x,t)= U^\varepsilon[\psi_0](x,t).\]
 For generic honeycomb lattice potentials, $V$, there exist bands (of the unperturbed ($\varepsilon=0$) Hamiltonian $H_0$) which touch at 
{\it Dirac points}: quasi-momentum / energy pairs $(\bk_{\rm D},E_D)$ at which exactly two dispersion surfaces touch conically; see the discussion in Section~\ref{honey}. 

By exploiting the multi-scale character of \eqref{eq:lsA}, we derive  a homogenized, time-periodically forced effective Dirac Hamiltonian, $\slashed{D}(T)$, which governs the evolution of {\it Dirac wave-packets} envelopes. On a closed and invariant subspace
 of the Hilbert space, the monodromy (unitary) operator $M_{\rm D,d_0}$ associated with $\slashed{D}(T)$
 has a gap, i.e., an arc on $S^1$ with no spectrum. This closed and invariant subspace corresponds,
  in the $H^\varepsilon(t)$ (un-approximated) dynamics, to the physically
 interesting situation of {\it band-limited Dirac wave-packets}, {\it i.e.} those
  built from Floquet-Bloch modes of $H^0$, whose energies and quasi-momentum components are near a Dirac point. 
  
{\it We prove (Theorem \ref{thm:quasigap_general}) that any $L^2(\mathbb{R}^2)$ wave-packet comprised of modes \underline{within} the Floquet multiplier (quasi-energy) gap of the effective Hamiltonian, $\slashed{D}(T)$,
 is dominated by spectral components with corresponding energies and quasi-momenta bounded \underline{away} from the Dirac energy, $E_D$.}
Such wave-packets therefore  spatially oscillate on different length-scales from the Dirac modes. 
We call the arc in $S^1$ of such Floquet exponents, and the corresponding quasi-energy interval, an \underline{\it effective spectral gap}. This is a relaxation of the usual notion of spectral gap. 
 The  nature of the spectrum in the effective gap, and of $M^{\varepsilon}$ in general, is a difficult open problem; 
see the discussion in Section \ref{previous}. We conjecture that the spectrum of the monodromy operator of $H^\varepsilon(t)$ covers the entire unit circle in $\mathbb{C}$. Without relying on detailed information of the spectral measure, 
Theorem~\ref{thm:quasigap_general} provides information on the modes associated with the effective gap.

From a physical perspective, there are two main implications of Theorem \ref{thm:quasigap_general}, which justify the term ``effective gap''. We discuss this in terms of the static band structure ($H^0 = -\Delta +V$), assumed to have well-separated bands. One may think in terms of the system's  inputs
and outputs at some finite time. Consider a measuring device with sensitivity tuned to a neighborhood of the Dirac energy, $E_D$. If the parametrically forced system generates a mode with {\em quasi-energy} inside the effective gap, since it consists mainly of energies (with respect to $H^0$) far from the Dirac energy (Theorem \ref{thm:quasigap_general}), this mode will only be very weakly detected.
On the other hand, in terms of input excitations to the system, suppose the initial excitation is a Dirac wave-packet, whose energy-spectrum is localized
 near a Dirac point (Proposition \ref{prop:wp_dk}). Because of the approximation of the Schr{\"o}dinger  evolution  by the effective Dirac equation (Theorem \ref{thm:valid}), on the time-scale of the forcing period, such a mode will remain energetically localized near a Dirac point. Therefore, modes with {\em quasi-energy} in the effective gap, which are dominated by components from distant  energies (Theorem \ref{thm:quasigap_general}), can only be very weakly excited. Summarizing: if the system is energetically tuned to the Dirac point, either on the input or output side, then it will effectively behave as an insulator, with only weak excitations inside the effective gap. Theorem \ref{thm:quasigap_general} goes further than that; it provides {\em quantitative} information on the size of these excitations.

There are several steps along the way to proving Theorem \ref{thm:quasigap_general}. 
\begin{enumerate}
\item Theorem \ref{thm:valid}:\ The time-evolution \eqref{eq:lsA} for wave packet initial data, which are spectrally concentrated about a Dirac point, so-called {\it Dirac wave packets} (see \eqref{eq:slow_wp} and Section \ref{sec:bl}), is  governed by 
a time-dependent effective Dirac equation, for time scales of order $\mathcal{O}(\varepsilon^{-2+})$; Proposition \ref{prop:wp_dk}.
\item Corollary \ref{cor:mono_approx}: The time scale of validity of the effective Dirac equation is much larger than the period of temporal forcing, $\Tper\varepsilon^{-1}$. Hence, we can approximate the monodromy operator for \eqref{eq:lsA}, $M^\varepsilon$, which acts in $L^2(\mathbb{R}^2)$ in terms of an effective monodromy operator $M_{\rm Dir}$, acting on $L^2(\mathbb{R}^2;\mathbb{C}^2)$.
\item Proposition \ref{prop:Dirac_gap}: We prove that  when acting on an invariant subspace of band-limited initial data, $M_{\rm Dir}$ has a quasi-energy gap.  See Figure \ref{fig:threeSpectra}.

\item The separation of scales in the dynamical system \eqref{eq:lsA} allows application of a homogenization / averaging lemma (Lemma~\ref{lem:homog}) to 
``carry back''  the spectral gap of the effective Dirac monodromy operator, $M_{\rm Dir}$, to an effective gap of  $M^{\varepsilon}$. 
\end{enumerate}

\subsection{Previous works and remarks}\label{previous}

\paragraph{Spectral theory of parametrically forced Hamiltonians. }Since $H^\varepsilon(t)$ is invariant under translations in $\Lambda$, Floquet-Bloch theory \cite{Kuchment:12} reduces the  spectral properties of the Floquet Hamiltonian $\mathcal{K}$ (see \eqref{eq:Floq_hamil}) in the space $L^2(S^1;L^2(\mathbb{R}^2))$ 
to its action on the family of  subspaces $L^2(S^1;L^2_{\bk})$,  where $L^2_\bk$ denotes the space of $\bk-$ pseudo-periodic functions on $\R^2$ and $\bk\in\mathcal{B}$, the Brillouin zone associated with  $\Lambda$:
\[ L^2(S^1;L^2(\R^2))\ \textrm{spectrum of}\ \mathcal{K}\ =\  \textrm{Union over $\bk\in\mathcal{B}$ of the }\ \ L^2(S^1;L^2_\bk)\ \textrm{spectra of}\ \mathcal{K}
  \]
Spectral problems of this latter type, which correspond to time-periodically forced wave equations on the spatial torus, have been explored in the deep technical works \cite{bambusi2001time, bambusi2017reduce, eliasson2008reducibility,  feola2020reducibility, howland1989floquetII, montalto2021linear}.  These results focus on establishing the existence of point spectra of $\mathcal{K}(t)$ under strong growth assumptions on the eigenvalue spectrum of the unforced wave equation. The nature of the $L^2(S^1;L^2_\bk)$ spectrum of $\mathcal{K}$ when these growth conditions are violated is an open problem~\cite{montalto2021linear}.

The nature of the $L^2(S^1;L^2(\mathbb{R}^2))$ spectrum of 
$\mathcal{K}$ and the corresponding $L^2(S^1;L^2(\mathbb{R}^2))$ spectrum (a subset of  $S^1$) of the monodromy operator are also open problems. That we can expect the latter to cover the unit circle can be understood, heuristically, via the mechanism physicists refer to as {\it band folding}. This intuition is based on high-energy asymptotics of the unforced problem; for any fixed $\bk\in\mathcal{B}$,  the high quasi-energy bands (via Weyl asymptotics) are approximated by the eigenvalues of $-\Delta$ on $L^2(\R^2/\Lambda)$, modulo $2\pi\mathbb{Z} / \Tper$, which for typical $\Tper$ are dense in $S^1$; see Figure \ref{fig:fold}. Certainly 
 the union over all $\bk \in \mathcal{B}$ would be expected to be dense as well. 
  We justify this argument in the context of our parametrically forced effective Dirac operator, $\hat{\slashed{D}}(T)$, in Remark \ref{rem:BL} and Section \ref{sec:wkb}. 
  
 \paragraph{Spectral gaps in other reduced models.} Effective (approximate) models are often used to provide analytically or computationally tractible precise descriptions in specified asymptotic regimes. One class consists of homogenized operators, such as our effective Dirac operator, is at the center of this work; see Section \ref{sec:Dirac} and \cite{FW:14}.
Another class consists of spatially discrete and periodic in time tight-binding effective models of crystalline Floquet materials, 
 derived for example, via Magnus (high-frequency) expansion \cite{blanes2009magnus, bukov2015high}, governing low-lying mode-amplitudes
 of the unforced problem. Here,  $\mathcal{K}$ acts in $l^2(S^1_{\Tper};\mathcal{H})$, where $\mathcal{H}=l^2(\mathbb{G};\mathbb{C}^N)$ with $\mathbb{G}$
  being a translation invariant spatial lattice and $N$ the number of degrees of freedom per unit cell. The system has $N$ Floquet-exponents bands defined over the Brillouin zone (graphs of the map from $\bk$ to the eigenvalues of the monodromy over $L^2_{\bk}$).	
  For the case of graphene $\cH=l^2(\mathbb{H};\mathbb{C}^2)$, where $\mathbb{H}$ denotes the set of honeycomb vertices in $\mathbb{R}^2$ and $N=2$  is the number of carbon atoms per unit cell.  Such models can exhibit a spectral gap on the unit circle in the spectrum
   of their monodromy operator
   \cite{ablowitz2017tight, RMP-Graphene:09, Rechtsman-etal:13}.
   
   \paragraph{The significance of gaps.} As in the case of time-independent Hamiltonians, continuous spectra of quasi-energies are associated with energy propagation (``conduction'') and spectral gaps with non-propagation (``insulation''). In the Floquet systems which arise in condensed-matter physics, Kubo-type formulae, analogous to the autonomous case, are used to quantify conductance \cite{dehghani2015out, oka2009photovoltaic, wackerl2020floquet,bal2021multiscale}. For Floquet systems, topological indices, closely related to the Kubo formula, have been rigorously defined if there is a spectral gap; see, for example, \cite{graf2018bulk} and \cite{rudner2013anomalous}. 

\begin{figure}[h]
\centering
\includegraphics[scale=.6]{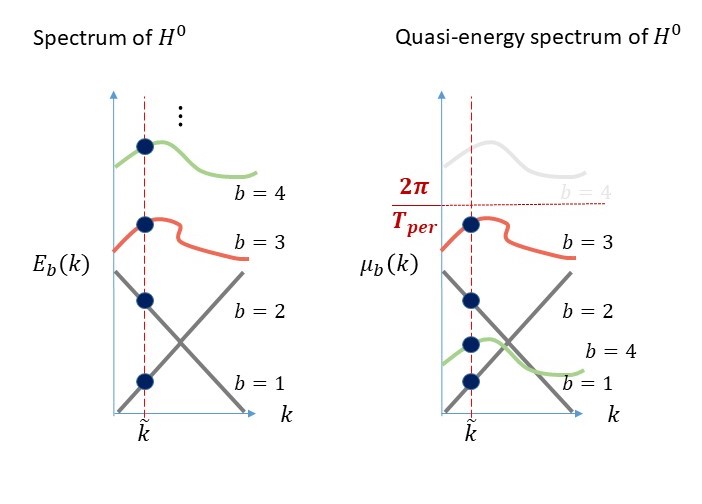}
\includegraphics[scale=.6]{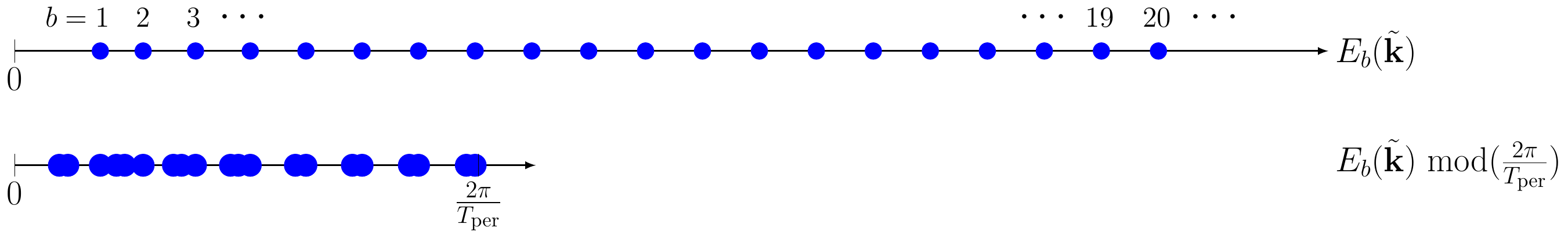}
\caption{ Band folding heuristics.  Top left: Schematic of the band spectrum of $H^0$. Lowest two bands touch conically at a Dirac point. Top right: Quasi-energy spectrum of $H^0$, viewed as  $\Tper$ time-periodic. The dispersion curve $E_4(\bk) > 2\pi /\Tper$. Modulo $ (2\pi/\Tper) $ the $\bk\mapsto E_4(\bk)$ is translated to the quasi-energy range $[0, 2\pi /\Tper]$, and the spectrum fills in. 
Middle: For a fixed $\tilde{\bk} \in \mathcal{B}$, we mark $E_1(\tilde{\bk}),\ldots, E_{20}(\tilde{\bk})$. Bottom:  The points $\{E_b(\tilde{\bk}) ~ {\rm mod}(2\pi/\Tper)\}_{b\geq 1}$ are expected to be dense in the quasi-energy range $[0, 2\pi /\Tper]$. 
}
\label{fig:fold}
\end{figure}



\subsection{Structure of the paper}
The remainder of the paper is organized as follows: In Section \ref{honey} we review relevant preliminaries in Floquet-Bloch and the spectral theory of honeycomb potentials. In Section \ref{eff-Dirac} we present the results on the approximation of the  dynamics governed by the Schr{\"o}dinger Hamiltonian, $H^\varepsilon(t)$, in terms of the effective Dirac Hamiltonian, $\slashed{D}(T)$. We also discuss the quasi-energy  gap property of the Dirac dynamics on the space of  band-limited functions. In Section~\ref{sec:bl} we discuss the properties of Dirac wavepackets. In Section \ref{sec:mainres} we present the effective gap result (Theorem \ref{thm:quasigap_general}), the main result of this paper. We present the proofs of the main results in the subsequent sections; the proof of the main result in Section \ref{sec:mainpf}, of the homogenization / averaging lemma (Lemma \ref{lem:homog}) in Section \ref{sec:pf_homog}, the derivation and proof of validity of the effective Dirac dynamics (Theorem \ref{thm:valid}) in Section \ref{sec:Dirac}, and of the spectral characterization of Dirac wave-packets (Proposition \ref{prop:wp_dk}) in Section~\ref{sec:proj}. Finally, in Section \ref{sec:wkb} we use a WKB expansion to  justify Remark \ref{rem:BL}, which states that the effective Dirac Hamiltonian, $\slashed{D}(T)$, has no quasi-energy gap .

\subsection{Notation and conventions}\label{notation}

\begin{itemize}
\item Triangular lattice: $\Lambda = \mathbb{Z}\bv_1\oplus \mathbb{Z}\bv_2 $, where 
\begin{equation}
\bv_{1} = \begin{pmatrix}\sqrt{3}/2 \\ 1/2\end{pmatrix},\quad \bv_{2} = \begin{pmatrix}\sqrt{3}/2\\ - 1/2\end{pmatrix}.\label{bv-def}
\end{equation}
Here, $\Omega \subset \mathbb{R}_\bx^2$ denotes the fundamental cell; see left panel of Fig.\ \ref{fig:hc}.
\item $\Lambda^* = \mathbb{Z}\bk_1\oplus \mathbb{Z}\bk_2$ is the dual lattice, where 
\begin{equation}  \bk_{1} =  \frac{4\pi}{\sqrt3}\begin{pmatrix} 1/2 \\ \sqrt3/2\end{pmatrix},\quad \bk_{2} =  \frac{4\pi}{\sqrt3}\begin{pmatrix} 1/2 \\ -\sqrt3/2\end{pmatrix} .\label{bk-def}\end{equation}
 The Brillouin zone, $\mathcal{B}\subset \left(\mathbb{R}_\bx\right)^*=\mathbb{R}_\bk^2$, is the dual fundamental cell.
\item $\bk_{\rm D} = \bK,\bK^\prime$ are the vertices of the Brillouin zone $\bK=(1/3)(\bk_1-\bk_2)$ and $\bK^\prime = -\bK$; see right panel of Figure \ref{fig:hc}.
\item The Pauli matrices are 
\begin{equation}
\sigma_1 = \left(\begin{array}{ll}
 0 &1 \\1 &0
\end{array} \right) \, , \quad  \sigma_2 = \left(\begin{array}{ll}
 0 &-i \\i &0
\end{array} \right) \, , \quad \sigma_3 = \left(\begin{array}{ll}
 1&0 \\ 0 &-1
\end{array} \right) \, \label{pauli}\end{equation}
\item $V(\bx)$ is a honeycomb lattice potential; see Section \ref{hlp-dp}.
\item $C(S^1_{\Tper};\mathbb{R}^2)$, with $S^1_{\Tper} = \mathbb{R}/(\Tper\mathbb{Z})$ is the space of $\mathbb{R}^2-$
vector-valued continuous functions, which are $\Tper$ periodic. 
\item $\chi(|s|< a)=$ the indicator function of the set $\{s:|s|<a\}$.
\item  For $f\in L^2(\mathbb{R}^2)$, the Fourier transform is denoted by
\begin{align*}
&\widehat{f}(\xi) = \mathcal{F}[f](\xi)= \int_{\mathbb{R}^2} e^{-i\xi\cdot X} {f}(X)dX \, ,\\
&\chi(|\nabla|<a)f =\frac{1}{(2\pi)^2}\int_{\mathbb{R}^2} e^{i\xi\cdot X}  \chi(|\xi|<a)\widehat{f}(\xi)d\xi \, ,\\
&\chi(|\nabla|<a) L^2(\mathbb{R}^2) = \{ \chi(|\nabla|<a)f\ :\ f\in L^2(\mathbb{R}^2) \} = 
\{f\in L^2(\mathbb{R}^2) : {\rm supp}(\hat{f})\subset B_a(0)\} \, .
\end{align*}
\item The spectrum of an operator, $L$, is denoted $\spec(L)$.
\item Let $H(t)$ be a self-adjoint time-dependent Hamiltonian. For the Schr{\"o}dinger equation $i\psi_t = H(t)\psi$, $\psi(0)=\psi_0$, we denote the unitary flow map by $U(t)$, {\it i.e.} $\psi(t)=U(t)\psi_0$.  
If $t\mapsto H(t)$ is $\Tper$ periodic, then we denote the monodromy operator  by  $M=U(\Tper)$. For the right hand side of \eqref{eq:lsA}, we denote the flow by $U^{\varepsilon}$ and the monodromy by $M^{\varepsilon}$.

\item For a unitary operator $U$ on a Hilbert space $\mathcal{H}$, the spectral projection-valued measure $\Pi$ on the Borel $\sigma$-algebra of the unit circle $S^1$ satisfies $$U\ =\ \int\limits_{S^1} z \, d\Pi(z)  .$$

\item {\rev The Sobolev norm $\|\cdot \|_{H^s(\R ^2)}$ for $s\in \mathbb{N}$ is defined as 
$\|f\|_{H^s(\R ^2)}^2 \equiv  \sum\limits_{|\alpha| \leq s} \| \partial^{\alpha} f\|_{L^2(\R ^2)}^2  \, .$}

\end{itemize} 

\subsection{Acknowledgments}  The authors thank M.\ Rechtsman, J.\ Guglielmon. S.\ Tsesses, and J.\ Shapiro for stimulating discussions.
M.I.W. was supported in part by US National Science Foundation grants DMS-1620418 and DMS-1908657, and Simons Foundation Math + X Investigator Award \#376319.

\section{Honeycomb potentials and Dirac points}\label{honey}

We give a brief review of spectral theory of periodic elliptic operators, and  Dirac points for honeycomb Schr{\"o}dinger operators \cite{Eastham:74,Kuchment:16,RS4,FW:12}.

\subsection{ Review of Floquet-Bloch theory}
Consistent with the notation of \eqref{eq:lsA}, we set:
$$ H^0 = -\Delta +V(\bx) \, , \qquad {\rm acting}~{\rm on}~L^2(\mathbb{R}^2) \, ,$$
where $V$ is real-valued and periodic with respect a lattice 
$\Lambda = \mathbb{Z} \bv_1 \oplus \mathbb{Z} \bv_2 \subset\R^2$. The associated dual lattice is $\Lambda^* = 
\mathbb{Z} \bk_1\oplus \mathbb{Z}\bk_2 $ and $\bk_m \cdot \bv_n  = 2\pi\delta_{nm}$; see \eqref{bv-def}, \eqref{bk-def}.
  Let $\Omega\subset \mathbb{R}_\bx^2$ denote a fundamental cell for  $\mathbb{R}_\bx^2/\Lambda$ 
   and  $\mathcal{B}$, the {\it Brillouin zone},  denote the fundamental cell for $ \mathbb{R}_\bk^2/\Lambda$. 
     For each quasi-momentum (crystal momentum) $k\in \mathcal{B}$, denote by $L^2_\bk$ the space of $\bk$-pseudoperiodic functions 
$$L^2_\bk \equiv \{ u \in L^2_{\rm loc}(\mathbb{R}^2) ~~ {\rm such}~{\rm that}~~ u(x+\bv) = e^{i\bk\cdot \bv}u(x) \, , \quad \bv\in \Lambda   \} \, .$$
The space $L^2(\mathbb{R}^2)$ admits the fiber decomposition $L^2(\mathbb{R}^2) = \int^\oplus_{\mathcal{B}} L^2_\bk \, d\bk$. Since $H^0$ is translation invariant with respect to $\Lambda$,  it has the fiber decomposition: $H^0 = \int^\oplus_{\mathcal{B}} H_\bk^0 \, d\bk$, where $H_\bk^0=H^0\large|_{L^2_\bk}$, the self-adjoint operator $H^0$ acting in $L^2_\bk$, has compact resolvent and therefore has an infinite sequence of finite multiplicity real eigenvalues, tending to infinity,  
\[ E_1(\bk)\le E_2(\bk)\le \dots E_b(\bk)\le \dots ,\]
listed with multiplicity, with corresponding eigenmodes $\Phi_b(x;\bk)\in L^2_\bk$, known as Bloch modes, which satisfy
\[ H^0\Phi_b (x;\bk) = E_b (\bk) \Phi_b (x;\bk),\quad \Phi_b(\cdot;\bk)\in L^2_\bk\ .\]
The maps $\bk\in\mathcal{B}\mapsto E_b(\bk)$ are Lipschitz continuous. 
 The two-dimensional surfaces $E_b(\bk)$ are called the {\it dispersion surfaces} $H^0$. We refer to the collection of 
  all pairs $(E_b(\bk),\Phi_b(x,\bk))$, where $b\ge1$ and $k\in\mathcal{B}$, as the {\it band structure} of $H^0$. 
  
\subsection{Honeycomb lattice potentials and Dirac points}\label{hlp-dp}

Here and henceforth, $\Lambda$ denotes the equilateral triangular lattice in $\R^2$; see \eqref{bv-def}. A sufficiently regular function $V$ is a {\it honeycomb  potential} if  $V$ is
 real-valued,  $\Lambda$-periodic, even and $2\pi/3$--rotationally invariant; see \cite[Definition 2.1]{FW:12}: 
\begin{subequations}\label{symm-def}
\begin{equation}
 [\CCC,V(\bx)]=0 \, ,\quad [\PPP ,V(\bx)]=0 \, ,\quad  
  [\RRR,V(\bx)]=0 \, , \ \ \ \  \text{ where} \\
  \end{equation}
  \begin{equation}
\CCC[f](\bx) \equiv \overline{f(\bx)}\, ,\quad \PPP[f](\bx) \equiv f(-\bx) \, ,\quad \RRR[f](\bx) \equiv f(R^*\bx)\ \, .
\end{equation}
\end{subequations}
An example of a honeycomb potential is a two dimensional infinite array of ``atomic potential wells'' centered on the vertices of a triangular or honeycomb lattice; see \cite[Section 2.3]{FW:12}. The honeycomb case corresponds to the single electron model of graphene; see Figure \ref{fig:hc}.

A {\it Dirac point} of $H^0$ is a quasi-momentum / energy pair, $(\bk_{\rm D},E_{\rm D})$, where two consecutive dispersion surfaces touch 
in a right circular cone; there exists $v_{\rm D}>0$ such that,  as $|\bk-\bk_{\rm D}|\to0$,
 \begin{equation}
 E_{\pm}(\bk) = E_{\rm D} \pm v_{\rm D} |\bk-\bk_{\rm D}|\cdot \left(1+o|\bk-\bk_{\rm D}|\right)\ .
 \label{D-cone}
 \end{equation} 
 Associated with a Dirac point, $(\bk_{\rm D},E_{\rm D})$, is an eigenvalue of multiplicity two of $H^0$ acting in  $L^2_{\bk_{\rm D}}$.

In \cite{FW:12} (see also \cite{FLW-MAMS:17,FLW-CPAM:17,LWZ:18}) it is proved that for generic  honeycomb potentials, the band structure of $H^0$ contains Dirac points, $(\bk_{\rm D},E_D)$,
 where $\bk_{\rm D}$ varies over the six vertices   of the Brillouin zone $\mathcal{B}$, the so-called {\it high symmetry quasi-momenta}. There are exactly two independent high symmetry quasi-momenta, designated $\bK$ and $\bK^\prime$; a choice with $\bK^\prime=-\bK$ is shown in Figure~\ref{fig:hc}.

\begin{figure}[h]
\centering
\includegraphics[scale=1]{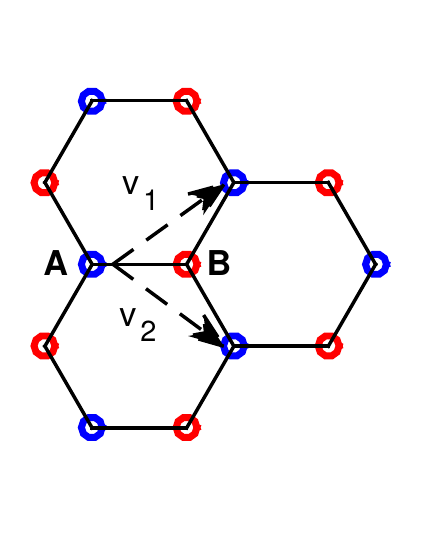}\hspace{2cm}
\includegraphics[scale=1]{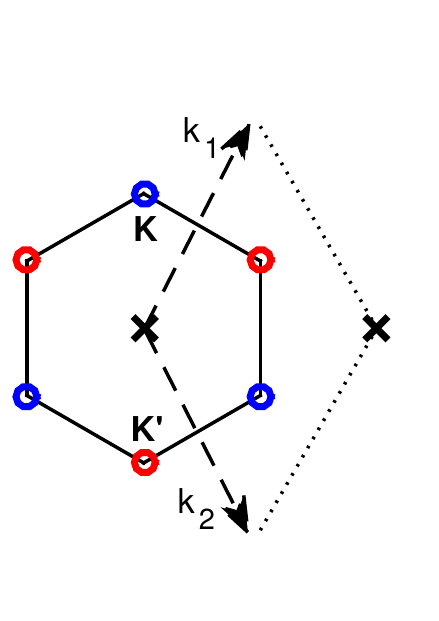}
\caption{Left: Part of the honeycomb lattice in $\R^2$. Indicated are basis vectors $\bv_1$ and $\bv_2$  of the equilateral triangular lattice, $\Lambda$. An example of a honeycomb potential (Section \ref{hlp-dp}) is one  consisting of radially symmetric wells centered at vertices of the honeycomb \cite{FW:12}. Right: Brillouin zone $\mathcal{B}$, high-symmetry points $\bK$ and $\bK'$ and basis vectors, $\bk _1$ and $\bk _2$, of dual lattice $\Lambda^*$.
}
\label{fig:hc}
\end{figure}

  \noindent{\bf N.B.} {\it Throughout this paper we will focus on a Dirac point at $(\bk_{\rm D},E_D)=(\bK,E_D)$. The results for the Dirac point $(\bK^\prime,E_D)$  can be derived using symmetries.}

Corresponding to a Dirac point at $(\bk_{\rm D},E_D)=(\bK,E_D)$, is a two-dimensional eigenspace
 $L^2_{\bK}-$ eigenspace of~$H^0$:
    \[  {\rm nullspace}_{L^2_{\bK}}\left(H^0-E_D I\right)\ =\ \{\Phi_1(\bx),\Phi_2(\bx)\}.\]  
Using honeycomb symmetries, a basis can be chosen such that
 \begin{equation}
 \RRR[\Phi_1]=\tau \Phi_1, \quad \RRR[\Phi_2]=\overline\tau\Phi_2\quad   {\rm and}\quad 
 \Phi_2(\bx)=\overline{\Phi_1(-\bx)}.\label{tbt} 
 \end{equation}
Here, $1, \tau$ and $\overline\tau$ denote the cubic roots of unity.   We next record inner product relations, consequences of  \eqref{tbt},  which
   play an important role in the derivation of effective Dirac dynamics; see Section \ref{sec:Dirac} and \cite{FW:14}.

 \begin{proposition}\cite[Prop.\ 4.1]{FW:12} \label{ipPhi}
 \begin{align}
\left\langle \Phi_1,\nabla\Phi_1\right\rangle_{L^2_{\bK}} &=\ 
\left\langle \Phi_2,\nabla\Phi_2\right\rangle_{L^2_{\bK}} 
=\ \begin{pmatrix} 0\\ 0\end{pmatrix},\quad a=1,2 \label{1nab1}
\end{align}
and 
 \begin{equation}\label{vF}
\left\langle \Phi_1,-2i\nabla\Phi_2\right\rangle_{L^2_{\bK}} =\ v_{\rm D}\ \begin{pmatrix} 1\\ i\end{pmatrix} .
\end{equation}
The constant, $v_{\rm D}=\frac12(1, -i) \left\langle \Phi_1,-2i\nabla\Phi_2\right\rangle_{L^2_{\bK}}$, is known as the Dirac velocity  or Fermi velocity. By an appropriate choice of phase for $\Phi_1$,  $v_{\rm D}$ can be chosen to be non-negative
 and has been proved to be generically nonzero \cite{FLW-MAMS:17, FW:12}.
 \end{proposition}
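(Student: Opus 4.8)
My plan is to derive both identities from the single ingredient of the $2\pi/3$-rotation symmetry $\RRR$ of \eqref{symm-def}; the normalization of the $\Phi_a$ plays no role in the structural part. I would begin by recording three elementary facts. (i) Since the high-symmetry quasimomentum $\bK$ is fixed modulo $\Lambda^*$ by the rotation, $\RRR$ restricts to a \emph{unitary} operator on $L^2_{\bK}$, so $\langle\RRR f,\RRR g\rangle_{L^2_{\bK}}=\langle f,g\rangle_{L^2_{\bK}}$; note also that the integrals in \eqref{1nab1}--\eqref{vF} are well defined precisely because the integrands $\overline{\Phi_a}\,\partial_j\Phi_b$ are genuinely $\Lambda$-periodic. (ii) As $R^*$ is orthogonal and $V$ is $R^*$-invariant, $[\RRR,H^0]=0$, hence $\RRR$ preserves the two-dimensional Dirac eigenspace $\{\Phi_1,\Phi_2\}$ and, by the basis choice \eqref{tbt}, acts there as $\mathrm{diag}(\tau,\overline\tau)$ with $\tau$ a primitive cube root of unity. (iii) The chain rule yields the intertwining identity $\RRR\,\nabla=R^*\,\nabla\,\RRR$, so that $\RRR(\nabla\Phi_a)=\tau_a\,R^*\nabla\Phi_a$ with $\tau_1=\tau$, $\tau_2=\overline\tau$.

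To establish \eqref{1nab1}, I would set $P_a:=\langle\Phi_a,\nabla\Phi_a\rangle_{L^2_{\bK}}\in\C^2$, insert $\RRR$ into both slots (legitimate by (i)), and invoke (ii)--(iii); the unimodular factor $\overline{\tau_a}\tau_a=1$ drops out, leaving the fixed-point relation $P_a=R^*P_a$. Since the nontrivial order-three rotation $R^*$ has eigenvalues $e^{\pm2\pi i/3}\neq1$ on $\C^2$, it has no nonzero invariant vector, so $P_a=0$. (For $a=2$ one could instead reduce to $a=1$ using $\Phi_2(\bx)=\overline{\Phi_1(-\bx)}$.)

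For \eqref{vF}, I would put $Q:=\langle\Phi_1,\nabla\Phi_2\rangle_{L^2_{\bK}}$ and run the same computation; this time the scalar factor is $\overline\tau\cdot\overline\tau=\overline{\tau}^2=\tau\neq1$, so $Q=\tau\,R^*Q$, i.e.\ $Q$ lies in the $\overline\tau$-eigenspace of $R^*$. That eigenspace is one complex dimension, namely one of the two lines $\C\begin{pmatrix}1\\ i\end{pmatrix}$, $\C\begin{pmatrix}1\\ -i\end{pmatrix}$; a short computation with the explicit matrix $R^*$ and the value of $\tau$ shows it is the first, consistently with \eqref{vF}. Thus $\langle\Phi_1,-2i\nabla\Phi_2\rangle=\alpha\begin{pmatrix}1\\ i\end{pmatrix}$ for some $\alpha\in\C$, and pairing with $\tfrac12(1,-i)$ identifies $\alpha$ with the scalar $v_{\rm D}$ in the statement. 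Finally, the overall phase of $\Phi_1$ is still free: the substitution $\Phi_1\mapsto e^{i\theta}\Phi_1$, $\Phi_2\mapsto e^{-i\theta}\Phi_2$ preserves every relation in \eqref{tbt} while multiplying $\alpha$ by $e^{-2i\theta}$, so $\theta$ can be chosen to make $v_{\rm D}$ real and nonnegative. Its generic non-vanishing I would not reprove — it is the main result of \cite{FW:12} (see also \cite{FLW-MAMS:17}), which I would simply cite.

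The main place where care is needed is the bookkeeping of chirality: matching the orientation convention for $R^*$ against the chosen value of $\tau$, so that the $\overline\tau$-eigenline of $R^*$ is indeed $\C\begin{pmatrix}1\\ i\end{pmatrix}$ rather than its complex conjugate. This is purely conventional but easy to get backwards. The only substantive external input is the genericity of $v_{\rm D}\neq0$, imported from \cite{FW:12}; everything else reduces to the elementary representation-theoretic fact that the nontrivial $\mathbb{Z}_3$ rotation has no invariant vectors and exactly two distinguished complex eigenlines in $\C^2$.
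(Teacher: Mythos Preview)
Your argument is correct and is essentially the standard symmetry proof: exploit unitarity of $\RRR$ on $L^2_{\bK}$, the intertwining $\RRR\nabla=R^*\nabla\RRR$, and the diagonal action \eqref{tbt} to force $P_a$ into the (trivial) fixed space of $R^*$ and $Q$ into a one-dimensional eigenline of $R^*$; then absorb the residual phase.

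Note, however, that the present paper does \emph{not} itself prove Proposition~\ref{ipPhi}: it is merely stated with a citation to \cite[Prop.~4.1]{FW:12}. Your write-up is in fact a faithful reconstruction of the argument in \cite{FW:12}, so there is nothing to compare beyond saying that you have supplied what the paper only quotes. Your caveat about the chirality bookkeeping (matching the orientation of $R$ to the choice of $\tau$ so that the $\overline\tau$-eigenline is $\C\begin{pmatrix}1\\ i\end{pmatrix}$) is exactly the right place to be careful, and your handling of the phase freedom via $\Phi_1\mapsto e^{i\theta}\Phi_1$, $\Phi_2=\overline{\Phi_1(-\cdot)}\mapsto e^{-i\theta}\Phi_2$ is correct. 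The generic nonvanishing of $v_{\rm D}$ is indeed an external input from \cite{FW:12,FLW-MAMS:17}, as you note.
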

 
Dirac points are robust in the following sense \cite{FW:12,LWZ:18}: the conical intersection of dispersion surfaces persists under  sufficiently small perturbations of $H^0$ which are $\Lambda-$ periodic and invariant under $\mathcal{P}\circ\mathcal{C}$. Under such perturbations, a Dirac cone may deform to an elliptical cone and the cone vertex 
may perturb away from a vertex of $\mathcal{B}$. On the other hand, a small perturbation which breaks either $\mathcal{P}$ or $\mathcal{C}$ symmetry leads to a \underline{local} gap opening, about $(\bK,E_D)$, {\it i.e.} for $\bk$ sufficiently near the vertices of $\mathcal{B}$.

\section{Effective dynamics for Dirac wave-packets}\label{eff-Dirac}

A natural class of initial conditions for \eqref{gen-model} are those whose Floquet-Bloch decomposition is concentrated in a small neighborhood 
 of a Dirac point. Indeed, this is the class of excitations for which the remarkable properties of graphene and its engineered analogues 
  have been widely explored, theoretically and experimentally \cite{RMP-Graphene:09}. 
A way to construct such data is through a slow and spatially decaying modulation of the degenerate subspace, associated with a Dirac point. The spectrally concentration of such functions about 
 the Dirac point is discussed in Section~\ref{sec:bl}.

We define a  {\it Dirac wave-packet}, associated with the Dirac point $(\bK,E_{\rm D})$ to be a two-scale function of the form:
\begin{align}\label{eq:slow_wp}
\psi^\varepsilon_{wp}(\bx) &= \varepsilon \sum\limits_{j=1,2} \alpha_{j,0} (\varepsilon \bx) \Phi_j (x) = \varepsilon {\alpha}_0(\varepsilon \bx)^{\top} \Phi(\bx;\bK) ,\quad \textrm{where}\\
\alpha_0(X) &= \begin{pmatrix}\alpha_{1,0}(X) \\ \alpha_{2,0}(X)\end{pmatrix},\quad {\Phi}(\bx;\bK) = \begin{pmatrix}\Phi_1(\bx) \\ \Phi_2(\bx)\end{pmatrix} .
\end{align}
Here, $\big\{\Phi_1(\bx),\Phi_2(\bx)\big\}$ denotes the distinguished basis associated with the Dirac point, introduced
 in Section \ref{hlp-dp} and $\alpha_0\in H^s(\mathbb{R}^2;\mathbb{C}^2)$ for $s\geq 0$. The parameter
$\varepsilon>0$ is taken to be small and 
the prefactor of $\varepsilon$ in \eqref{eq:slow_wp} ensures that $\|\psi^\varepsilon_{wp}\|_2\sim \|\alpha_0\|_2$ is of order $1$. 

A key part of our analysis is the observation that the evolution of  Dirac wave packet initial data \eqref{eq:slow_wp},
 under the Schr{\"o}dinger equation \eqref{eq:lsA} is well-approximated, on very long time scales, by the two-scale function $\varepsilon \alpha(\varepsilon \bx,\varepsilon t)^\top \Phi(\bx,\bk _D)$, where 
  the envelope functions $\alpha(X,T)=(\alpha_1,\alpha_2)^\top$ evolve according to an {\it effective (homogenized) magnetic Dirac Hamiltonian}  $\slashed{D}_{A}(T)$ with magnetic potential $A(T)$:
   \begin{equation} 
\slashed{D}_A(T) \equiv v_{\rm D}\Big[ \left(\frac{1}{i} \partial_{X_1}+A_1(T)\right)\sigma_1 -  \left(\frac{1}{i}\ \partial_{X_2} + A_2(T)\right)\sigma_2 \Big] \, .
\label{Dirac-op}\end{equation}
Here, $\sigma_1$ and $\sigma_2$ denote standard Pauli matrices; see \eqref{pauli}.

\begin{remark}\label{def-Dirac}
 Effective magnetic Dirac operators have been derived to explain phenomena in other settings; see, for example,
the recent work on strained photonic crystals and Landau levels, \cite{GRW:21}, and references cited therein. 
\end{remark}

We shall write
\[   {\alpha}(T) =  U_{\rm Dir}(T)\alpha_0\quad {\rm or}\quad {\alpha}(T,X) =  U_{\rm Dir}[\alpha_0](X,T)\]
for the solution of the initial value problem (IVP)
     \begin{align}  
      i \partial_T \alpha(T,X) &= \slashed{D}_{A}(T) \alpha(T,X), \qquad \alpha(0,X) = \alpha_0(X)\in H^s(\mathbb{R}^2;\mathbb{C}^2),\quad  s\ge0. 
\label{eq:diracA}
\end{align} 
Since $\slashed{D}_{A}(T)$ is self-adjoint, the  Dirac evolution \eqref{eq:diracA} is unitary in $L^2(\R^2_X; \C ^2)$. Furthermore, since $\slashed{D}_{A}(T)$ commutes with spatial translations we have, for any $s\ge0$ it is also unitary in $H^2(\R^2 ;\C^2)$:
\begin{equation}
 \|\alpha(T)\|_{H^s(\R^2 ;\C ^2)}\ =\  \|\alpha_0\|_{H^s(\R^2 ;\C ^2)},\quad \textrm{for all $T$}.\label{unitarity}\end{equation}

\begin{theorem}[Effective magnetic Dirac dynamics]
\label{thm:valid}
There exists $\varepsilon_0>0$ such that for all $0<\varepsilon<\varepsilon_0$, the following holds:
 Consider \eqref{eq:lsA}, the Schr{\"o}dinger equation with time periodic Hamiltonian $H^\varepsilon(t)$, and initial data, $\psi^\varepsilon_{wp}$ of Dirac wave packet type
 \eqref{eq:slow_wp} with $\alpha_0 \in H^4(\mathbb{R}^2;\mathbb{C}^2)$. 
Fix constants $T_0>0$ and  $0<\rho < 1$. 

{\rev  Then, there exists a constant $C$, which depends on $\rho$, $T_0$, and $\|{\alpha} _0\|_{H^4(\R ^2;\C ^2)}$,} such that
\begin{align}\label{eq:psi0_validity}
 \Big\|U^\varepsilon[\psi^\varepsilon_{wp}](t,\bx) - \varepsilon U_{\rm Dir}[\alpha_0](\varepsilon \bx,\varepsilon t)^\top \Phi(\bx;\bk _D) e^{-iE_Dt} \Big\|_{L^2(\mathbb{R}_x^2)}  &\le C \varepsilon^{\rho}  \, , \qquad 0\le t\le  T_0\ \varepsilon^{-(2-\rho)} \,
\end{align}
where $\psi_0\mapsto U^\varepsilon(t)\psi_0$ denotes the solution of Schr{\"o}dinger equation \eqref{eq:lsA} with $\psi(0,x) = \psi_0$.
\end{theorem}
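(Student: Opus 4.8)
The plan is to derive the effective Dirac equation via a formal multiple-scales ansatz, and then justify it rigorously by a Duhamel/Gr\"onwall estimate controlling the residual. Let me set up the ansatz first. I would look for an approximate solution of \eqref{eq:lsA} of the form
\begin{equation}
\psi_{\rm app}^\varepsilon(t,\bx) = e^{-iE_D t}\Big(\varepsilon\, \alpha(\varepsilon\bx,\varepsilon t)^\top\Phi(\bx;\bK) + \varepsilon^2 \psi^{(2)}(\varepsilon\bx,\varepsilon t,\bx) + \cdots\Big),
\end{equation}
where $X=\varepsilon\bx$, $T=\varepsilon t$ are the slow variables, $\alpha$ is to be determined, and $\psi^{(2)}$ is a corrector that is $\Lambda$-pseudoperiodic in $\bx$ with quasimomentum $\bK$. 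Plugging into \eqref{eq:lsA} and collecting powers of $\varepsilon$: at order $\varepsilon$ one gets $(H^0-E_D)\big(\alpha^\top\Phi\big)=0$, automatically satisfied since $\Phi_1,\Phi_2\in\mathrm{nullspace}_{L^2_\bK}(H^0-E_D)$. At order $\varepsilon^2$ one obtains an equation of the form $(H^0-E_D)\psi^{(2)} = F[\alpha]$, where $F$ collects the cross terms $-2\nabla_X\cdot\nabla_\bx(\alpha^\top\Phi)$, the time derivative $-i\partial_T(\alpha^\top\Phi)$, and the forcing term $2iA(T)\cdot\nabla_\bx(\alpha^\top\Phi)$. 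The Fredholm solvability condition for this equation — that $F[\alpha]$ be orthogonal in $L^2_\bK$ to $\Phi_1$ and $\Phi_2$ — is exactly what produces the effective Dirac equation \eqref{eq:diracA}: the inner product relations \eqref{1nab1} kill the self-coupling of the $\nabla_X$ term, and \eqref{vF} converts $\langle\Phi_a,-2i\nabla_\bx\Phi_b\rangle$ into the $v_{\rm D}(\sigma_1\partial_{X_1}-\sigma_2\partial_{X_2})$ structure, while the $A(T)$ term contributes the magnetic shift $v_{\rm D}(A_1\sigma_1-A_2\sigma_2)$. With $\alpha$ chosen to solve \eqref{eq:diracA}, $\psi^{(2)}$ is then defined by inverting $H^0-E_D$ on the orthogonal complement of the Dirac eigenspace (this is where a spectral gap of $H^0$ around $E_D$, away from $\bK$, is implicitly used, but near $\bK$ one inverts on the complement of $\{\Phi_1,\Phi_2\}$ — the resolvent is bounded there).

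Next I would estimate the residual $\mathcal{R}^\varepsilon \equiv (i\partial_t - H^\varepsilon(t))\psi_{\rm app}^\varepsilon$. By construction the orders $\varepsilon$ and $\varepsilon^2$ cancel, so $\mathcal{R}^\varepsilon = O(\varepsilon^3)$ in $L^2(\mathbb{R}^2_\bx)$, with the constant controlled by Sobolev norms of $\alpha(\cdot,T)$ — and by unitarity \eqref{unitarity} of the Dirac flow in $H^s$, those norms are bounded by $\|\alpha_0\|_{H^s}$ uniformly in $T\in[0,T_0]$ (one needs $\alpha_0\in H^4$ because the corrector $\psi^{(2)}$ involves two extra $\nabla_X$ derivatives of $\alpha$ and the residual involves one more, so roughly three derivatives past the leading $H^1$-type control; hence $H^4$). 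Then Duhamel's formula for the true solution gives
\begin{equation}
\big\|U^\varepsilon(t)\psi^\varepsilon_{wp} - \psi^\varepsilon_{\rm app}(t)\big\|_{L^2} \le \big\|U^\varepsilon(t)\big(\psi^\varepsilon_{wp}-\psi^\varepsilon_{\rm app}(0)\big)\big\|_{L^2} + \int_0^t \big\|\mathcal{R}^\varepsilon(s)\big\|_{L^2}\,ds.
\end{equation}
The first term is $O(\varepsilon^2)$ (initial mismatch from the $\varepsilon^2\psi^{(2)}$ corrector), and the integral is $\le \varepsilon^3 C' t$. Taking $t\le T_0\varepsilon^{-(2-\rho)}$ gives a bound $\le C(\varepsilon^2 + \varepsilon^{1+\rho})\le C\varepsilon^{1+\rho}$; since $\rho<1$ this is $\le C\varepsilon^\rho$, which is the claimed estimate (in fact slightly better). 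Finally I would note that $\|\psi^\varepsilon_{\rm app}(t) - \varepsilon U_{\rm Dir}[\alpha_0](\varepsilon\bx,\varepsilon t)^\top\Phi(\bx;\bK)e^{-iE_D t}\|_{L^2} = O(\varepsilon^2)$ (just the corrector again), so the triangle inequality finishes the proof. Well-posedness of \eqref{eq:lsA} in $H^4$ on these time scales, needed to make Duhamel legitimate, follows from self-adjointness of $H^\varepsilon(t)$ and the fact that the perturbation $2i\varepsilon A(\varepsilon t)\cdot\nabla$ is $H^0$-bounded with small relative bound, uniformly in $t$.

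The main obstacle I anticipate is the bookkeeping in the residual estimate: one must track precisely how many derivatives of $\alpha$ appear at each order, verify that the corrector $\psi^{(2)}$ (and possibly a third-order corrector $\psi^{(3)}$, if needed to push the residual to a clean power) is well-defined and has $L^2_\bK$-norm controlled by a finite resolvent bound, and confirm that the fast/slow cross terms — in particular the mixed derivative $\nabla_X\cdot\nabla_\bx$ acting on $\alpha(\varepsilon\bx,\cdot)^\top\Phi(\bx)$ — are handled correctly under the change of variables (each $\nabla_X$ costs a factor $\varepsilon$ but also a derivative on $\alpha$). A secondary subtlety is uniformity in $T_0$: since the Dirac flow is unitary in every $H^s$, the Sobolev norms of $\alpha$ do not grow, so the constant $C$ depends on $T_0$ only through the explicit factor $T_0$ in $\int_0^t\|\mathcal{R}^\varepsilon\|\,ds$ — this is why the statement can afford a constant depending on $T_0$ and $\|\alpha_0\|_{H^4}$ but nothing worse. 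None of the individual steps is deep; the care is entirely in the multiscale algebra and in matching the power of $\varepsilon$ to the time horizon $\varepsilon^{-(2-\rho)}$.
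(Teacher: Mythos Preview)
Your approach is essentially the paper's: a multiscale ansatz yielding the effective Dirac equation as the Fredholm solvability condition, followed by a Duhamel-type energy estimate on the corrector. Two small discrepancies are worth flagging. First, with a single corrector $\psi^{(2)}$ the residual is $O(\varepsilon^2)$ in $L^2(\mathbb{R}^2_\bx)$, not $O(\varepsilon^3)$: the substitution $X=\varepsilon\bx$ costs a factor $\varepsilon^{-1}$ in every $L^2_\bx$ norm (e.g.\ $\|\Delta_X\alpha(\varepsilon\cdot)\|_{L^2_\bx}\sim\varepsilon^{-1}\|\alpha\|_{H^2}$), so Duhamel over $t\le T_0\varepsilon^{-(2-\rho)}$ gives exactly $\varepsilon^\rho$, not $\varepsilon^{1+\rho}$---the ``slightly better'' remark is therefore not justified, though the theorem still follows. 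Second, the paper pushes the expansion one order further, introducing a second corrector $\Psi_2$; this forces one to include a kernel component $\varepsilon\beta^\top\Phi$ in the first corrector, and $\beta$ solves a \emph{forced} Dirac equation and hence grows linearly in $T$ (Lemma~\ref{lem:dirac_bds}), contributing the $t^2\varepsilon^4$ term in the final estimate and the $H^4$ regularity requirement. Your one-corrector scheme is actually sufficient for the stated $\varepsilon^\rho$ bound and needs only $\alpha_0\in H^3$, so in this respect it is cleaner than the paper's argument.
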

To prove Theorem \ref{thm:valid} we first derive an effective (homogenized) Dirac equation, via a formal multiple scale expansion, which we expect captures the dynamics on the desired long time-scale, and we then estimate the error in this approximation. 
The details are presented in Section \ref{sec:Dirac}. 
 
 Since the effective dynamics given in Theorem \ref{thm:valid} are valid, with small error, on a time scale much larger than
  the period of temporal forcing $\sim\varepsilon^{-1}$, 
  we can approximate the monodromy operator, $M^\varepsilon=U^\varepsilon(\varepsilon^{-1}\Tper)$ for the Schr{\"o}dinger evolution  \eqref{eq:lsA} applied to Dirac wave packets
   using the monodromy operator, $M_{\rm Dir}=U_{\rm Dir}(\Tper)$ of the effective Dirac dynamics.

\begin{corollary}[$M_{\rm Dir}$ as an approximation of $M^\varepsilon$]\label{cor:mono_approx}
Assume  $\alpha_0 \in H^4(\mathbb{R}^2;\mathbb{C}^2)$.  Then for $0<\varepsilon<\varepsilon_0$ sufficiently small
 and wave-packet data $\psi^\varepsilon_{wp}$:
\begin{align*}
 \Big\|(M^\varepsilon \psi^\varepsilon_{wp})(\bx) - \left(\tMD\psi^\varepsilon_{wp}\right)(\bx) \Big\|_{L^2(\mathbb{R}^2_x)} &\le  
   C \varepsilon\ , 
\end{align*}
where 
\begin{equation} \left(\tMD\psi^\varepsilon_{wp}\right)(\bx) \equiv \varepsilon (M_{\rm Dir}\alpha_0)(\varepsilon \bx)^\top \Phi(\bx) 
e^{-iE_D (\Tper/\varepsilon)}. \label{tMD-def}
\end{equation}
The constant, $C$,  depends on the $H^4(\R ^2 ; \C ^2)$ norm of $\alpha_0$ and is independent of $\varepsilon$.
\end{corollary}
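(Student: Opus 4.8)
\textbf{Proof proposal for Corollary \ref{cor:mono_approx}.}

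The plan is to simply specialize Theorem \ref{thm:valid} to the single time $t = \Tper/\varepsilon$, which is one period of the temporal forcing, and check that this choice of $t$ lies comfortably within the interval of validity $[0, T_0 \varepsilon^{-(2-\rho)}]$. Concretely, I would fix once and for all a convenient value of $\rho$, say $\rho = 1$ (or any $\rho \in (0,1)$ together with $T_0$ large enough), so that the right-hand time endpoint $T_0 \varepsilon^{-(2-\rho)} = T_0 \varepsilon^{-1}$ exceeds $\Tper \varepsilon^{-1}$ for all sufficiently small $\varepsilon$; choosing $T_0 = 2\Tper$, say, makes $\Tper \varepsilon^{-1} \le T_0 \varepsilon^{-1}$ hold outright. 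With $\rho$ and $T_0$ now fixed constants (depending only on $\Tper$), Theorem \ref{thm:valid} supplies a constant $C$, depending on $\rho$, $T_0$, and $\|\alpha_0\|_{H^4}$ — hence, after absorbing the fixed $\rho, T_0$, depending only on $\|\alpha_0\|_{H^4}$ — such that the approximation error is at most $C\varepsilon^{\rho} = C\varepsilon$ uniformly on the whole interval, and in particular at $t = \Tper/\varepsilon$.

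The remaining step is purely a matter of matching notation. By definition $M^\varepsilon = U^\varepsilon(\Tper/\varepsilon)$, so $(M^\varepsilon \psi^\varepsilon_{wp})(\bx) = U^\varepsilon[\psi^\varepsilon_{wp}](\Tper/\varepsilon, \bx)$. On the effective side, evaluating the Dirac approximant at $t = \Tper/\varepsilon$ gives envelope argument $\varepsilon t = \Tper$, so $U_{\rm Dir}[\alpha_0](\varepsilon\bx, \varepsilon t) = U_{\rm Dir}[\alpha_0](\varepsilon\bx, \Tper) = (U_{\rm Dir}(\Tper)\alpha_0)(\varepsilon\bx) = (M_{\rm Dir}\alpha_0)(\varepsilon\bx)$, and the scalar phase becomes $e^{-iE_D t} = e^{-iE_D(\Tper/\varepsilon)}$. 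Comparing with \eqref{tMD-def}, this is exactly $(\tMD \psi^\varepsilon_{wp})(\bx)$. Plugging these identifications into \eqref{eq:psi0_validity} at $t = \Tper/\varepsilon$ yields
\[
 \Big\| (M^\varepsilon \psi^\varepsilon_{wp})(\bx) - (\tMD \psi^\varepsilon_{wp})(\bx) \Big\|_{L^2(\mathbb{R}^2_x)} \le C\varepsilon,
\]
which is the claimed bound.

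There is no real obstacle here: the corollary is a direct corollary, and the only thing to be slightly careful about is the bookkeeping of constants — namely that once $\rho$ and $T_0$ are pinned to fixed numbers determined by $\Tper$ alone, the constant $C$ from Theorem \ref{thm:valid} depends only on $\|\alpha_0\|_{H^4(\R^2;\C^2)}$ and not on $\varepsilon$, as asserted. One should also note that $\varepsilon_0$ may need to be shrunk (depending only on $\Tper$, $T_0$) to guarantee $\Tper \varepsilon^{-1} \le T_0 \varepsilon^{-1}$ and to stay within the hypotheses of Theorem \ref{thm:valid}; this is harmless. If one prefers to keep $\rho < 1$ strictly, the same argument gives error $C\varepsilon^\rho$ for every $\rho<1$, and one simply records the weaker-looking but sufficient bound $C\varepsilon$ by taking, e.g., $\rho$ close to $1$ — though the cleanest route is to observe that the proof of Theorem \ref{thm:valid} in Section \ref{sec:Dirac} in fact delivers the $O(\varepsilon)$ rate at the single time $t \sim \varepsilon^{-1}$ directly.
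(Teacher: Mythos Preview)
Your proposal is correct and is exactly the argument the paper intends: the corollary is stated immediately after Theorem \ref{thm:valid} with no separate proof, precisely because one simply evaluates \eqref{eq:psi0_validity} at $t=\Tper/\varepsilon$ and unwinds the definitions of $M^\varepsilon$, $M_{\rm Dir}$, and $\tMD$. Your observation about the $\rho=1$ issue is also on point---strictly, Theorem \ref{thm:valid} is stated for $0<\rho<1$, so the clean way to get the $C\varepsilon$ rate (rather than $C\varepsilon^\rho$) is, as you say, to read it off the explicit bound $\varepsilon\|\alpha_0\|_{H^1}+\varepsilon^2 t\|\alpha_0\|_{H^2}+t\varepsilon^3\|\alpha_0\|_{H^3}+t^2\varepsilon^4\|\alpha_0\|_{H^4}$ at the end of Section \ref{sec:Dirac}, which at $t=\Tper/\varepsilon$ is $O(\varepsilon)$.
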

\begin{remark}
In both Theorem \ref{thm:valid} and Corollary \ref{cor:mono_approx}, it is possible to relax the assumption $\alpha_0\in H^4$. We do not pursue this here.
\end{remark}
\subsection{Floquet-multiplier / quasi-energy gap for effective Dirac dynamics}\label{D-gap}

In this section we show that on a subspace of band-limited $L^2(\mathbb{R}^2;\mathbb{C}^2)$  functions,
  the monodromy operator for the effective dynamics, $M_{\rm Dir}$, has a Floquet-multiplier gap on the unit circle $S^1$. 
Our main result,  Theorem \ref{thm:quasigap_general},  shows that this property extends to an {\it effective gap} for the 
 monodromy operator $M^\varepsilon$ associated with the Schr{\"o}dinger evolution \eqref{eq:lsA}.

To facilitate explicit computations, we work with the specific periodic forcing \cite{ablowitz2017tight, Rechtsman-etal:13}
\begin{equation}\label{eq:A_circ}
A(T) = R (\cos (\omega T), \sin(\omega T) ) .
\end{equation}
 First note that $U_{\rm Dir}(t)$, the Dirac flow of \eqref{eq:diracA}, is unitary on $L^2(\mathbb{R}^2)$. Therefore the spectrum of $M_{\rm Dir}$ lies on the unit circle $S^1$.   Furthermore, $\slashed{D}_{A}(T)$ is invariant under arbitrary spatial translations, so we  can solve \eqref{eq:diracA} via the Fourier transform. Let $\hat\alpha_j(T,\bxi) $ denote the Fourier transform on $\mathbb{R}^2$ of $\alpha_j(T,X)$.  Then, for each $\bxi=(\xi_1,\xi_2)\in \mathbb{R}^2$, $\hat\alpha(T,\bxi)$
  satisfies the system of periodic ODEs:
 \[ i\partial_T \hat\alpha(T,\bxi)  = \hat{\slashed{D}}(T;\bxi)\hat\alpha(T,\bxi) ,\quad {\rm where}\]
\begin{align}\label{eq:dirac_xi}
   \hat{\slashed{D}}(T;\bxi) &= v_{\rm D}\left( \left(\xi_1+A_1(T)\right)\sigma_1- \left(\xi_2+A_2(T)\right)\sigma_2
  \right)\nonumber\\ 
   &= \   v_{\rm D} \begin{pmatrix}
         0& \xi_1 + i\xi_2   +  R  e^{i\omega T} \\
         \xi_1 - i\xi_2 + R   e^{-i\omega T} & 0
    \end{pmatrix}
\end{align} 
Let $\hat{U}_{\rm Dir}(T,\bxi):\mathbb{C}^2\to\mathbb{C}^2$  denote
the fundamental matrix for \eqref{eq:dirac_xi} with $\hat{U}_{\rm Dir}(0,\bxi)=I_{2\times2}$. Then, 
\begin{equation}
(U_{\rm Dir}\alpha_0)(X,T)= \frac{1}{(2\pi)^2}\int_{\mathbb{R}^2} e^{i\bxi\cdot X}\hat{U}_{\rm Dir}(T,\bxi)\hat\alpha_0(\bxi) d\bxi
 \label{UDirac}\end{equation}
 and hence the monodromy operator
 \begin{equation}
(M_{\rm Dir}\alpha_0)(X)= \frac{1}{(2\pi)^2}\int_{\mathbb{R}^2} e^{i\bxi\cdot X}\hat{U}_{\rm Dir}(\Tper,\bxi)\hat\alpha_0(\bxi) d\bxi
 \label{MDirac}\end{equation} 
Since ${\rm trace}(\hat{\slashed{D}}(T;\bxi))=0$,  then $\hat{U}_{\rm Dir}(\Tper,\bxi)$ has two eigenvalues (Floquet multipliers), 
 $\lambda_+(\bxi)$ and  $\lambda_-(\bxi)$, which 
 satisfy $\lambda_+(\bxi)\lambda_-(\bxi)=1$ \cite{coddington1955theory}. By unitarity of $\hat{U}_{\rm Dir}(T,\bxi)$, they lie on the unit circle and satisfy: $\lambda_-(\bxi)=\overline{\lambda_+(\bxi)}$. We set 
\begin{equation}\label{eq:floq_mult} \lambda_+(\bxi)=\exp(i\mu(\bxi)\Tper)\quad {\rm and}\quad \lambda_-(\bxi)=\exp(-i\mu(\bxi)\Tper) .
\end{equation}
Since $\mu(\bxi)\Tper$ is defined modulo $2\pi$ we take $\mu(\bxi)\Tper\in[0,2\pi)$.  Choose an orthonormal set of eigenvectors 
$\{v_+(\bxi),v_-(\bxi)\}$ of $\hat{U}_{\rm Dir}(\Tper,\bxi)$:
\begin{equation}  \hat{U}_{\rm Dir}(\Tper,\bxi) v_\pm(\bxi)= \exp(\pm i\mu(\bxi)\Tper) v_{\pm}(\bxi)\ .\label{UDhat-evp}\end{equation}
It follows that 
\begin{equation} \hat{U}_{\rm Dir}(\Tper,\bxi)\hat\alpha_0(\bxi)= \sum_{r=\pm}\left\langle v_r(\bxi),\hat\alpha_0(\bxi)\right\rangle_{\C^2} e^{i r\mu(\bxi)\Tper}v_r(\bxi).
\label{UDhat}
\end{equation}

Since the effective Dirac equation \eqref{eq:diracA}  is spatially translation invariant, the unitary evolution $U_{\rm Dir}(T)$ 
acts invariantly on subspaces of compact Fourier support. For
 \begin{equation}\label{eq:MDd0_def}
 \textrm{
 $M_{{\rm D},d_0}=M_{\rm Dir}$, defined by \eqref{eq:diracA}, acting in the invariant subspace $\chi(|\nabla|\le d_0)L^2(\mathbb{R}^2)$,}
 \end{equation}
 the Hilbert space of $L^2$ functions whose Fourier transform vanishes
   for $|\xi|>d_0$; see Section \ref{sec:bl}. We next prove the following spectral gap result for  $M_{{\rm Dir},d_0}$; See upper panel in Figure \ref{fig:threeSpectra}.

\begin{proposition} \label{prop:Dirac_gap}
There exist constants $d_0>0$ and $\tilde{g}=\tilde{g}(d_0)$, both depending on the forcing~$A(T)$,
 such that  $M_{{\rm Dir},d_0}$ has a spectral gap on the unit circle;
$$ {\rm spec} (M_{{\rm D},d_0}) \cap  \{e^{iy} : |y|\le\tilde{g}\Tper\} =\ \emptyset\ .
$$ 
\end{proposition}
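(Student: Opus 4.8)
The plan is to exploit the fact that, by spatial translation invariance, $M_{{\rm D},d_0}$ is unitarily equivalent (via the Fourier transform) to the direct-integral multiplication operator $\bxi \mapsto \hat U_{\rm Dir}(\Tper,\bxi)$ acting on $L^2(B_{d_0}(0);\C^2)$, so that
\[
\spec(M_{{\rm D},d_0}) \;=\; \overline{\bigcup_{|\bxi|\le d_0} \{\lambda_+(\bxi),\lambda_-(\bxi)\}}
\;=\; \overline{\bigcup_{|\bxi|\le d_0} \{e^{\pm i\mu(\bxi)\Tper}\}},
\]
where $\mu(\bxi)$ is the Floquet exponent introduced in \eqref{eq:floq_mult}. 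Thus it suffices to show that $\mu(\bxi)\Tper$ is bounded away from $0$ (equivalently from $2\pi$) uniformly on a ball $|\bxi|\le d_0$ for a suitably small $d_0$. The natural route is a continuity/perturbation argument anchored at $\bxi=0$: first analyze $\hat U_{\rm Dir}(\Tper,0)$ explicitly, and then perturb.

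First I would compute the monodromy matrix at $\bxi=0$. There $\hat{\slashed{D}}(T;0) = v_{\rm D} R\,(\cos(\omega T)\sigma_1 + \sin(\omega T)\sigma_2)$ — note the sign flip on $\sigma_2$ in \eqref{eq:dirac_xi} combines with $\sin(\omega T)$ — which is, up to the rotating phase $e^{\pm i\omega T}$ in its off-diagonal entries, a ``rotating-field'' two-level Hamiltonian. Passing to the rotating frame $\alpha \mapsto e^{i\omega T\sigma_3/2}\alpha$ (or directly diagonalizing the resulting constant-coefficient system) converts \eqref{eq:dirac_xi} at $\bxi=0$ into an autonomous $2\times2$ system whose monodromy over one period $\Tper = 2\pi/\omega$ can be written in closed form; its eigenvalues are $e^{\pm i\mu(0)\Tper}$ with $\mu(0)\Tper$ an explicit function of $v_{\rm D}R$ and $\omega$. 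The key point is that for generic parameter choices (and certainly for an open set of $R,\omega$) one has $\mu(0)\Tper \notin \{0, 2\pi\}$, i.e. $\lambda_\pm(0)\ne 1$; this is precisely the ``gap at the Dirac point'' opened by the circular forcing, the continuum analogue of the Floquet gap in the tight-binding models \cite{ablowitz2017tight, Rechtsman-etal:13}. Set $2\tilde g_0 \Tper := \operatorname{dist}(\{\lambda_\pm(0)\}, 1) > 0$ on $S^1$.

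Next I would upgrade this to a neighborhood of $\bxi=0$ by continuity. The map $\bxi \mapsto \hat U_{\rm Dir}(\Tper,\bxi)$ is continuous (indeed real-analytic) in the $2\times2$ matrix norm, since it is obtained by solving the linear ODE \eqref{eq:dirac_xi} whose coefficients depend continuously on $\bxi$ — a Gronwall estimate on $\|\hat U_{\rm Dir}(T,\bxi) - \hat U_{\rm Dir}(T,\bxi')\|$ over $T\in[0,\Tper]$ gives this quantitatively. Since eigenvalues of a matrix depend continuously on its entries, there is $d_0>0$ such that $|\bxi|\le d_0$ implies $\operatorname{dist}(\{\lambda_\pm(\bxi)\},1) \ge \tilde g_0\Tper$, i.e. $\spec(M_{{\rm D},d_0}) \cap \{e^{iy}:|y|\le \tilde g \Tper\} = \emptyset$ with $\tilde g := \tilde g_0$. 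Because the ball $\overline{B_{d_0}(0)}$ is compact, the closure of the union over $|\bxi|\le d_0$ of the eigenvalue pairs does not reach $1$, so the gap survives the closure.

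The main obstacle — and the only genuinely non-routine step — is the closed-form computation of $\mu(0)\Tper$ and the verification that it avoids $0$ and $2\pi$: one must either track the accumulated dynamical phase and geometric (Berry-type) phase through the rotating-frame transformation carefully, or simply diagonalize the resulting constant $2\times2$ matrix and read off $\operatorname{tr}\hat U_{\rm Dir}(\Tper,0) = 2\cos(\mu(0)\Tper)$, checking this is not $\pm 2$; the rest is standard continuity of solutions of linear ODEs with respect to parameters and continuity of the spectrum. One should also record the (harmless) dependence: $d_0$ and $\tilde g$ depend on $v_{\rm D}$, $R$, and $\omega$, i.e. on the forcing $A(T)$, exactly as stated.
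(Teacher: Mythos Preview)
Your proposal is correct and follows essentially the same route as the paper: explicit computation of $\hat U_{\rm Dir}(\Tper,0)$ via the rotating-frame substitution (the paper sets $a_1=\hat\alpha_1 e^{-i\omega T/2}$, $a_2=\hat\alpha_2 e^{i\omega T/2}$, which is exactly your $e^{i\omega T\sigma_3/2}$), reading off $\mu_\pm(0)=\pm\tfrac12(\sqrt{\omega^2+4R^2v_{\rm D}^2}-\omega)\neq0$, and then a continuity argument in $\bxi$ to get a uniform bound on $|\bxi|\le d_0$ followed by a resolvent estimate. The only minor difference is that you hedge with ``for generic parameter choices'' whereas the paper's explicit formula shows $\mu_+(0)>0$ for every $R>0$; conversely, you are right to flag that one should also check $\mu(0)\Tper\notin 2\pi\mathbb Z$, a point the paper passes over.
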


\begin{remark}\label{rem:BL}  
The band-limiting condition, $d_0>0$, of  Proposition \ref{prop:Dirac_gap} is necessary to ensure a gap in the spectrum of $M_{\rm Dir}$. Indeed, in Section \ref{sec:wkb} we prove, using WKB asymptotics, that the
 monodromy operator of $\hat{\slashed{D}}(T;\bxi)$ is well-approximated by $ \exp\left(i\left(v_{\rm D}( \sigma_1 \xi_1 -\sigma_2 \xi_2 \right)\Tper\right)$, an effective operator, whose eigenvalues $\exp(\pm i v_{\rm D}|\bxi|\Tper)$ and  that  
   \begin{equation}\label{eq:Diracspec_infty}  \bigcup\limits_{|\bxi|<d_0}\{\exp\left(\pm i\mu(\bxi)\Tper\right)\}\xrightarrow[]{d_0\to \infty} S^1 \, .
\end{equation}
It follows that ${\rm spec}(M_{\rm Dir})=S^1$; there are no gaps in ${\rm spec}(M_{\rm Dir})$.  That  \eqref{eq:Diracspec_infty} occurs, has previously noted in numerical simulations, see e.g., \cite{gu2011floquet,ibarra2019dynamical, kunold2020floquet}.

From a modeling perspective, the regime  of arbitrarily high momenta $\xi$ in $\alpha_0$ is outside the regime of validity of the effective  Dirac equation \eqref{eq:diracA}, which models the dynamics of {\em slowly}-varying envelope modulations of Bloch modes; see \eqref{eq:slow_wp}. 
\end{remark} 

 \begin{remark} Our main result, Theorem \ref{thm:quasigap_general}, applies to {\em all} periodic forcing functions, $A(T)$, for which $M_{\rm Dirac, d_0}$ has a spectral gap for some $d_0>0$. 
The particular choice \eqref{eq:A_circ} enables very explicit calculations.  By continuity arguments, small (time-periodic) perturbations of \eqref{eq:A_circ} will also have such a spectral gap. 
 \end{remark}

\begin{proof}[Proof of Proposition \ref{prop:Dirac_gap}]
The proof has two steps. We first show that for $\xi=0$, we have $\mu(0)\neq 0$ and therefore 
 the Floquet exponents are distinct. Then, by continuity,  for small $|\xi|$,  $|\mu(\xi)|$ is bounded away from zero. 

Consider first $\bxi=0$. Then \eqref{eq:dirac_xi} reduces to
\begin{equation}\label{eq:dirac_xi0}
     i\partial_T \hat\alpha(0,T) = \left( \begin{array}{ll}
         0&   v_{\rm D} R\ e^{i\omega T} \\
         v_{\rm D} R\ e^{-i \omega T} & 0
    \end{array} \right)\hat\alpha(0,T) =  \hat{\slashed{D}}(0,T)\hat\alpha(0,T) \ .
    \end{equation}
Defining
 \begin{equation}\label{eq:tildeadef}
a _1(T) \equiv \hat\alpha_1(T;0)e^{\frac{-i\omega}{2}T} \, , \qquad a _2(T) \equiv \hat\alpha_2(T;0)e^{\frac{i\omega}{2}T} \, ,
\end{equation}
we obtain the constant coefficient system
\begin{equation}\label{eq:tildea_ode}
i\partial_T
a (T) = \left(\begin{array}{ll}
\frac{\omega}{2} & R v_{\rm D} \\
R v_{\rm D} & -\frac{\omega}{2}
\end{array} \right) a(T)  \equiv L\  a(T)  \, .
\end{equation}
Denote by $z_\pm(0)$ and $v_\pm(0)$ the eigenvalues and corresponding eigenvectors of $L$. One verifies:
\begin{align*}
z_{\pm}(0) &=\pm \frac12\sqrt{\omega^2 + 4R^2 v_{\rm D}^2} 
\end{align*}
Let $V$ denote the matrix whose columns are $v_+(0)$ and $v_-(0)$. Then, 
 \begin{equation*}
 \hat{U}_{\rm Dir}(T,0)  = \begin{pmatrix} e^{\frac{i}{2}\omega T} & 0 \\ 0 & e^{-\frac{i}{2}\omega T} \end{pmatrix}
  e^{-iLT}, \quad \textrm{where}\quad   e^{-iL T} = V \begin{pmatrix} e^{-iz_+(0)T} & 0 \\ 0 &  e^{-iz_-(0) T} \end{pmatrix} V^{-1}\
\end{equation*}
For $\bxi=0$, we then have Floquet solutions

\begin{align*}
v_+(0) e^{-iz_+(0)T} e^{\frac{i}{2} \omega T} &= v_+(0) e^{-i(z_+(0) - \frac{\omega}{2}) T}\times e^{i\omega T}\\
   v_- (0) e^{-iz_-(0)T} e^{-\frac{i}{2} \omega T} &= v_-(0) e^{i(z_+(0) - \frac{ \omega }{2})T}\times e^{-i\omega T} \, ,
  \end{align*}
  where we used the relation $z_+(0) = -z_-(0)$.  Therefore, we have  $\hat{U}_{\rm Dir}(\Tper,0)v_\pm(0) = \lambda_\pm(0)v_\pm$
where $\lambda_\pm(0)\equiv e^{\pm i \mu(0)}$ are the distinct (complex conjugate) Floquet multipliers, with corresponding Floquet exponents

    \begin{equation} \mu_{\pm}(0)= \pm \left(z_+(0) -\frac{\omega}{2}\right) =\pm\frac12 \left( \sqrt{\omega^2 +4\rho ^2 v_{\rm D}^2} - \omega \right) \, .
    \label{mult0}\end{equation}
  By continuity, for $|\bxi|\le d_0$, where $d_0$ is chosen sufficiently small, $|\mu_\pm(\bxi)|\ge c_0$, where $c_0>0$ depends on $d_0$.
  It follows that there exists $C_1>0$, depending on $c_0$ and $d_0$, such that 
  \[ \sup_{|\bxi|\le d_0}\ \Big\| \left( \hat{U}_{\rm Dir}(\Tper,\bxi) - \lambda I_{2\times2} \right)^{-1} \Big\|_{\mathbb{C}^2\times\mathbb{C}^2}\le C_1\]
  for all $\lambda$ varying in an open arc on the unit circle, which contains $1$. 
  Finally, since
  \[
(M_{{\rm Dir},d_0}-\lambda I_{2\times2})^{-1}\alpha_0 = \frac{1}{(2\pi)^2}\int_{|\bxi|\le d_0} e^{i\bxi\cdot X}\left( \hat{U}_{\rm Dir}(\Tper,\bxi) - \lambda I_{2\times2} \right)^{-1} \hat\alpha_0(\xi) d\bxi
\, , \]
 it follows that this open arc is in the resolvent set of $M_{{\rm Dir},d_0}$.

\end{proof}
\section{Band-limited Dirac wave-packets}\label{sec:bl}

In Theorem \ref{thm:valid} we proved that the evolution  of Dirac wave-packets, $\psi^\varepsilon=\varepsilon\alpha_0(\varepsilon \bx)^\top\Phi(\bx)$ under the Schr{\"o}dinger  equation, \eqref{eq:lsA},  
 is given by an effective magnetic Dirac equation. In Proposition \ref{prop:Dirac_gap} we showed that the effective 
 Dirac dynamics, when restricted to a suitable invariant subspace of band-limited functions, $\chi(|\nabla|\le d_0)L^2(\R^2)$, 
    has a Floquet-multiplier (and therefore) quasi-energy gap.   This motivates the following:

 \begin{definition}[Band-limited Dirac wave-packets]\label{BLDpkt}
Fix $\varepsilon, d_0>0$ and let  $\bk _D\in\{\bK,\bK^\prime\}$. 
We say that {\rev $\psi\in {\rm BL}(d_0,\varepsilon)={\rm BL}(d_0, \varepsilon; k_D)\subset~ L^2(\mathbb{R}^2)$,} the subspace of
  {\it band-limited Dirac wave packets} with parameters $d_0$ and $\varepsilon$,
 if there exists $\alpha_\varepsilon\in L^2(\mathbb{R})$ with $ {\rm supp}\left(\hat\alpha_\varepsilon\right)\subseteq \{\bxi: |\bxi|\le d_0\} $ such that
   \[ \psi(\bx)= \alpha_\varepsilon(\varepsilon \bx)^{\top} {\Phi}(\bx;\bk _D).\]
Here, $\hat{\alpha}=\mathcal{F}(\alpha)$ is the Fourier transform of $\alpha$.
\footnote{Elsewhere in this article we use the scaling $\psi(\bx)= \varepsilon\alpha(\varepsilon \bx)^{\top} {\Phi}(\bx;\bk _D)$ to guarantee that $\|\psi\|_2 \approx \|\alpha\|_2$. Such $\psi$ satisfy the requirements  of ${\rm BL}(d_0,\varepsilon)$, with $\alpha$ replaced by $\varepsilon\alpha$.}  
 \end{definition}

 Band-limited wave-packets are a good physical model of the types of excitations considered  by  physicists, when exploring 
 phenomena related to  Dirac points of the unperturbed structure. A mathematically more intrinsic notion would be 
states in $L^2(\R^2)$, comprised only of Floquet-Bloch modes of $H^0 = -\Delta +V$ with energies within $\varepsilon$ distance from $E_D$. The next result 
shows that these states are Dirac wave-packets, up to a high-order correction. It is convenient to require the following property of the bulk  (unforced) Hamiltonian, $H^0$; see, for example, \cite{FLW-2d_edge:16,drouot2020edge} where a no-fold condition plays a role in the construction of edge states.
\begin{definition}[No-fold condition]\label{def:nofold}
We say that the Hamiltonian $H^0$ satisfies the no-fold condition if there exist constants $\delta_0, \delta_1 >0$, such that for all $0<\delta<\delta_1$ the following holds: if $\bk\in \mathcal{B}$ is such $|\bk - \bk_{\rm D}|>\delta$ {\rev for both $\bk_{\rm D}\in\{\bK, \bK^\prime\}$}, then ${\rm Proj}_{L^2_{\bk}}(|H^0 - E_D|<\delta_0)=0 \, .$
\end{definition}
The no-fold condition asserts that the bands which touch conically at a $(E_D,\bk_{\rm D})$ (and therefore all other high symmetry quasi-momenta at energy $E_D$) do not attain the energy $E_D$, outside a sufficiently small neighborhood of the high-symmetry quasi-momenta. Physically, this means that the bulk structure is semi-metallic at $(E_D,\bk_{\rm D})$. Although, in general, the no-fold condition may not hold, it has been proved to hold for graphene-like potentials in the strong binding regime; see~\cite{FLW-CPAM:17}. Furthermore, in applications bulk structures can be engineered to satisfy this condition. For an example, see \cite{GRW:21}.

 \begin{proposition}\label{prop:wp_dk}  Suppose the bulk Hamiltonian, $H^0$, satisfies the no-fold condition.
 \footnote{If $H^0$ does not satisfy the no-fold condition, then the conclusion
  of Proposition \ref{prop:wp_dk}  holds with\\ ${\rm Proj}_{L^2(\mathbb{R}^2)}(|H^0-E_D|<\varepsilon)$
   replaced by  $\int^\oplus_{|\bk-\bk_{\rm D}|<\delta}{\rm Proj}_{L^2_\bk}(|H^0-E_D|<\varepsilon)$
  with $\delta$ sufficiently small. }
   There exists $\varepsilon_0>0$ such that for all $0<\varepsilon<\varepsilon_0$, the following holds: for every $f\in L^2(\mathbb{R}^2)$ there are band-limited Dirac {\rev wave-packets, $u_\varepsilon^{\bK}[f]\in {\rm BL}(d_0,\varepsilon;\bK)$ and $u_\varepsilon^{ \bK^\prime}[f]\in {\rm BL}(d_0,\varepsilon;\bK^\prime)$,} such that
\begin{equation}
 {\rm Proj}_{L^2(\mathbb{R}^2)}(|H^0-E_D|<\varepsilon) f =  {\rev u_\varepsilon^{\bK}[f] +u_\varepsilon^{\bK^\prime}[f]  }+ \mathcal{O}\left(\varepsilon^3\|f\|_{L^2(\mathbb{R}^2)}\right) \, .
 \label{wp_dk1}
 \end{equation}
 Conversely, let  {\rev $u\in {\rm BL}(d_0,\varepsilon;\bk _{\rm D})$, where  $\bk_{\rm D}\in\{\bK,\bK^\prime\}$.} Then, 
\begin{equation}
\left\| {\rm Proj}_{L^2(\mathbb{R}^2)}(|H^0-E_D|>\varepsilon) u\right\|_{L^2(\mathbb{R}^2)} \le C\varepsilon^3 \|u\|_{L^2(\mathbb{R}^2)}  . 
\label{wp_dk2}
 \end{equation}
 \end{proposition}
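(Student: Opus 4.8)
The plan is to work throughout in the Floquet--Bloch representation and to exploit the conical structure at the Dirac point together with the no-fold condition (Definition \ref{def:nofold}). Write ${\rm Proj}_{L^2(\mathbb{R}^2)}(|H^0-E_D|<\varepsilon)=\int^\oplus_{\mathcal{B}}P_{<\varepsilon}(\bk)\,d\bk$, where $P_{<\varepsilon}(\bk)$ is the spectral projection of $H^0_\bk$ onto $\{E:|E-E_D|<\varepsilon\}$. The first step is localization: once $\varepsilon<\delta_0$, the no-fold condition forces $P_{<\varepsilon}(\bk)=0$ unless $\bk$ lies within $\delta$ of $\bK$ or $\bKp$; and since the degenerate $L^2_\bK$-eigenvalue $E_D$ is isolated, for $\bk$ near $\bK$ only the two bands touching conically at $(\bK,E_D)$ can enter a neighborhood of $E_D$, with $E_\pm(\bK+\bkappa)=E_D\pm v_{\rm D}|\bkappa|(1+o(|\bkappa|))$ by \eqref{D-cone}. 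Hence, writing $\bkappa=\bk-\bK$, one has $P_{<\varepsilon}(\bK+\bkappa)=0$ for $|\bkappa|\gtrsim\varepsilon$ and $P_{<\varepsilon}(\bK+\bkappa)=P(\bK+\bkappa)$, the rank-two spectral projector onto the Dirac bands, for $|\bkappa|\lesssim\varepsilon$, with a transition annulus of width $O(\varepsilon^2)$ between the two regimes (using that the conical correction is $O(|\bkappa|^2)$). This yields the splitting of ${\rm Proj}_{L^2(\mathbb{R}^2)}(|H^0-E_D|<\varepsilon)f$ into its part fibered over $|\bk-\bK|\lesssim\varepsilon$ and over $|\bk-\bKp|\lesssim\varepsilon$, from which $u_\varepsilon^{\bK}[f]$ and $u_\varepsilon^{\bKp}[f]$ will be manufactured.

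The second step is the local structure of $P(\bK+\bkappa)$. Conjugating the fiber at $\bK+\bkappa$ by $e^{-i\bkappa\cdot\bx}$ identifies it with a projection $\tilde P(\bkappa):=e^{-i\bkappa\cdot\bx}P(\bK+\bkappa)e^{i\bkappa\cdot\bx}$ on $L^2_\bK$; since the two Dirac bands stay uniformly gap-separated from the rest of $\spec(H^0_{\bK+\bkappa})$ for $|\bkappa|$ small, $\tilde P(\bkappa)$ is real-analytic in $\bkappa$ near $0$ with $\tilde P(0)=P_0$, the orthogonal projection onto $N:={\rm span}\{\Phi_1,\Phi_2\}$. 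A Lyapunov--Schmidt / Feshbach reduction — the same one used to derive the effective Dirac operator, cf.\ Section \ref{sec:Dirac} and \cite{FW:14}, and resting on the inner-product identities of Proposition \ref{ipPhi} and the symmetries \eqref{tbt} — shows that any $g\in{\rm range}(P(\bK+\bkappa))$, written $g=e^{i\bkappa\cdot\bx}(\phi_\parallel+\phi_\perp)$ with $\phi_\parallel\in N$ and $\phi_\perp\perp N$ in $L^2_\bK$, obeys $\|\phi_\perp\|_{L^2_\bK}\le C|\bkappa|\,\|\phi_\parallel\|_{L^2_\bK}$, with the leading correction given by $(H^0_\bK-E_D)^{-1}$ restricted to ${\rm range}(I-P_0)$. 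Dually, a function $e^{i\bkappa\cdot\bx}\phi$ with $\phi\in N$ has its projection onto the ``far'' bands bounded by $\|(I-\tilde P(\bkappa))\phi\|_{L^2_\bK}\le C|\bkappa|\,\|\phi\|_{L^2_\bK}$.

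With these ingredients I would \emph{define} $u_\varepsilon^{\bK}[f]$ by its Floquet--Bloch fibers: over $|\bkappa|\le\varepsilon d_0$ take $e^{i\bkappa\cdot\bx}\phi_{\parallel,\bkappa}$, where $\phi_{\parallel,\bkappa}$ is the $N$-component of the $\bK$-localized part of ${\rm Proj}_{L^2(\mathbb{R}^2)}(|H^0-E_D|<\varepsilon)f$ (extended by zero past its support), and similarly at $\bKp$. Because the $N$-components are taken in the $\bkappa$-independent frame $\{\Phi_1(\cdot),\Phi_2(\cdot)\}$, an inverse Floquet--Bloch computation gives $u_\varepsilon^{\bK}[f](\bx)=\alpha_\varepsilon(\varepsilon\bx)^\top\Phi(\bx;\bK)$ with ${\rm supp}(\widehat{\alpha_\varepsilon})\subseteq\{|\bxi|\le d_0\}$, i.e.\ $u_\varepsilon^{\bK}[f]\in{\rm BL}(d_0,\varepsilon;\bK)$, once $d_0$ is chosen compatibly with the cone slope $v_{\rm D}$ so that the band-limit covers (for \eqref{wp_dk1}), resp.\ is covered by (for \eqref{wp_dk2}), the $\varepsilon$-energy window — this is the $d_0$ fixed in Proposition \ref{prop:Dirac_gap}. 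The error ${\rm Proj}_{L^2(\mathbb{R}^2)}(|H^0-E_D|<\varepsilon)f-u_\varepsilon^{\bK}[f]-u_\varepsilon^{\bKp}[f]$ then has fibers $e^{i\bkappa\cdot\bx}\phi_{\perp,\bkappa}$ plus the transition-annulus contribution, whose $L^2(\mathbb{R}^2)$ norm one estimates by integrating the fiberwise bounds over the Bloch support of area $O(\varepsilon^2)$, giving \eqref{wp_dk1}. For the converse, the choice of $d_0$ puts all Dirac-band energies of $u\in{\rm BL}(d_0,\varepsilon;\bk_{\rm D})$ inside the $\varepsilon$-window, so ${\rm Proj}_{L^2(\mathbb{R}^2)}(|H^0-E_D|>\varepsilon)u$ collects only the per-fiber leakage onto the far bands, controlled by the dual bound above; integrating over $|\bkappa|\le\varepsilon d_0$ against $\int\|\widehat u_{\bK+\bkappa}\|^2\,d\bkappa=\|u\|_{L^2}^2$ gives \eqref{wp_dk2}.

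The main obstacle is extracting the \emph{sharp} power of $\varepsilon$: carrying out the scheme above with only the leading-order reduction yields an error of size $O(\varepsilon\,\|f\|_{L^2})$, and improving this to $O(\varepsilon^3\|f\|_{L^2})$ is the delicate point. I expect this to require pushing the Feshbach reduction past first order and using the full strength of the honeycomb symmetries — Proposition \ref{ipPhi}, the $2\pi/3$-rotational invariance, and the $\mathcal{P}\circ\mathcal{C}$-symmetry encoded in \eqref{tbt} — to detect additional vanishing of the $I-P_0$ corrections and of the inter-band couplings at the Dirac point, combined with a careful accounting of the three relevant scales (the $\varepsilon$-energy window, the band-limit $d_0$, and the $O(\varepsilon^2)$ area of the Bloch support of the spectral subspace) so that in particular the transition annulus contributes at the correct order. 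A subsidiary technical point is to verify that $f\mapsto u_\varepsilon^{\bK}[f]$ is linear and bounded uniformly in $\varepsilon$, which follows from the uniform-in-$\bkappa$ boundedness of $\tilde P(\bkappa)$ and of the reduction map.
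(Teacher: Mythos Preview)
Your overall strategy matches the paper's: localize in quasi-momentum via the no-fold condition, replace the fiber projector $P(\bK+\bkappa)$ by its $\bkappa=0$ value $P_0$ (the projector onto ${\rm span}\{\Phi_1,\Phi_2\}$), and read off the band-limited structure from an inverse Floquet--Bloch computation. The paper packages the zeroth-order replacement as a first-order Taylor expansion, in $\bkappa$, of the Riesz contour integral $\frac{1}{2\pi i}\oint_{|\zeta-E_D|=2\varepsilon}(\zeta I-H(\bK+\bkappa))^{-1}\,d\zeta$ rather than as a Feshbach reduction, but these are equivalent, and its explicit formula $u_\varepsilon^{\bK}[f](\bx)=\Phi^\top(\bx;\bK)\,\bigl[(\overline{\Phi(\cdot;\bK)}f)\ast\mathcal{F}_\bxi^{-1}\chi(|\bxi|/\varepsilon<a)\bigr](\bx)$ is precisely your $N$-component construction.

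Where you diverge from the paper is your final paragraph. The paper does \emph{not} invoke any higher-order reduction, symmetry cancellations from Proposition~\ref{ipPhi} or \eqref{tbt}, or refined transition-annulus accounting to reach $O(\varepsilon^3)$. It keeps only the first-order Taylor remainder $\int_{|\bkappa|<\varepsilon a}\bkappa\cdot{\rm Error}[f;\bkappa]\,d\bkappa$ and asserts that this is $O(\varepsilon^3\|f\|_{L^2})$ ``since the domain of integration is over a disc of radius~$\varepsilon$'' --- one power from $|\bkappa|$, two from the area. Your instinct that the direct-integral Parseval norm naturally gives only $\bigl(\int_{|\bkappa|<\varepsilon a}|\bkappa|^2\|f_{\bK+\bkappa}\|_{L^2(\Omega)}^2\,d\bkappa\bigr)^{1/2}\lesssim\varepsilon\|f\|_{L^2}$ is worth weighing carefully against the paper's area-of-disc reasoning; in any case, the route the paper offers to the stated power is the elementary one, not the elaborate mechanism you anticipate.

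For the converse \eqref{wp_dk2} the paper takes a slicker route than your dual fiber bound: it applies \eqref{wp_dk1} to $u\in{\rm BL}(d_0,\varepsilon;\bK)$ itself, and then checks by a lattice Fourier computation (expanding $\overline{\phi_j(\cdot;\bk_{\rm D})}\phi_\ell(\cdot;\bK)$ in $\Lambda^*$-Fourier series, and using that $\widehat{\alpha_\varepsilon}$ is supported in $\{|\bxi|<\varepsilon a\}$ together with $\bK-\bKp\notin\Lambda^*$) that the resulting wave-packets satisfy $\gamma_{\varepsilon,\bK}=\alpha_\varepsilon$ and $\gamma_{\varepsilon,\bKp}=0$ \emph{exactly}. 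Hence ${\rm Proj}(|H^0-E_D|<\varepsilon)u=u+O(\varepsilon^3\|u\|)$, which is equivalent to \eqref{wp_dk2}. This bypasses entirely the need for a separate leakage estimate onto the far bands.
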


In order to transfer the spectral gap information for the effective evolution (Section \ref{D-gap}) to the Schr{\"o}dinger evolution, \eqref{eq:lsA},  for which ${\rm BL}(d_0,\varepsilon)$ is not invariant, we introduce a decomposition of $L^2(\R^2)$ into
     a direct sum of ${\rm BL}(d_0,\varepsilon)$ and its orthogonal complement.

\begin{proposition}\label{BLDpkt-closed} 
For  any fixed $\varepsilon>0$ which is sufficiently small, 
$L^2(\mathbb{R}^2)$ has the orthogonal decomposition
\[ L^2(\mathbb{R}^2) = {\rm BL}(d_0,\varepsilon) \oplus {\rm BL}(d_0,\varepsilon)^\perp.\]
\end{proposition}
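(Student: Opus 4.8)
The plan is to show that $\mathrm{BL}(d_0,\varepsilon)$ is a \emph{closed} subspace of $L^2(\mathbb{R}^2)$; once this is established, the orthogonal decomposition $L^2(\mathbb{R}^2) = \mathrm{BL}(d_0,\varepsilon) \oplus \mathrm{BL}(d_0,\varepsilon)^\perp$ is immediate from the standard Hilbert space projection theorem applied to a closed subspace. So the entire content of the proposition is the closedness of $\mathrm{BL}(d_0,\varepsilon)$.

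First I would set up the natural candidate for an explicit description of the subspace. Recall from Definition \ref{BLDpkt} that $\psi\in\mathrm{BL}(d_0,\varepsilon)$ iff $\psi(\bx) = \alpha_\varepsilon(\varepsilon\bx)^\top\Phi(\bx;\bk_D)$ for some $\alpha_\varepsilon\in L^2(\mathbb{R}^2;\mathbb{C}^2)$ with $\mathrm{supp}(\widehat{\alpha_\varepsilon})\subseteq\{|\bxi|\le d_0\}$. I would argue that the map $\alpha_\varepsilon \mapsto \alpha_\varepsilon(\varepsilon\,\cdot)^\top\Phi(\cdot;\bk_D)$ is (a constant multiple of) an isometry from $\chi(|\nabla|\le d_0)L^2(\mathbb{R}^2;\mathbb{C}^2)$ into $L^2(\mathbb{R}^2)$, or at least bounded above and below. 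The key estimate here is a quasi-orthogonality / near-orthonormality statement: for $\alpha$ slowly varying (Fourier support in a fixed ball, oscillating on scale $\varepsilon^{-1}$) and $\Phi_1,\Phi_2$ the $\Lambda$-periodic Bloch modes at the Dirac point, one has
\begin{equation}
 \big\| \textstyle\sum_{j=1,2}\alpha_j(\varepsilon\bx)\Phi_j(\bx)\big\|_{L^2(\mathbb{R}^2)}^2 = |\Omega|^{-1}\big\langle \langle \Phi_i,\Phi_j\rangle_{L^2(\Omega)}\big\rangle_{ij}\,\varepsilon^{-2} \|\alpha\|_{L^2}^2 \,(1 + o(1)) \nonumber
\end{equation}
as $\varepsilon\to0$, with the leading Gram matrix $\big(\langle\Phi_i,\Phi_j\rangle_{L^2(\Omega)}\big)_{ij}$ being positive definite since $\{\Phi_1,\Phi_2\}$ is a basis of the degenerate eigenspace. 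Because $\varepsilon$ is fixed, I only need that this quadratic form is comparable to $\|\alpha\|_{L^2}^2$ — i.e. bounded above and below by positive constants times $\|\alpha\|_{L^2}^2$ — which follows for $\varepsilon$ sufficiently small (the statement already carries that hypothesis). This kind of estimate is exactly the content that underlies Proposition \ref{prop:wp_dk} and the scaling remark in Definition \ref{BLDpkt}, so I would either cite it or reprove the two-sided bound directly by a Bloch/Poisson-summation argument.

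Given the two-sided bound, closedness follows: if $\psi_n = \alpha_n(\varepsilon\,\cdot)^\top\Phi \to \psi$ in $L^2(\mathbb{R}^2)$, then $(\psi_n)$ is Cauchy, so by the lower bound $(\alpha_n)$ is Cauchy in $\chi(|\nabla|\le d_0)L^2(\mathbb{R}^2;\mathbb{C}^2)$, which is itself closed (it is the range of the orthogonal spectral projection $\chi(|\nabla|\le d_0)$, equivalently $L^2$ functions with Fourier support in the closed ball $\overline{B_{d_0}(0)}$, a closed set). Hence $\alpha_n\to\alpha_\infty$ with $\mathrm{supp}(\widehat{\alpha_\infty})\subseteq\overline{B_{d_0}(0)}$, and by the upper bound $\alpha_\infty(\varepsilon\,\cdot)^\top\Phi = \lim \psi_n = \psi$, so $\psi\in\mathrm{BL}(d_0,\varepsilon)$. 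Then apply the projection theorem.

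I expect the main obstacle to be making the two-sided norm equivalence $\|\alpha(\varepsilon\,\cdot)^\top\Phi\|_{L^2} \asymp \varepsilon^{-1}\|\alpha\|_{L^2}$ rigorous and $\varepsilon$-uniform enough — in particular verifying the lower bound, which is where one needs that the two periodic factors $\Phi_1,\Phi_2$ cannot conspire to cancel the slowly varying envelope. The cleanest route is to expand $\alpha$ and $\Phi_i$ in Fourier/Bloch series and use that the envelope's Fourier support (radius $\varepsilon d_0$ after rescaling) is small compared to the reciprocal lattice spacing, so that cross terms between distinct reciprocal-lattice harmonics of $\Phi_i\overline{\Phi_j}$ and the envelope integrate to a genuinely lower-order contribution; the leading term is controlled by the positive-definite Gram matrix of $\{\Phi_1,\Phi_2\}$. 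Everything else — closedness of a Fourier-support-constrained subspace, the abstract orthogonal decomposition — is routine.
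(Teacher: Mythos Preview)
Your proposal is correct and follows essentially the same route as the paper: reduce to closedness of $\mathrm{BL}(d_0,\varepsilon)$, prove a norm comparison between $\|\alpha(\varepsilon\cdot)^\top\Phi\|_{L^2}$ and $\|\alpha\|_{L^2}$ via a Fourier/Poisson-summation argument exploiting the band-limited envelope and the orthonormality of $\{\Phi_1,\Phi_2\}$, and then run the Cauchy-sequence argument together with closedness of $\chi(|\nabla|\le d_0)L^2$. One sharpening: the paper isolates this as an Averaging Lemma (Lemma~\ref{lem:homog}) and observes that for $\varepsilon$ below a threshold depending only on $d_0$ and $\Lambda^*$, the Poisson sum collapses to the single $\bn=0$ term (all other dual-lattice points lie outside the rescaled Fourier support), so one gets the \emph{exact} identity $\|\alpha(\varepsilon\cdot)^\top\Phi\|_{L^2(\mathbb{R}^2)}^2 = \varepsilon^{-2}\|\alpha\|_{L^2(\mathbb{R}^2;\mathbb{C}^2)}^2$ rather than the $(1+o(1))$ you anticipate --- your ``genuinely lower-order contribution'' is in fact identically zero.
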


\begin{proof}[Proof of Proposition \ref{BLDpkt-closed}]\ 
It suffices to prove that ${\rm BL}(d_0,\varepsilon) $ is a closed subspace of $L^2(\mathbb{R}^2)$. Let ${\alpha}_n^{\top}(\varepsilon \cdot){\Phi} \to u$ in $ L^2(\mathbb{R}^2)$.
We prove that  $\{{\alpha}_n\}$ is a Cauchy sequence in $L^2(\mathbb{R}^2;\mathbb{C}^2)$.  This is a consequence of the following averaging lemma, which we prove in Section \ref{sec:pf_homog}.

\begin{lemma}[Averaging Lemma]\label{lem:homog}
Let $q \in L^2(\mathbb{R}^2)$ such that ${\rm supp}\, \hat{q}(\xi)\subseteq \{\bxi: |\bxi|\le d\}$ for some $d>0 $, and let $p\in L^2(\Omega)$ be $\Lambda$-periodic. Then, there exists $\varepsilon_0$ which depends $d$, such that for andy fixed  $0<\varepsilon<\varepsilon_0$, 
\begin{equation}\label{eq:homog}
\int\limits_{\mathbb{R}^2} p(\bx)q(\varepsilon \bx) \, d\bx = \varepsilon^{-2} \left( \int\limits_{\Omega} p(\bx)\, d\bx \right) \cdot \left( \int\limits_{\mathbb{R}^2} q(X) \, dX \right) \, . 
\end{equation}
\end{lemma}
%
\noindent
Now apply 
 Lemma \ref{lem:homog} with
 \[ q_{j,l}(X)= \overline{(\alpha_{j,n} - \alpha_{j,m})(X)} (\alpha_{l,n} - \alpha_{l,m})(X)\quad {\rm and}\quad p_{j,l}(\bx)= \overline{\Phi_j(\bx)}\Phi_l(\bx),\qquad j, l = 1,2 \, .\] 
  Since the set $\{\Phi_1,\Phi_2\}$ is orthonormal in $L^2(\Omega)$,  for all $\varepsilon>0$ sufficiently small:
  \begin{align*}
  \int_{\mathbb{R}^2}\big| \left(\alpha_n (\varepsilon \bx) - \alpha_m (\varepsilon \bx)\right)^{\top}\Phi(\bx)\big|^2\ d\bx &= \varepsilon^{-2} \sum\limits_{j,l=1,2} \left(\int\limits_{\Omega} p_{j,l}(\bx) \, d\bx\right) \cdot \left( \int\limits_{\mathbb{R}^2} q_{j,l}(X) \, dX\right) \\
  &= \varepsilon^{-2} \sum\limits_{j,l=1,2} \delta _{j,l} \cdot \left( \int\limits_{\mathbb{R}^2} q_{j,l}(X) \, dX\right) \\
  &=\varepsilon^{-2}\|\alpha_n - \alpha_m \|_2^2 . 
   \end{align*}
Since the left hand side tends to zero as $n$ and $m$ tend to infinity, so does the right hand side. Therefore, 
 $\{\alpha_n\}$ is Cauchy and converges in $L^2(\R^2 ; \C ^2)$ to some $\alpha_\star$, and therefore 
 the sequence $\{ \varepsilon \alpha_n^{\top}(\varepsilon \cdot) \Phi\}$ converges to
  $\varepsilon \alpha_\star^{\top}(\varepsilon \cdot) \Phi$ . 
  Finally, we claim that ${\rm supp}(\hat\alpha_\star)\subset \{|\bxi|\le d_0\}$.  Indeed, since   $\{\alpha_n\}$ converges in $L^2 $,  by the Plancherel identity, so does $\{\hat\alpha_n\}$ and hence
  $\hat\alpha_n(\bxi)$ converges almost everywhere, {\rev up to a subsequence.} Furthermore, for all $n$ we have $\hat\alpha_n(\bxi)\equiv0$ for $|\xi|>d_0$, so  we conclude $\hat\alpha_\star(\bxi)\equiv0$ for $|\bxi|>d_0$. This concludes the proof of Proposition \ref{BLDpkt-closed}
  \end{proof}

 \section{Main result; effective gap}\label{sec:mainres}

\begin{figure}[h]
\centering
{\includegraphics[scale=1]{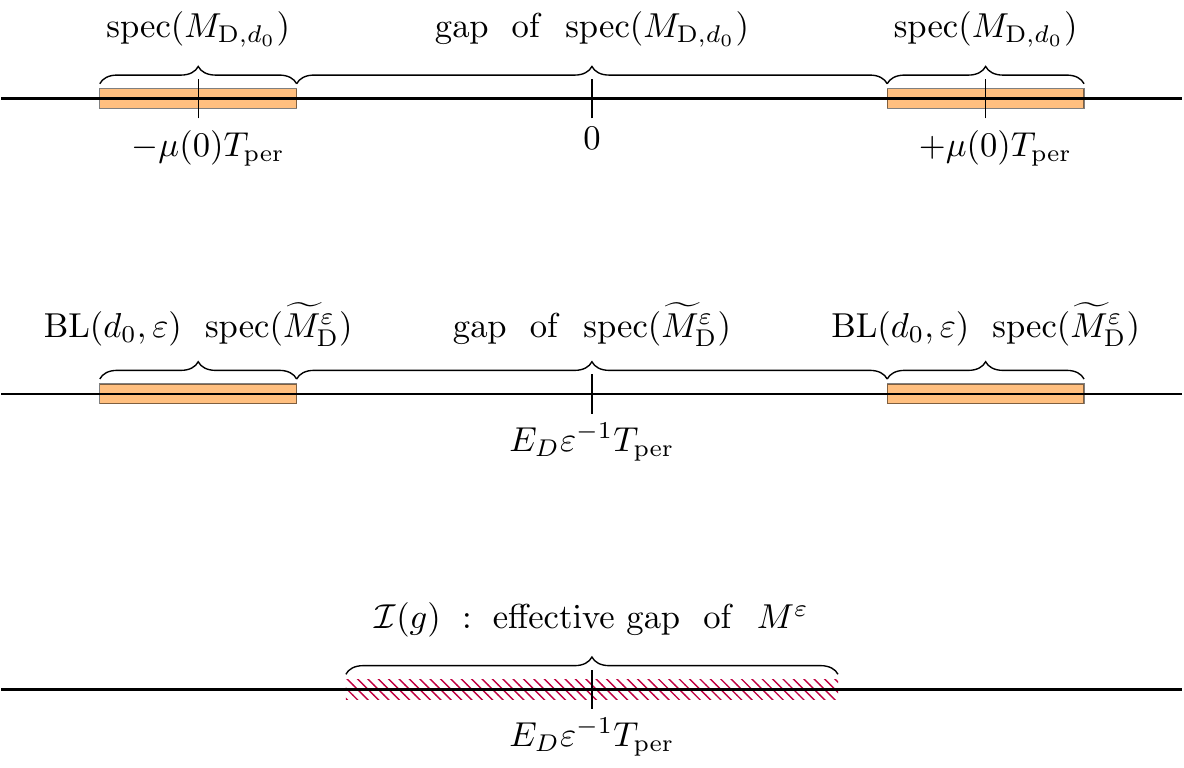}}
\caption{
Main results: Floquet spectra and gaps for the operators $M_{D,d_0}$ (top, see \eqref{eq:MDd0_def}), $M_D^{\varepsilon}$ (middle, see \eqref{tMD-def}), compared to the effective gap of the monodromy $M^{\varepsilon}$ of the Schr{\"o}dinger dynamics \eqref{eq:lsA} (bottom), see more details in Fig.\ \ref{fig:detail}. }
\label{fig:threeSpectra}
\end{figure}

By Proposition \ref{prop:Dirac_gap} the monodromy operator, $M_{{\rm Dir},d_0}$, associated with the effective
 Dirac evolution acting in $\chi(|\nabla|\le d_0)L^2$, has 
 a spectral gap on the unit circle, the arc $\{e^{iy}: |y|<\tilde{g}\Tper\}$. For any $g$ such that $0<g<\tilde{g}$ consider, via Proposition \ref{cor:mono_approx} for  approximating $M^\varepsilon$, the closed arc on the unit circle:
   \begin{equation} 
  \{e^{-i\nu} ~~|~~ \nu \in \mathcal{I} \} \, , \qquad \mathcal{I} = \mathcal{I}(g) \equiv [(\varepsilon^{-1}E_{\rm D} -g)\Tper , (\varepsilon^{-1}E_{\rm D} +g)\Tper ] \, .
\label{sub-arc}\end{equation}
We expect this arc to be filled with spectrum of $M^\varepsilon$; see Remark \ref{rem:BL}. Let $\Pi^\varepsilon$ denote the spectral measure associated with the unitary operator $M^\varepsilon$
\begin{equation}\label{eq:spectral_measure}
 M^\varepsilon = \int_{z\in {\rm spec}(M^\varepsilon)\subset S^1} z\ d\Pi^\varepsilon(z);
 \end{equation} 
see Appendix \ref{ap:spectral} and, for example, \cite{hall2013quantum, taylor2013partial} . The nature of
$\Pi^\varepsilon$ is largely an open problem but it is expected that the arc \eqref{sub-arc} is contained in its support. 
Our main result concerns the nature of any $L^2$ states formed 
via superposition of modes with quasi-energies in $\mathcal{I}$. No assumptions are made on the projection valued measure $z\mapsto\Pi^\varepsilon(z)$.

Let $\mathcal{I}(\tilde{g})$ denote the quasi-energy gap of $M_{{\rm Dir},d_0}$:
 \[ \mathcal{I}(\tilde{g}) \equiv \left(E_D\varepsilon^{-1}\ +\ [ -\tilde{g},\tilde{g} ]\right)\Tper;\quad \textrm{see \eqref{sub-arc}.}\] 

\begin{theorem}\label{thm:quasigap_general}
Consider the parametrically forced Schr{\"o}dinger equation \eqref{eq:lsA} with periodic forcing $A(T)$ given by~\eqref{eq:A_circ}. 
Fix $0<g<\tilde{g}$, so that  $\mathcal{I}(g)\subset \mathcal{I}(\tilde{g})$.
There exist constants $\varepsilon_0$, $C_1$, and $C_2$, such that if $|\varepsilon|<\varepsilon_0$ and $\mathcal{I}^\prime\subset\mathcal{I}(g) $ is an interval such 
that $|\mathcal{I}^\prime|\le C_1(\tilde{g}-g)$,  and if $u\in L^2(\mathbb{R}^2)$ is such that
\[ \Pi^{\varepsilon}(\{e^{-i\nu}  ~|~\nu \in \mathcal{I}'\})u  = u, \quad u\ne0, \]
then
 $$\|({\rm I}-{\rm Proj}_{{\rm BL}(d_0,\varepsilon)})u\|_{L^2(\mathbb{R}^2)} \geq C_2(\tilde{g}-g)\|u\|_{L^2(\mathbb{R}^2)} . $$
\end{theorem}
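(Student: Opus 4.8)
The plan is to argue by contradiction via the triangle inequality, combining three approximations: (i) the monodromy operator $M^\varepsilon$ acting on band-limited Dirac wave-packets is close to the effective Dirac monodromy $\widetilde{M}_{\rm D}^\varepsilon$ (Corollary \ref{cor:mono_approx}); (ii) $M_{{\rm D},d_0}$ has a genuine spectral gap of half-width $\tilde g \Tper$ around $e^{-iE_D\Tper/\varepsilon}$ (Proposition \ref{prop:Dirac_gap}); and (iii) the orthogonal projection ${\rm Proj}_{{\rm BL}(d_0,\varepsilon)}$ intertwines the full and effective dynamics up to the error controlled in step (i). Suppose toward a contradiction that $\|({\rm I} - {\rm Proj}_{{\rm BL}(d_0,\varepsilon)})u\| < C_2(\tilde g - g)\|u\|$ for a suitable small $C_2$. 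Write $u = u_{\rm BL} + u_\perp$ with $u_{\rm BL} = {\rm Proj}_{{\rm BL}(d_0,\varepsilon)}u$ and $\|u_\perp\|$ small. The spectral hypothesis $\Pi^\varepsilon(\{e^{-i\nu} : \nu\in\mathcal{I}'\})u = u$ with $\mathcal{I}'\subset\mathcal{I}(g)$ means that, by the spectral theorem, $M^\varepsilon u$ stays within the arc $\mathcal{I}'$: more precisely, $\|(M^\varepsilon - e^{-i\nu_0})u\| \le |\mathcal{I}'|\,\|u\|$ for any $\nu_0\in\mathcal{I}'$, since $|z - e^{-i\nu_0}| \le |\mathcal{I}'|$ for all $z$ in that arc.

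The heart of the argument is then a resolvent/distance estimate. For any $\nu_0$ in the effective gap $\mathcal{I}(\tilde g)$ we have, by Proposition \ref{prop:Dirac_gap}, that ${\rm dist}(e^{-i\nu_0}, {\rm spec}(M_{{\rm D},d_0})) \gtrsim (\tilde g - g)\Tper$ when $\nu_0\in\mathcal{I}(g)$, hence $\|(M_{{\rm D},d_0} - e^{-i\nu_0})w\| \gtrsim (\tilde g - g)\Tper\,\|w\|$ for every $w\in{\rm BL}(d_0,\varepsilon)$. Now I would estimate $\|(M^\varepsilon - e^{-i\nu_0})u_{\rm BL}\|$ from below: transfer $M^\varepsilon u_{\rm BL}$ to $\widetilde{M}_{\rm D}^\varepsilon u_{\rm BL}$ using Corollary \ref{cor:mono_approx} (error $O(\varepsilon)\|u_{\rm BL}\|$, absorbing the $\varepsilon\alpha$ vs.\ $\alpha$ normalization via the footnote in Definition \ref{BLDpkt}), then use the lower bound above to get $\|(\widetilde{M}_{\rm D}^\varepsilon - e^{-i\nu_0})u_{\rm BL}\| \gtrsim (\tilde g - g)\Tper\,\|u_{\rm BL}\|$. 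On the other hand, $\|(M^\varepsilon - e^{-i\nu_0})u\| \le |\mathcal{I}'|\,\|u\| \le C_1(\tilde g - g)\Tper\,\|u\|$, and $\|(M^\varepsilon - e^{-i\nu_0})u_\perp\| \le 2\|u_\perp\| < 2C_2(\tilde g - g)\|u\|$. Combining via the triangle inequality, $(\tilde g - g)\Tper\,\|u_{\rm BL}\| \lesssim C_1(\tilde g - g)\Tper\|u\| + C_2(\tilde g - g)\|u\| + O(\varepsilon)\|u\|$, so for $C_1, C_2$ small and $\varepsilon$ small, $\|u_{\rm BL}\| \le \tfrac12\|u\|$; but then $\|u_\perp\| = \|u - u_{\rm BL}\| \ge \|u\| - \|u_{\rm BL}\| \ge \tfrac12\|u\| > C_2(\tilde g - g)\|u\|$ once $C_2(\tilde g - g) < \tfrac12$, contradicting the assumption. (One must check the constants compose consistently; the freedom to shrink $C_1$ and $C_2$ makes this routine.)

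\textbf{Main obstacle.} The delicate point is step (i): Corollary \ref{cor:mono_approx} is stated for $\alpha_0\in H^4(\mathbb{R}^2;\mathbb{C}^2)$ with a constant depending on $\|\alpha_0\|_{H^4}$, whereas here the band-limited wave-packet $u$ is only assumed $L^2$. The resolution is that for $u_{\rm BL}\in{\rm BL}(d_0,\varepsilon)$ the underlying envelope $\alpha_\varepsilon$ has Fourier support in $\{|\xi|\le d_0\}$, so $\|\alpha_\varepsilon\|_{H^4} \le (1+d_0)^4\|\alpha_\varepsilon\|_{L^2}$ — the band-limiting upgrades $L^2$ control to $H^4$ control with a $d_0$-dependent (but $\varepsilon$-independent) constant. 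Thus the $O(\varepsilon)$ error in Corollary \ref{cor:mono_approx} is genuinely uniform over the unit ball of ${\rm BL}(d_0,\varepsilon)$, which is what the argument needs. A secondary technical point is the normalization mismatch between the $\psi = \varepsilon\alpha(\varepsilon\cdot)^\top\Phi$ convention of Corollary \ref{cor:mono_approx} and the $\psi = \alpha_\varepsilon(\varepsilon\cdot)^\top\Phi$ convention of Definition \ref{BLDpkt}; by the averaging lemma (Lemma \ref{lem:homog}), $\|u_{\rm BL}\|_{L^2(\mathbb{R}^2)} = \varepsilon^{-1}\|\alpha_\varepsilon\|_{L^2(\mathbb{R}^2)}(1+o(1))$, so the two conventions differ by a harmless factor and all estimates rescale consistently.
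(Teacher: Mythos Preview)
Your proposal is correct and follows essentially the same route as the paper. Both arguments decompose $u=u_{\rm BL}+u_\perp$ with respect to ${\rm BL}(d_0,\varepsilon)$, use the spectral localization hypothesis to bound $\|(M^\varepsilon-e^{-i\nu_0})u\|\le |\mathcal{I}'|\,\|u\|$, invoke Corollary~\ref{cor:mono_approx} (with band-limiting upgrading $L^2$ to $H^4$, exactly as you note) to replace $M^\varepsilon$ by $\tMD$ on $u_{\rm BL}$, and then exploit the Dirac spectral gap to produce a lower bound for $\|(\tMD-e^{-i\nu_0})u_{\rm BL}\|$; the triangle inequality finishes it. The only cosmetic differences are that the paper argues directly for a lower bound on $\|r\|$ rather than by contradiction, and in place of your abstract normal-operator resolvent estimate $\|(\tMD-e^{-i\nu_0})w\|\ge{\rm dist}(e^{-i\nu_0},{\rm spec}(\tMD))\|w\|$ the paper carries out the equivalent computation explicitly via the Fourier diagonalization of $M_{{\rm Dir},d_0}$ together with the averaging lemma (their Proposition~\ref{lb*}); your abstract version is cleaner but requires, as you correctly observe, that Lemma~\ref{lem:homog} makes the envelope-to-wave-packet map an isometry so that $\tMD$ is genuinely unitary on ${\rm BL}(d_0,\varepsilon)$ with spectrum $e^{-iE_D\Tper/\varepsilon}\cdot{\rm spec}(M_{{\rm D},d_0})$.
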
 
\begin{figure}[h]
\centering
{\includegraphics[scale=1]{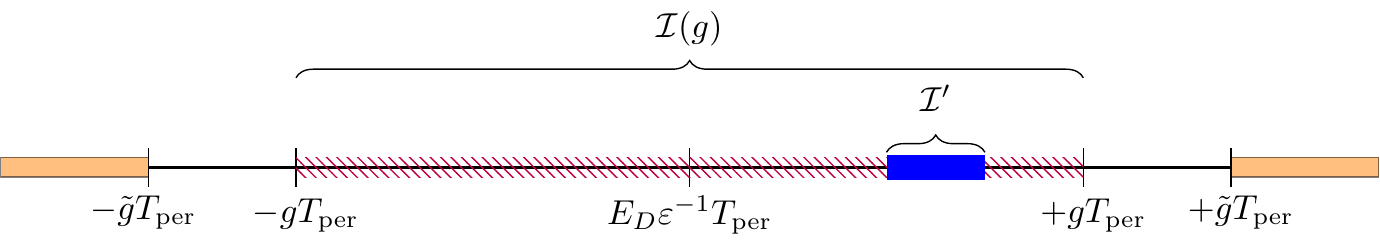}}
\caption{Effective gap result, Theorem \ref{thm:quasigap_general}, in terms of Floquet exponents.} 
\label{fig:detail}
\end{figure}
\begin{remark}\label{no-spec}
The case of a proper spectral gap, i.e., where the range of $\Pi^{\varepsilon}(\{e^{-i\nu} ~~|~~\nu \in \mathcal{I} \}$ is $\{0\}$, is certainly covered by Theorem \ref{thm:quasigap_general}. Indeed, if there is an arc in $S^1$ with no spectrum, then by our definitions it is also an ``effective gap'' .

\end{remark}

\begin{remark}\label{flat} As conjectured earlier, we expect the spectrum of the monodromy operator to be all of $S^1$. Concerning the spectral measure, we do not expect it to have a pure point component.  Indeed, if $z\in S^1$ is an $L^2(\mathbb{R}^2)$ eigenvalue of $M^{\varepsilon}$ if and only if $z$ is an $L^2_k$ eigenvalue of $M^{\varepsilon}$ for a set of quasi-momentum $\bk\in \mathcal{B}$ with positive Lebesgue measure \cite[Theorem XIII.85]{RS4}. This is possible, for example, if the Hamiltonian or Floquet Hamiltonian \eqref{eq:Floq_hamil} to have ``flat quasi-energy bands'',  which is considered highly unlikely for \eqref{eq:lsA}. An alternative, more physical argument, is to observe that if indeed ${\rm spec}(M^{\varepsilon}) = S^1$, then point spectrum would correspond to embedded eigenvalues in the continuous spectrum. For generic potentials $V$ and forcings $A$ one would expect such states to couple to the continuum be, at best, long lived (metastable) scattering resonance modes of finite lifetime \cite{yajima1982resonances}.
\end{remark}

{\rev Theorem 5.1 gives quantitative information on the effective gap; any state supported in the spectral subspace of $M^\varepsilon$ associated with interval $\mathcal{I}$ (see (5.1)) cannot be arbitrarily concentrated in ${\rm BL}(d_0, \varepsilon)$. In particular, such a state must contain a non-negligible projection onto ${\rm BL}(d_0, \varepsilon)^{\perp}$.}
While the ${\rm BL}(d_0,\varepsilon)$ condition may seem highly restrictive, the fact that Theorem  \ref{thm:quasigap_general} applies even for modes with \textit{some} ${\rm BL}(d_0,\varepsilon)^{\perp}$ parts mean that the mode in question can contain:
\begin{enumerate}
\item Modulations of higher-order Bloch modes at $k_{D}$, i.e., $\Phi_3, \Phi_4$ etc.
\item Components of quasi-momentum $k$ far away from $\bk _D$.
\item Modulations $\vec{\beta}(\varepsilon \bx)^{\top}\Phi(x)$ with $\vec{\beta}\in L^2(\mathbb{R}^2;\mathbb{C}^2)$ such that $\beta$ contains high Fourier-frequencies, i.e., $\vec{\beta }\in \chi (|\nabla |>d_0 )L^2(\mathbb{R}^2;\mathbb{C}^2) $.
\end{enumerate}

\section{Proof of Theorem \ref{thm:quasigap_general}}\label{sec:mainpf}

Fix $0<g<\tilde{g}$, $\mathcal{I}(g)\subset \mathcal{I}(\tilde{g})$. Let  $\varepsilon_0$, $ C_1$, and $C_2$ be constants to be determined. Let $|\varepsilon|<\varepsilon_0$ and $\mathcal{I}^\prime\subset\mathcal{I}(g) $ be  such 
that $|\mathcal{I}^\prime|\le C_1(\tilde{g}-g)$. Finally, let  $u\in L^2(\mathbb{R}^2)$, $u\ne0$ and such that
$\Pi^{\varepsilon}(\{e^{-i\nu}  ~|~\nu \in \mathcal{I}'\})u  = u$.

By the orthogonal decomposition of $L^2(\R^2)$ in Proposition \ref{BLDpkt-closed}, we may decompose  $u$ as:\begin{align}
u(\bx) &= u_{wp}^\varepsilon(\bx) + r(\bx),\qquad \textrm{where} \label{u-expand}\\ 
u_{wp}^\varepsilon &\in {\rm BL}(d_0,\varepsilon),\quad   r\in {\rm BL}(d_0,\varepsilon)^\perp .   \label{r-est}
   \end{align}
%
%
%
{\em To prove Theorem \ref{thm:quasigap_general}, we need to bound $\|r\|_{L^2(\R^2)}$ from below.} The monodromy operator of the effective Dirac equation, $M_{{\rm Dir},d_0}$,
 gives rise to an approximation to the  monodromy operator for the Schr{\"o}dinger equations \eqref{eq:lsA}, $\tMD$ (defined in \eqref{tMD-def}). As defined, $\tMD$ only acts on the space ${\rm BL}(d_0,\varepsilon)$. However we may extend it to all $L^2(\R^2)$: 
\begin{lemma}\label{extend}
$\tMD$ extends to all $L^2(\R^2)={\rm BL}(d_0,\varepsilon)\oplus {\rm BL}(d_0,\varepsilon)^\perp$ as a unitary operator.
\end{lemma}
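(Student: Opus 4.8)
The plan is to show that $\tMD$, as defined on $\mathrm{BL}(d_0,\varepsilon)$ via \eqref{tMD-def}, is an isometry onto its image, and then extend it to all of $L^2(\mathbb{R}^2)$ by declaring it to act as the identity (or any fixed unitary) on the orthogonal complement $\mathrm{BL}(d_0,\varepsilon)^\perp$. For this to produce a well-defined unitary operator on the whole space, the key point is that $\tMD$ must map $\mathrm{BL}(d_0,\varepsilon)$ \emph{isometrically onto} $\mathrm{BL}(d_0,\varepsilon)$ itself (not merely into $L^2$); once that is established, the direct-sum operator $\tMD \oplus \mathrm{I}_{\mathrm{BL}(d_0,\varepsilon)^\perp}$ is unitary on $L^2(\mathbb{R}^2) = \mathrm{BL}(d_0,\varepsilon)\oplus \mathrm{BL}(d_0,\varepsilon)^\perp$.

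First I would unpack the definition \eqref{tMD-def}: for $\psi^\varepsilon_{wp}(\bx) = \varepsilon\,\alpha_0(\varepsilon\bx)^\top\Phi(\bx)$ with $\widehat{\alpha_0}$ supported in $\{|\bxi|\le d_0\}$, we have $\tMD\psi^\varepsilon_{wp}(\bx) = \varepsilon\,(M_{\mathrm{Dir}}\alpha_0)(\varepsilon\bx)^\top\Phi(\bx)\,e^{-iE_D\Tper/\varepsilon}$. Since $M_{\mathrm{Dir}}$ commutes with spatial translations, $M_{\mathrm{Dir}}\alpha_0 = M_{\mathrm{D},d_0}\alpha_0$ again has Fourier support in $\{|\bxi|\le d_0\}$ (this was the content of the invariance observation preceding Proposition~\ref{prop:Dirac_gap}), so $\tMD\psi^\varepsilon_{wp}\in\mathrm{BL}(d_0,\varepsilon)$. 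Thus $\tMD$ genuinely maps $\mathrm{BL}(d_0,\varepsilon)$ into itself. For the isometry property, I would invoke the Averaging Lemma (Lemma~\ref{lem:homog}) exactly as in the proof of Proposition~\ref{BLDpkt-closed}: applying it with $q_{j,l}(X) = \overline{(M_{\mathrm{Dir}}\alpha_0)_j(X)}(M_{\mathrm{Dir}}\alpha_0)_l(X)$ and $p_{j,l}(\bx) = \overline{\Phi_j(\bx)}\Phi_l(\bx)$, and using orthonormality $\int_\Omega p_{j,l} = \delta_{j,l}$, gives $\|\tMD\psi^\varepsilon_{wp}\|_{L^2(\mathbb{R}^2)}^2 = \|M_{\mathrm{Dir}}\alpha_0\|_{L^2(\mathbb{R}^2;\mathbb{C}^2)}^2$; since $M_{\mathrm{Dir}}$ is unitary on $L^2(\mathbb{R}^2;\mathbb{C}^2)$ (it is the monodromy of the unitary Dirac flow, cf.\ \eqref{unitarity}), this equals $\|\alpha_0\|_{L^2(\mathbb{R}^2;\mathbb{C}^2)}^2 = \|\psi^\varepsilon_{wp}\|_{L^2(\mathbb{R}^2)}^2$ (the last equality by the same averaging computation applied to $\alpha_0$ itself). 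The unimodular factor $e^{-iE_D\Tper/\varepsilon}$ is irrelevant for norms.

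Finally, I would check surjectivity of $\tMD$ onto $\mathrm{BL}(d_0,\varepsilon)$: given any target $\varepsilon\,\beta(\varepsilon\bx)^\top\Phi(\bx)$ with $\widehat{\beta}$ supported in $\{|\bxi|\le d_0\}$, set $\alpha_0 = e^{iE_D\Tper/\varepsilon}M_{\mathrm{Dir}}^{-1}\beta$, which again has Fourier support in $\{|\bxi|\le d_0\}$ by translation invariance of $M_{\mathrm{Dir}}^{-1}$; then $\tMD(\varepsilon\,\alpha_0(\varepsilon\bx)^\top\Phi)$ equals the target. Hence $\tMD\colon\mathrm{BL}(d_0,\varepsilon)\to\mathrm{BL}(d_0,\varepsilon)$ is a surjective isometry, i.e.\ unitary on that subspace, and $\tMD\oplus\mathrm{I}_{\mathrm{BL}(d_0,\varepsilon)^\perp}$ is the desired unitary extension to $L^2(\mathbb{R}^2)$. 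The only mild subtlety — and the step I would be most careful about — is confirming that the Fourier-support condition defining $\mathrm{BL}(d_0,\varepsilon)$ is exactly preserved by $M_{\mathrm{Dir}}$; this is immediate from the fiberwise formula \eqref{MDirac}, since $\widehat{M_{\mathrm{Dir}}\alpha_0}(\bxi) = \hat{U}_{\mathrm{Dir}}(\Tper,\bxi)\widehat{\alpha_0}(\bxi)$ vanishes wherever $\widehat{\alpha_0}(\bxi)$ does, so no genuine obstacle arises.
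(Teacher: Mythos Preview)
Your proposal is correct and takes the same approach as the paper: extend $\tMD$ by the identity on $\mathrm{BL}(d_0,\varepsilon)^\perp$ and use the orthogonal decomposition of Proposition~\ref{BLDpkt-closed}. You are in fact more careful than the paper's two-line proof, explicitly verifying via Lemma~\ref{lem:homog} and the fiberwise formula~\eqref{MDirac} that $\tMD$ is a surjective isometry of $\mathrm{BL}(d_0,\varepsilon)$ onto itself---details the paper leaves implicit.
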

\begin{proof}
Recall that we have defined $\tMD$ on the closed subspace ${\rm BL}(d_0,\varepsilon)$. For $r\in{\rm BL}(d_0,\varepsilon)^\perp$ we define  $\tMD r=r$. By the orthogonal decomposition of $L^2(\R^2)$ (Proposition \ref{BLDpkt-closed}) and linearity, the extended operator
   $\tMD$ is unitary on all $L^2(\mathbb{R}^2)$. 
   \end{proof}
By the effective dynamics approximation result,  Corollary \ref{cor:mono_approx}, we have 
\begin{equation}
\|(M^{\varepsilon}-\tMD) u_{\rm wp}^{\varepsilon} \|_{L^2(\mathbb{R}^2)}  \le C \varepsilon,  \, .\label{est}\end{equation}
where $C$ is independent of $\varepsilon$.\footnote{
The upper bound in Corollary \ref{cor:mono_approx} generally depends on $\|\alpha_0\|_{H^4(\mathbb{R}^2)}$. However, for any $u\in \chi(|\nabla |< d_0)L^2(\mathbb{R}^2)$ with $\|u\|_{L^2(\mathbb{R}^2)}=1$, then $\|u\|_{H^4(\mathbb{R}^2)}$ are uniformly bounded from above, and therefore we omit the explicit dependence on this norm.}
Using that $u_{\rm wp}^{\varepsilon}=u-r$, $\tMD r=r$, and  \eqref{est},
 we have 
\begin{align}
\|(M^{\varepsilon}-\tMD)u \|_{L^2(\mathbb{R}^2)} &\leq \|(M^{\varepsilon}-\tMD)u_{\rm wp}^{\varepsilon} \|_{L^2(\mathbb{R}^2)} + \|(M^{\varepsilon}-\tMD)r \|_{L^2(\mathbb{R}^2)} \nonumber \\
&\le C\varepsilon   + 2\|r\|_{L^2(\mathbb{R}^2)} 
\label{MtMD}\end{align}
\noindent
We next obtain, for $\varepsilon$ sufficiently small,  a lower bound on 
$\|(M^{\varepsilon}-\tMD)u \|_{L^2(\mathbb{R}^2)}$. Two key steps are required for this lower bound: 
\begin{enumerate}
\item {\bf 	Proposition \ref{red-app-estate}:} Using that $u$ is spectrally concentrated on $\mathcal{I}'$, with respect the measure $\Pi^{\varepsilon}$, we  prove that 
for $\nu_0 \in \mathcal{I}^\prime$
\[ M^{\varepsilon}u = e^{-i\nu_0}u +\rho,\quad {\rm where}\quad \|\rho\|\leq |\mathcal{I}'|\cdot \|u\|_{L^2(\R ^2)}.\]
\item {\bf Lower bound on $\| (e^{-i\nu_0} - \tMD)u_{wp}^{\varepsilon} \|_{L^2(\mathbb{R}^2)}$ :}  This is proved 
using scale separation, the spectral measure of $M_{\rm Dir}$, and a multiscale averaging lemma (Lemma \ref{lem:homog}). 
\end{enumerate}
Since $u$ is an approximate eigenvector of $M$ (step 1.), the latter lower bound and \eqref{MtMD} together provide a lower bound on  $r\in {\rm BL}(d_0,\varepsilon)^{\perp}$, defined in \eqref{r-est}, which will imply
Theorem \ref{thm:quasigap_general}.
\begin{proposition}\label{red-app-estate}
Let $\mathcal{I}^\prime\subset\mathcal{I}(g)\subset\mathcal{I}(\tilde{g})$ as above. Choose $\nu_0$ to be the midpoint of $ \mathcal{I}'$.  Then, 
\[ M^\varepsilon u = e^{-i\nu_0}u+\rho,\quad {\rm where}\quad 
 \|\rho\|_{L^2(\mathbb{R}^2)} \le |\mathcal{I}^\prime|\cdot \|u\|_{L^2(\mathbb{R}^2)}.\] 
\end{proposition}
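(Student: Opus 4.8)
Proposition \ref{red-app-estate} is a soft spectral-calculus fact about the unitary operator $M^\varepsilon$, and the plan is to prove it directly from the functional calculus associated with the projection-valued measure $\Pi^\varepsilon$. The hypothesis is that $u$ lies in the range of the spectral projection $\Pi^\varepsilon(\{e^{-i\nu}: \nu\in\mathcal{I}'\})$, i.e., the spectral measure of $u$ is supported on the short arc $\{e^{-i\nu}:\nu\in\mathcal{I}'\}\subset S^1$. Writing $M^\varepsilon=\int_{S^1} z\, d\Pi^\varepsilon(z)$ and $\nu_0$ for the midpoint of $\mathcal{I}'$, the natural choice is to compare $M^\varepsilon u$ with $e^{-i\nu_0}u$ and estimate the difference.

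\begin{proof}[Proof of Proposition \ref{red-app-estate}]
By hypothesis, $u=\Pi^{\varepsilon}(\mathcal{A})u$ where $\mathcal{A}=\{e^{-i\nu}: \nu\in\mathcal{I}'\}\subset S^1$. Hence $\rho:=M^{\varepsilon}u-e^{-i\nu_0}u=(M^{\varepsilon}-e^{-i\nu_0}I)\Pi^{\varepsilon}(\mathcal{A})u$. Using that $\Pi^{\varepsilon}$ is a projection-valued measure and $M^{\varepsilon}=\int_{S^1}z\,d\Pi^{\varepsilon}(z)$, we have
\[ \|\rho\|_{L^2(\mathbb{R}^2)}^2 = \int_{\mathcal{A}} |z-e^{-i\nu_0}|^2\, d\langle \Pi^{\varepsilon}(z)u,u\rangle . \]
For $z=e^{-i\nu}\in\mathcal{A}$ with $\nu\in\mathcal{I}'$ and $\nu_0$ the midpoint of $\mathcal{I}'$, one has $|\nu-\nu_0|\le |\mathcal{I}'|/2$, so
\[ |z-e^{-i\nu_0}| = |e^{-i\nu}-e^{-i\nu_0}| = 2\left|\sin\tfrac{\nu-\nu_0}{2}\right| \le |\nu-\nu_0| \le \tfrac{|\mathcal{I}'|}{2}. \]
Therefore
\[ \|\rho\|_{L^2(\mathbb{R}^2)}^2 \le \left(\tfrac{|\mathcal{I}'|}{2}\right)^2 \int_{\mathcal{A}} d\langle \Pi^{\varepsilon}(z)u,u\rangle = \left(\tfrac{|\mathcal{I}'|}{2}\right)^2 \|\Pi^{\varepsilon}(\mathcal{A})u\|_{L^2(\mathbb{R}^2)}^2 = \left(\tfrac{|\mathcal{I}'|}{2}\right)^2 \|u\|_{L^2(\mathbb{R}^2)}^2, \]
which gives $\|\rho\|_{L^2(\mathbb{R}^2)}\le \tfrac{|\mathcal{I}'|}{2}\|u\|_{L^2(\mathbb{R}^2)}\le |\mathcal{I}'|\cdot\|u\|_{L^2(\mathbb{R}^2)}$, as claimed.
\end{proof}

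There is essentially no obstacle here: the only subtlety is making sure the elementary trigonometric bound $|e^{-i\nu}-e^{-i\nu_0}|\le|\nu-\nu_0|$ is applied on the correct arc, which is exactly why $\nu_0$ is taken to be the midpoint of $\mathcal{I}'$ rather than an endpoint — this halves the effective width and in any case the stated bound $|\mathcal{I}'|\cdot\|u\|$ is not tight. One should double-check the normalization conventions of $\Pi^\varepsilon$ from Appendix \ref{ap:spectral} (that $\langle\Pi^\varepsilon(\cdot)u,u\rangle$ is a positive measure of total mass $\|u\|^2$ when $u$ is in the range of the relevant projection), but this is standard. The real work of the paper lies downstream: combining this elementary estimate with the lower bound on $\|(e^{-i\nu_0}-\tMD)u_{wp}^\varepsilon\|$ coming from the Dirac quasi-energy gap (Proposition \ref{prop:Dirac_gap}) and the averaging lemma (Lemma \ref{lem:homog}), together with the approximation \eqref{est}, to force $\|r\|$ to be bounded below.
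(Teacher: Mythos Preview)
Your proof is correct and follows essentially the same approach as the paper: both use the spectral representation $M^\varepsilon=\int_{S^1} z\,d\Pi^\varepsilon(z)$, express $\|\rho\|^2$ as $\int_{\mathcal{I}'}|e^{-i\nu}-e^{-i\nu_0}|^2\,d\langle\Pi^\varepsilon(e^{-i\nu})u,u\rangle$, and bound the integrand pointwise by $|\mathcal{I}'|^2$. Your use of the midpoint and the sine identity actually yields the slightly sharper constant $|\mathcal{I}'|/2$, which you then relax to the stated bound; the paper simply uses $|e^{-i\nu}-e^{-i\nu_0}|\le|\mathcal{I}'|$ directly.
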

\begin{proof} Since $\Pi^{\varepsilon}(\{e^{-i\nu} ~|~\nu \in \mathcal{I}' \})u = u$, we have using the spectral representation of the unitary operator $M^\varepsilon$ (Appendix \ref{ap:spectral}):
\begin{align*}
M^{\varepsilon}u & =\int\limits_{\{z=e^{-i\nu} : \nu \in \mathcal{I}' \}} M^{\varepsilon}\, d\Pi^{\varepsilon}(z) u =\int\limits_{\mathcal{I}'} e^{-i\nu} \, d\Pi^{\varepsilon}(e^{-i\nu})u  \\
 &= \int\limits_{\mathcal{I}'} e^{-i\nu_0} \,d\Pi^{\varepsilon}(e^{-i\nu})u  + \int\limits_{\mathcal{I}'} (e^{-i\nu}-e^{-i\nu_0}) \,d\Pi^{\varepsilon}(e^{-i\nu})u \\
&= e^{-i\nu_0}u + \rho,\quad {\rm where}\quad \rho \equiv \int\limits_{\mathcal{I}'} (e^{-i\nu}-e^{-i\nu_0}) \,d\Pi^{\varepsilon}(e^{-i\nu})u \,  .\end{align*}
\noindent
Therefore,
\begin{align*}
\|\rho \|_{L^2(\mathbb{R}^2)}^2 &= \Big\| \int\limits_{\mathcal{I}'} \left(  e^{-i\nu} - e^{-i\nu_0} \right) \, d\Pi^{\varepsilon}(e^{-i\nu})u \Big\| _{L^2(\mathbb{R}^2)}^2 \\
&=  \int\limits_{\mathcal{I}'} \big| e^{-i\nu} - e^{-i\nu_0} \big|^2 \, \left\langle d\Pi^{\varepsilon}(e^{-i\nu})u,u\right\rangle 
    \le |\mathcal{I}'|^2 \cdot \|u \|_{L^2(\mathbb{R}^2)}^2\ .
\end{align*}
 This completes the proof of Proposition \ref{red-app-estate}.
\end{proof}

Proposition \ref{red-app-estate} implies
\begin{equation}
 \Big\| \left(M^\varepsilon-\tMD\right)u \Big\|_{L^2(\mathbb{R}^2)} \ge \Big\| \left(e^{-i\nu_0}-\tMD\right)u \Big\|_{L^2(\mathbb{R}^2)}  -  |\mathcal{I}'|\cdot \|u\|_{L^2(\mathbb{R}^2)} .\label{a1}
 \end{equation}
The upper bound  \eqref{MtMD}  and the lower bound \eqref{a1} gives:
\begin{align}
\Big\| \left(e^{-i\nu_0}-\tMD\right)u \Big\|_{L^2(\mathbb{R}^2)}  -  |\mathcal{I}'|\cdot \|u\|_{L^2(\mathbb{R}^2)} &\le 
 C\varepsilon   + 2\|r\|_{L^2(\mathbb{R}^2)}
\label{uplow}\end{align}

We now require a lower bound on $  \|( e^{-i\nu_0}I -\tMD )u \|_{L^2(\mathbb{R}^2)}$ for all $\varepsilon$ sufficiently small. 
 Since $u=u_{wp}^\varepsilon + r$ and since $\tMD$ is the identity on ${\rm BL}(d_0,\varepsilon)^{\perp}$ (Lemma \ref{extend}) we have, by the triangle inequality,
\begin{equation}\label{eq:tri_centered}
\Big\|\left( e^{-i\nu_0}I -\tMD\right)u \Big\|_{L^2(\mathbb{R}^2)}
\ge  \Big\|\left( e^{-i\nu_0}I -\tMD\right)u_{wp}^\varepsilon \Big\|_{L^2(\mathbb{R}^2)} - 
 \|r\|_{L^2(\mathbb{R}^2)} \, ,
\end{equation}
which together with \eqref{uplow} yields:
\begin{equation}
\Big\|\left( e^{-i\nu_0}I -\tMD\right)u_{wp}^\varepsilon \Big\|_{L^2(\mathbb{R}^2)} \le C\varepsilon + 3 \|r\|_{L^2(\mathbb{R}^2)}
 + |\mathcal{I}'|\cdot \|u\|_{L^2(\mathbb{R}^2)} .
\label{lb-2}
\end{equation}
Thus we now seek a lower bound for $ \|( e^{-i\nu_0}I -\tMD)u_{wp}^\varepsilon \|_{L^2(\mathbb{R}^2)}$. 

\begin{proposition}\label{lb*} For all $\varepsilon>0$ sufficiently small,
\begin{equation}\label{eq:gamma_lbd}
\left\| (e^{-i\nu_0} - \tMD)u_{wp}^{\varepsilon} \right\|_{L^2(\mathbb{R}^2)} \geq  \left(\frac12 \big|\mathcal{I}^\prime\big| + (\tilde{g}-g)\Tper \right)\|\alpha_0\|_{L^2(\mathbb{R}^2;\mathbb{C}^2)} \,   .
\end{equation}
\end{proposition}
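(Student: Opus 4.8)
\textbf{Proof strategy for Proposition \ref{lb*}.}
The plan is to pass to the Fourier side, where $\tMD$ is the fiberwise multiplication by $\hat U_{\rm Dir}(\Tper,\bxi)e^{-iE_D(\Tper/\varepsilon)}$ on the band-limited ball $\{|\bxi|\le d_0\}$, and to exploit the rescaling $u_{wp}^\varepsilon(\bx)=\varepsilon\alpha_0(\varepsilon\bx)^\top\Phi(\bx)$. First I would record that, by the same averaging identity used in Proposition \ref{BLDpkt-closed} (Lemma \ref{lem:homog}, applied with $p_{j,l}=\overline{\Phi_j}\Phi_l$ and $q_{j,l}$ built from the components of $(e^{-i\nu_0}I-\hat U_{\rm Dir})\hat\alpha_0$), the $L^2(\R^2_\bx)$ norm of $(e^{-i\nu_0}-\tMD)u_{wp}^\varepsilon$ equals, for $\varepsilon$ small, the $L^2(\R^2_X;\C^2)$ norm of the envelope $(e^{-i\nu_0}-M_{{\rm D},d_0}e^{-iE_D\Tper/\varepsilon})\alpha_0$; the orthonormality of $\{\Phi_1,\Phi_2\}$ on $\Omega$ collapses the cross terms exactly as in the cited proof. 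So the task reduces to a lower bound on $\|(e^{-i\nu_0}-\lambda_{\rm D}M_{{\rm D},d_0})\alpha_0\|$ where $\lambda_{\rm D}=e^{-iE_D\Tper/\varepsilon}$ is a fixed unimodular scalar.

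Next, using \eqref{MDirac}--\eqref{eq:floq_mult}, by Plancherel this norm is $\frac{1}{(2\pi)^2}\int_{|\bxi|\le d_0}\big|(e^{-i\nu_0}I-\lambda_{\rm D}\hat U_{\rm Dir}(\Tper,\bxi))\hat\alpha_0(\bxi)\big|^2\,d\bxi$. For each $\bxi$ the matrix $\lambda_{\rm D}\hat U_{\rm Dir}(\Tper,\bxi)$ has eigenvalues $\lambda_{\rm D}e^{\pm i\mu(\bxi)\Tper}=e^{-i(E_D\Tper/\varepsilon\mp\mu(\bxi)\Tper)}$, so on the orthonormal eigenbasis $\{v_\pm(\bxi)\}$ the operator $e^{-i\nu_0}I-\lambda_{\rm D}\hat U_{\rm Dir}(\Tper,\bxi)$ acts diagonally with entries $e^{-i\nu_0}-e^{-i\theta_\pm(\bxi)}$, where $\theta_\pm(\bxi)=E_D\Tper/\varepsilon\mp\mu(\bxi)\Tper$. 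The key geometric point is that $\nu_0\in\mathcal{I}'\subset\mathcal{I}(g)=(E_D\varepsilon^{-1}+[-g,g])\Tper$, while by Proposition \ref{prop:Dirac_gap} one has $|\mu(\bxi)|\ge\tilde g$ for $|\bxi|\le d_0$; hence the distance on $S^1$ between the phase $\nu_0$ and each phase $\theta_\pm(\bxi)$ is at least $(\tilde g-g)\Tper$ (working in the small-angle regime so that chordal and angular distance are comparable, which is where the choice of $\tilde g-g$ small and the absorbed constant enter). Therefore $|e^{-i\nu_0}-e^{-i\theta_\pm(\bxi)}|\ge c(\tilde g-g)\Tper$ uniformly in $|\bxi|\le d_0$, which gives
\[
\big\|(e^{-i\nu_0}-\lambda_{\rm D}\hat U_{\rm Dir}(\Tper,\bxi))\hat\alpha_0(\bxi)\big\|_{\C^2}\ \ge\ c(\tilde g-g)\Tper\,\|\hat\alpha_0(\bxi)\|_{\C^2},
\]
and integrating over $|\bxi|\le d_0$ (recall $\hat\alpha_0$ is supported there since $u_{wp}^\varepsilon\in{\rm BL}(d_0,\varepsilon)$) yields $\|(e^{-i\nu_0}-\tMD)u_{wp}^\varepsilon\|\ge c(\tilde g-g)\Tper\|\alpha_0\|_{L^2}$. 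The extra $\frac12|\mathcal{I}'|$ term in the claimed bound should come from keeping the triangle-inequality estimate in a slightly sharper form — splitting off the part of the integral near the endpoints of $\mathcal{I}'$, or more simply noting $|e^{-i\nu_0}-e^{-i\theta_\pm}|\ge$ (distance from $\nu_0$ to the gap boundary) $+$ (half the width we are willing to give away) — i.e.\ the statement is really $\ge(\tilde g-g)\Tper+\tfrac12|\mathcal{I}'|$ because $\nu_0$ is the \emph{midpoint} of $\mathcal{I}'$ and one can absorb half of $|\mathcal{I}'|$ into the gap margin.

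\textbf{Main obstacle.} The routine parts are the Fourier-side diagonalization and the averaging-lemma reduction; the delicate point is the uniform chord-length lower bound. One must ensure that $\nu_0$, $\theta_\pm(\bxi)$ and $E_D\Tper/\varepsilon$ are compared \emph{modulo $2\pi$} correctly: $E_D\Tper/\varepsilon$ can be arbitrarily large, so it is essential that $\nu_0$ and $\theta_\pm(\bxi)$ differ from the \emph{same} reference angle $E_D\Tper/\varepsilon$ by the small quantities $\mathcal{O}(g\Tper)$ and $\mathcal{O}(\mu(\bxi)\Tper)$ respectively, whence their difference is $\mathcal{O}((\tilde g+g)\Tper)$ — small — and the chord $|e^{-i\nu_0}-e^{-i\theta_\pm}|$ is genuinely comparable to the angular separation, with comparability constant under control precisely because $\tilde g$ (hence the relevant angles) is small. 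This is exactly the role of the constant $C_1$ in Theorem \ref{thm:quasigap_general}: requiring $|\mathcal{I}'|\le C_1(\tilde g-g)$ keeps everything in the linear regime where $|e^{i\theta}-1|\ge\frac{2}{\pi}|\theta|$ applies, and this is the one place where care, rather than computation, is needed.
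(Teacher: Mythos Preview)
Your proposal is correct and follows essentially the same route as the paper: reduce the two-scale norm to the envelope norm via the averaging Lemma \ref{lem:homog} and orthonormality of $\{\Phi_1,\Phi_2\}$, pass to the Fourier side by Plancherel, diagonalize $\hat U_{\rm Dir}(\Tper,\bxi)$ on $\{v_\pm(\bxi)\}$, and then use the geometric fact that $\nu_0$ is the midpoint of $\mathcal{I}'\subset\mathcal{I}(g)$ while the phases $(E_D\varepsilon^{-1}\mp\mu(\bxi))\Tper$ lie outside $\mathcal{I}(\tilde g)$, giving the angular separation $\ge \tfrac12|\mathcal{I}'|+(\tilde g-g)\Tper$. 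Your ``main obstacle'' paragraph correctly isolates the one subtlety the paper glosses over, namely the chord-vs-angle comparison modulo $2\pi$; otherwise the arguments are the same.
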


Before proving Proposition \ref{lb*} we use it to conclude the proof of Theorem \ref{thm:quasigap_general}. By
 \eqref{eq:gamma_lbd} and \eqref{lb-2} we have 
 \begin{align*}
   \left(\frac12 \big|\mathcal{I}^\prime\big| + (\tilde{g}-g)\Tper \right)\|\alpha_0\|_{L^2(\mathbb{R}^2;\mathbb{C}^2)}
& \le C\varepsilon + 3 \|r\|_{L^2(\mathbb{R}^2)}
 + |\mathcal{I}'|\cdot \|u\|_{L^2(\mathbb{R}^2)}\\
 & \le C\varepsilon + 3 \|r\|_{L^2(\mathbb{R}^2)}
 + |\mathcal{I}'|\cdot \left( \|u_{wp}^\varepsilon\|\ +\ \|r\| \right)\\
 & = C\varepsilon + 3 \|r\|_{L^2(\mathbb{R}^2)}
 + |\mathcal{I}'|\cdot \left( \|\alpha_0\|\ +\ \|r\| \right),
 \end{align*}
 where the latter equality holds since $\|u^\varepsilon_{\rm wp}\|_{L^2(\R^2)}=  \|\alpha_0\|_{L^2(\mathbb{R}^2;\mathbb{C}^2)}$, for $\varepsilon$ sufficiently small, by averaging Lemma \ref{lem:homog}.  Therefore, 
 \begin{align*}
   \left(-\frac12 \big|\mathcal{I}^\prime\big| + (\tilde{g}-g)\Tper \right)\|\alpha_0\|_{L^2(\mathbb{R}^2;\mathbb{C}^2)}
& \le C\varepsilon + (3 + |\mathcal{I}^\prime|) \|r\|_{L^2(\mathbb{R}^2)}
\end{align*}
Finally, take $|\mathcal{I}^\prime|< (\tilde{g}-g)\Tper$. By taking $\varepsilon$ sufficiently small we have
\begin{align*}
   C(\tilde{g}-g) 
& \le \|r\|_{L^2(\mathbb{R}^2)} =  \|({\rm I}-{\rm Proj}_{{\rm BL}(d_0,\varepsilon)})u\|_{L^2(\mathbb{R}^2)} . 
\end{align*}
%
 %
 This completes the proof of Theorem \ref{thm:quasigap_general}, with the exception of Proposition \ref{lb*},
which we now establish.

\begin{proof}[Proof of Proposition \ref{lb*}] $u_{wp}^\varepsilon\in~{\rm BL}(d_0,\varepsilon)$ we have (Definition \ref{BLDpkt})
 \[ u_{wp}^\varepsilon(\bx)\ =\ \varepsilon \alpha_0(\varepsilon \bx) \Phi(\bx;\bk _D)^\top\ \textrm{with ${\rm supp}(\hat\alpha_0)\subset\{|\bxi|\le d_0\}$.} \]
In terms of the Fourier transform of $\alpha_0$ and the eigenbasis $\{v_+(\bxi), v_-(\bxi)\}$ for $\C^2$, see \eqref{UDhat-evp}, we have 
\[ \alpha_0(X) =
 \frac{1}{(2\pi)^2}\sum_{r=\pm}\int_{|\bxi|\le d_0}e^{i\bxi X} \left\langle v_r(\bxi),\hat\alpha_0(\bxi)\right\rangle_{\C^2} v_r(\bxi) d\bxi \, ,\]
  and hence
  \begin{equation}
   e^{-i\nu_0}\ u_{wp}^\varepsilon(\bx) = {\rev e^{-i\nu_0}} \Big[ \frac{\varepsilon}{(2\pi)^2}\sum_{r=\pm}\int_{|\bxi|\le d_0}e^{i\bxi (\varepsilon \bx)} \left\langle v_r(\bxi),\hat\alpha_0(\bxi)\right\rangle_{\C^2} v_r(\bxi) d\bxi \Big]^\top \Phi(\bx)\label{nu-wp}
   \end{equation}
By \eqref{UDirac} and \eqref{UDhat} 
\begin{equation} M_{{\rm Dir},d_0}\alpha_0={U}_{\rm Dir}(\Tper,X)[\alpha_0](X)= \frac{1}{(2\pi)^2}\sum_{r=\pm}\int_{|\bxi|\le d_0}e^{i\bxi X} \left\langle v_r(\xi),\hat\alpha_0(\bxi)\right\rangle_{\C^2} e^{i r\mu(\bxi)\Tper}v_r(\bxi) d\bxi \ ,
\label{UD}
\end{equation}
and therefore
\begin{align}
\left(\tMD u_{wp}^\varepsilon\right)(x) 
&= \left[ \varepsilon M_{{\rm Dir},d_0}[\alpha_0](\varepsilon \bx)\right]^\top e^{-iE_D\tper}\ {\Phi}(\bx) \nonumber \\
&= \Big[\ \frac{\varepsilon}{(2\pi)^2}\sum_{r=\pm}\int_{ |\bxi| \le d_0}e^{i\bxi (\varepsilon \bx)} \left\langle v_r(\bxi),\hat\alpha_0(\bxi)\right\rangle_{\C^2} e^{i r\mu(\bxi)\Tper}v_r(\bxi) \ d\bxi \ e^{-iE_D\tper}\ \Big]^\top {\Phi}(\bx). \label{tMD-wp}
\end{align}
Subtracting \eqref{tMD-wp} and \eqref{nu-wp} yields the two-scale expression
   \[ (e^{-i\nu_0}I-\tMD) u_{wp}^\varepsilon(\bx)\ =\ \gamma^\varepsilon(\varepsilon \bx)^\top \Phi(\bx)\, ,\]
   where
   \[ \gamma^\varepsilon(X) = 
  \frac{\varepsilon}{(2\pi)^2}\int_{ |\bxi| \le d_0}e^{i\bxi X} \Big[\sum_{r=\pm}\left\langle v_r(\bxi),\hat\alpha_0(\bxi)\right\rangle_{\C^2}
   \left(e^{-i\nu_0}- e^{-i(E_D\varepsilon^{-1} - r\mu(\bxi))\Tper} \right) v_r(\bxi)\Big] \ d\bxi \ .\]
   We next claim that for all $\varepsilon$ sufficiently small:
   \begin{equation} \left\|(e^{-i\nu_0}-\tMD)u_{wp}^\varepsilon\right\|_{L^2(\mathbb{R}^2)}^2  =  \|\gamma^\varepsilon\|_{L^2(\mathbb{R}^2;\mathbb{C}^2)}^2 \, ,\label{key-lb}\end{equation}
   and therefore the desired lower bound on the norm of $(e^{-i\nu_0}-\tMD)u_{wp}^\varepsilon$ boils down 
   to a lower bound on the norm of $\gamma^\varepsilon(X)$. To prove \eqref{key-lb} we apply averaging Lemma \ref{lem:homog} and use the the orthonormality of  $\Phi_1, \Phi_2 \in L^2_k$.
 Specifically, we have
\begin{align*}
\left\|(e^{-i\nu_0}-\tMD)u_{wp}^\varepsilon\right\|_{L^2(\mathbb{R}^2)}^2 &= 
\left\|\varepsilon{\gamma}^\varepsilon(\varepsilon \bx)^{\top} \Phi(\bx) \right\|_{L^2(\mathbb{R}^2)}^2 = \int\limits_{\mathbb{R}^2} \varepsilon^2 \left| \sum\limits_{j=1}^2 \gamma^\varepsilon_j(\varepsilon \bx) \Phi_j(\bx) \right|^2 \, d\bx\\
&=  \sum\limits_{i, j=1}^2\int\limits_{\mathbb{R}^2} \varepsilon^2  \gamma^\varepsilon_j(\varepsilon \bx)\ \Phi_j(\bx)\ \overline{\gamma^\varepsilon_i(\varepsilon \bx)}\ \overline{\Phi_i(x)}  \,  d\bx \\
&=  \sum\limits_{i, j=1}^2 \langle \gamma^\varepsilon_i, \gamma^\varepsilon _j \rangle_{L^2(\mathbb{R}^2)} \langle \Phi_i, \Phi_j \rangle_{L^2(\Omega)} =  \sum\limits_{i, j=1}^2 \langle \gamma^\varepsilon_i, \gamma^\varepsilon_j \rangle_{L^2(\mathbb{R}^2)} \delta_{i,j}  \\
&=  \sum\limits_{j=1}^2 \|\gamma_i^\varepsilon\|_{L^2(\mathbb{R}^2)}^2 =
 \|\gamma^\varepsilon\|_{L^2(\mathbb{R}^2;\mathbb{C}^2)}^2 \, .
\end{align*}
This completes the proof of \eqref{key-lb}, and thus the desired lower bound reduces to a lower bound on the norm of $\gamma^\varepsilon(X)$. 

By the (vector) Plancherel Theorem and using 
the orthonormality of $\{v_+(\bxi),v_-(\bxi)\}$: 
 \begin{align*}
 \| \gamma^\varepsilon \|_{L^2(\mathbb{R}^2;\mathbb{C}^2)}^2 &= \int_{\R^2} \Big\|\ \sum_{r=\pm}\left\langle v_r(\bxi),\hat\alpha_0(\bxi)\right\rangle_{\C^2}
   \left(e^{-i\nu_0}- e^{-i(E_D\varepsilon^{-1} - r\mu(\bxi))\Tper} \right) v_r(\xi)\ \Big\|_{\C^2}^2\ d\bxi\\
   &= \int_{|\bxi|\le d_0} \Big|\ \sum_{r=\pm}\left\langle v_r(\bxi),\hat\alpha_0(\bxi)\right\rangle_{\C^2}
   \left(e^{-i\nu_0}- e^{-i(E_D\varepsilon^{-1} - r\mu(\bxi))\Tper} \right)\ \Big|^2\ d\bxi\\
   &\ge \min_{r=\pm} \min_{|\bxi|\le d_0}\Big|e^{-i\nu_0}- e^{-i(E_D\varepsilon^{-1} - r\mu(\bxi))\Tper}\Big|^2\ \|\alpha_0\|_{L^2(\mathbb{R}^2;\mathbb{C}^2)}^2
 \end{align*}
By Lemma \ref{lem:homog}, for $\varepsilon$ sufficiently small:  $\|u^\varepsilon_{\rm wp}\|_{L^2(\R^2)}=  \|\alpha_0\|_{L^2(\mathbb{R}^2;\mathbb{C}^2)}$,  and so 
 \[\left\|(e^{-i\nu_0}-\tMD)u_{wp}^\varepsilon\right\|_{L^2(\mathbb{R}^2)}\ge
  \min_{r=\pm} \min_{|\xi|\le d_0}\Big|e^{-i\nu_0}- e^{-i(E_D\varepsilon^{-1} - r\mu(\bxi))\Tper}\Big|\ \|\alpha_0\|_{L^2(\mathbb{R}^2;\mathbb{C}^2)}. \]
Now $\nu_0$ is centered in $\mathcal{I}' \subset \mathcal{I}(g)\subset \mathcal{I}(\tilde{g})$, see Fig.\ \ref{fig:detail}. Hence, \[ \Big|e^{-i\nu_0}- e^{-i(E_D\varepsilon^{-1} \pm\mu(\xi))\Tper}\Big| \ge\frac12 \big|\mathcal{I}^\prime\big| + (\tilde{g}-g)\Tper ,\] 
and therefore, for $\varepsilon>0$ sufficiently small we obtain the lower bound \eqref{eq:gamma_lbd}. This completes the proof of Proposition \ref{lb*}.
\end{proof}

\section{Proof of the Homogenization Lemma \ref{lem:homog}}\label{sec:pf_homog}

Since the fundamental cell of the lattice $\Omega$ tiles the plane, we partition $\mathbb{R}^2$ with respect to the lattice, i.e., $\mathbb{R}^2 = \bigcup_{\bm\in \Lambda} (\Omega +\bm)$.
Therefore 
\begin{align*}
\int\limits_{\mathbb{R}^2}p(\bx) 	q(\varepsilon \bx) \, d\bx &= \sum\limits_{\bm \in \Lambda}\int\limits_{\Omega + \bm}  p(\bx)q(\varepsilon \bx) \, d\bx \\
(\text{change of variables} ~~ \bx=\by+\bm)\qquad  \qquad  &=\sum\limits_{\bm \in \Lambda}\int\limits_{\Omega} p(\by) q(\varepsilon(\by + \bm)) \, d\by \\
&=\int\limits_{\Omega} p(\by)\left[ \sum\limits_{\bm \in \Lambda} q(\varepsilon(\by + \bm))\right] \, d\by \, . \numberthis \label{eq:gpre_pois}
\end{align*}
Using a generalization of Poisson summation formula for general lattices \cite{stein2016introduction}, then
$$\sum\limits_{\bm \in \Lambda} q(\by + \bm) = \sum\limits_{\bn \in \Lambda} \hat{q}(\bn)e^{2\pi i  \by\cdot \bn} \, ,$$
where $\hat{q}$ is the Fourier transform of $q$. Since in two space dimensions we have that $\widehat{q(\varepsilon \cdot)}(\bxi) = \varepsilon^{-2} \hat{q}(\varepsilon ^{-1}\bxi )$, then 
\begin{align*}
\sum\limits_{\bm \in \Lambda} q(\varepsilon(\by + \bm)) &=  \sum\limits_{\bn \in \Lambda} \varepsilon^{-2}\hat{q}\left(\frac{\bn}{\varepsilon}\right)e^{2\pi i  \by\cdot \bn} \, .
\end{align*}
Now, since $q$ is band-limited, then for sufficiently small $\varepsilon$ all of the terms in the last sum vanish but $\bn=(0,0)$. Therefore,
$$\sum\limits_{\bm \in \Lambda} q(\varepsilon(\by + \bm))= \varepsilon^{-2}\hat{q}(0) = \varepsilon^{-2}\int\limits_{\mathbb{R}^2} q(\bx) \,d\bx \, , $$ which when substituted into \eqref{eq:gpre_pois}, yields
\begin{align*}
\int\limits_{\mathbb{R}^2}p(\bx) q(\varepsilon \bx) \, d\bx = \cdots &= \int\limits_{\Omega} p(\by)\left[ \varepsilon^{-2}\int\limits_{\mathbb{R}^2} q(\bx)\, d\bx \right] \, d\by \\
&= \varepsilon^{-2}\left( \int\limits_{\Omega} p(\by)\,d\by \right) \cdot \left( \int\limits_{\mathbb{R}^2} q(\bx) \, d\bx \right) \, . 
\end{align*}

\section{Derivation and time-scale of validity of effective Dirac dynamics : Proof of Theorem \ref{thm:valid} }\label{sec:Dirac}
\subsection{Multiple-scales expansion}\label{sec:ms}
We first derive the Dirac equation \eqref{eq:diracA} using formal multiple scales expansion, and in Section \ref{sec:eta} we  prove Prop.\ \ref{thm:valid}. We introduce slow temporal and spatial scales: $T=\varepsilon t$ and $X = \varepsilon \bx$, and formally view $\psi$ as a function of independent fast and slow variables:
 $\psi=\Psi(x,X,t,T)$. Hence,  $\partial_t\mapsto \partial_t+\varepsilon\partial_T$, 
$\nabla_{\bx}\mapsto \nabla_{\bx}+\varepsilon\nabla_X$ and \eqref{eq:lsA} may be rewritten as:
\begin{align}
 i\left(\partial_t-H^0\right)\Psi\ =\ \varepsilon H_1\Psi + \varepsilon^2 H_2\Psi , \label{ms-eqn}
\end{align}
where
\begin{equation} H_1 = -\left(i\partial_T+2\nabla_{\bx}\cdot\nabla_X+2iA(T)\cdot\nabla_{\bx}\right) 
\quad \textrm{and}\quad  H_2=
 - \left(\Delta_X+2iA(T)\cdot\nabla_X\right).\label{H12}\end{equation}
The procedure we describe will yield $\Psi(x,X,t,T)$ such that 
\[ \textrm{$\Psi(t,T,\bx,X)\Big|_{T=\varepsilon t,\ X=\varepsilon \bx}$ is an approximate solution of \eqref{eq:lsA}.}\] 
We seek a formal expansion for a solution to  \eqref{ms-eqn}, $\Psi(t,T,\bx,X)$,  consisting of a leading multiscale part plus a correction :
\begin{equation}\label{eq:multi_scale}
 \Psi =  \Psi\ord{0}(\bx,X,t,T) +\varepsilon \Psi\ord{1}(\bx,X,t,T) + \varepsilon ^2  \Psi\ord{2}(\bx,X,t,T) + \eta^\varepsilon (t,x) \, .
\end{equation}
Substitution of \eqref{eq:multi_scale} into \eqref{ms-eqn} yields the hierarchy of equations:
\begin{align}
&\mathcal{O}(\varepsilon^0):
(i\partial_t- H^0)\Psi\ord{0}\ =\ 0;\qquad\qquad \Psi(0,\bx,0,X) = \alpha_{0,1}(X)\Phi_1 + \alpha_{0,2}(X)\Phi_2 \label{eq:o0}\\
 \nonumber\\
&\mathcal{O}(\varepsilon^1):
(i\partial_t-H^0)\Psi\ord{1}\ =\ H_1 \Psi\ord{0};\qquad\quad \Psi_1(0,\bx,0,X) = 0 \label{eq:o1} \\
&\nonumber\\
&\mathcal{O}(\varepsilon ^2):(i\partial_t-H^0)\Psi_2\ =\ H_1 \Psi_1 + H_2 \Psi_0;\quad \Psi_2(0,\bx,0,X) = 0  \label{eq:o2} \\
 \nonumber\\ 
 &\textrm{ corrector}: \left(i\partial_t-H^0-2i\varepsilon A(\varepsilon t)\cdot \nabla_{\bx} \right)\eta^{\varepsilon}(t,\bx)\ =\ 
\varepsilon ^3\mathcal{F}^\varepsilon(t,\bx);\qquad \eta(0,\bx) = 0 \, ,\label{eta-cor} 
\end{align}
where
\begin{equation} \mathcal{F}^\varepsilon(t,\bx) =\left(H_1 \Psi _2 +H_2 \Psi_1\right)(t,T,\bx,X) \Big|_{T=\varepsilon t, X=\varepsilon \bx} + \varepsilon \left(H_2 \Psi_2\right)(t,T,\bx,X) \Big|_{T=\varepsilon t, X=\varepsilon \bx}.
 \label{F-def}\end{equation}
Since we are looking to construct solutions of wave-packet type which are spectrally localized at $\bk_{\rm D}=\bK$ we shall seek an expansion where,
for $j=0,1,2$,\[ \textrm{ $\bx\mapsto \Psi_j(t,\bx,X,T)$ is $\bK-$ pseudo-periodic with respect to $\bx$ and decaying to zero as $|X|\to\infty$.}\]
We view equations \eqref{eq:o0}-\eqref{eq:o2} as equations in the space $L^2_\bK$, depending on parameters $T$ and~$X$. The variations with respect to $T$ and $X$ are determined via the solvability conditions of this hierarchy of equations.

We first consider \eqref{eq:o0}. Since the  $L^2_\bK$ nullspace of $ E_D I-H^0$ is given by $\rm{span}\{\Phi_1,\Phi_2\}$,  we have:
\begin{equation}\label{eq:wavepackets}
     \Psi_0\ =\varepsilon \ e^{-iE_Dt}\ \sum_{j=1}^2\ \alpha_j(X,T)\ \Phi_j(x)  \, ,
\end{equation}
where  $\alpha_1(X,T)$ and $\alpha_2(X,T)$ are decaying functions of $X$ to be determined.
The pre-factor of $\varepsilon$ in \eqref{eq:wavepackets} is inserted so that the $L^2(\R^2_x)$ norm is $\mathcal{O}(1)$.

  Turning to first order in $\varepsilon$, equation \eqref{eq:o1}, since $\Psi_0$ is oscillates with frequency $E_D$, it is convenient to define:
\[ \Psi_1(t,T,\bx,X)\ =\ e^{-iE_Dt}\ \tilde{\Psi}_1(T,\bx,X)\, .\]
Substitution into \eqref{eq:o1} yields
\begin{align}
\Big(\ E_D I -H^0\ \Big)\tilde\Psi_1\ &=\ -\ \sum_{j=1}^2\ i\ \partial_T\alpha_j(X,T)\ \Phi_j\ - i\ \sum_{j=1}^2\ \nabla_X\ \alpha_j(X,T)\cdot (-2i)\nabla_{\bx}\Phi_j \nonumber\\
&\quad\quad +\ 
\ \sum_{j=1}^2\ \alpha_j(X,T)\ A(T)\cdot (-2i)\nabla_{\bx}\Phi_j \, . \label{Psi1-eqn}
\end{align}
A necessary condition for the solvability of \eqref{Psi1-eqn} for $\tilde\Psi_1\in L^2_\bK$ is that the right hand side of \eqref{Psi1-eqn} be $L^2_\bK-$ orthogonal to
 $\Phi_1$ and $\Phi_2$.
 These two  solvability conditions read:
 \begin{align*}
i\partial_T\alpha_1\ &=\  -i\ \left\langle\Phi_1,-2i\nabla_{\bx}\Phi_2\right\rangle\cdot \nabla_X\alpha_2\ +\ 
 \left\langle\Phi_1,-2i\nabla_{\bx}\Phi_2\right\rangle\cdot  A(T)\ \alpha_2\ \\
i\partial_T\alpha_2\ &=\  -i \left\langle\Phi_2,-2i\nabla_{\bx}\Phi_1\right\rangle\cdot \nabla_X\alpha_1\ +\ 
 \left\langle\Phi_2,-2i\nabla_{\bx}\Phi_1\right\rangle\cdot  A(T)\ \alpha_1\ .
\end{align*}
By Proposition \ref{ipPhi}
\begin{align}
i\partial_T\alpha_1\ &=\ v_{\rm D}\Big(\ \frac{1}{i}\partial_{X_1}+\partial_{X_2}\ \Big)\ \alpha_2\ +\ 
v_{\rm D}\ \Big(\ A_1(T)\ +\ i A_2(T)\ \Big)\alpha_2\ \, ,\nonumber\\
i\partial_T\alpha_2\ &=\  v_{\rm D}\Big(\ \frac{1}{i}\partial_{X_1}-\partial_{X_2}\ \Big)\ \alpha_1\ +\ 
v_{\rm D}\ \Big(\ A_1(T)\ -\ i A_2(T)\ \Big)\alpha_1 \, , \label{dirac1}
\end{align}
which, after rearranging terms, is seen to be the Dirac equation 
\begin{equation}
i \partial_T \alpha(T,X) = \slashed{D}_{A}(T) \alpha(T,X);\label{dirac1a}
\end{equation}
 see \eqref{Dirac-op}-\eqref{eq:diracA}. 

Let $\pi^{\perp}$ denote the projection onto the orthogonal complement of ${\rm span}\{\Phi _1, \Phi_2 \}$ in $L^2 _{\bK}$:
\[ \pi^\perp = I\ -\  \sum_{j=1}^2 \left\langle\Phi_j,\cdot\right\rangle \Phi_j\ =\ \sum_{j\ge3}\left\langle\Phi_j,\cdot\right\rangle \Phi_j;\] 
for convenience we indexed the $L^2_{\bK}$ eigenpairs of $H^0$ such that
${\rm span}\{\Phi _1, \Phi_2 \}^\perp={\rm span}\{\Phi_j : j\ge3\}$.
  If $(\alpha_1,\alpha_2)$ is constrained to satisfy \eqref{dirac1}, then  $\pi^{\perp} \left( H_1\Psi_0 \right) = H_1\Psi_0$ and hence:
\begin{equation}\label{eq:psi1_exp}
\tilde{\Psi}_1(t,\bx,T,X) = (E_D I - H^0)^{-1} H_1\Psi_0 +\varepsilon \sum\limits_{j=1,2}\beta _j(T,X) \Phi_j (x) \, ,
\end{equation}
where $\varepsilon\beta_j$ are decaying functions of $X$ which are to be determined at the next order equation \eqref{eq:o2}, and finally
$\Psi_1 =~\tilde{\Psi}_1 e^{-iE_D t}$. Turning next to second order in $\varepsilon$, \eqref{eq:o2},  we write $\Psi_2(t,\bx,T,X) = \tilde{\Psi}_2(\bx,T,X) e^{-iE_Dt}$. In analogy with our first order analysis, the condition for solvability condition of \eqref{eq:o2} in $L^2_\bK$ is that $H_1 (\tilde{\Psi }_1+\sum_j \beta _j \Phi_j )$ is orthogonal to $\Phi_1$ and~$\Phi_2$. In a manner analogous  to the derivation of \eqref{eq:diracA}, we obtain a system of {\em forced} Dirac equations for $\beta \equiv (\beta_1,\beta_2)^{\top}$:
\begin{equation}\label{eq:dirac2}
i\partial_T \beta(T,X) -  \slashed{D}_A(T)\beta(T,X) = F_2(T,X) \, , 
\end{equation}
where $F_2=\left(F_{2,1} , F_{2,2} \right)^\top$, and  for $j=1,2$:
\begin{equation} \qquad F_{2,j} = \langle \Phi_j, H_1 (E_D-H^0)^{-1} H_1 \Psi_0 \rangle_{L^2(\Omega)} \, .
\end{equation}
We note that $F_2$ is independent of $\beta$, and is therefore a forcing term in \eqref{eq:dirac2}. 
Corresponding to any solution of the initial value problem for \eqref{eq:dirac2} in $C(\R;H^s(\R))$,  we have that 
\begin{equation}\label{eq:psi_2}
\Psi_2 (t,\bx,T,X) = e^{-iE_Dt} (E_D-H^0)^{-1}\pi^{\perp}\tilde{\Psi}_1 \, .
\end{equation}

\subsection{Bounding the corrector, $\eta^\varepsilon(t,\bx)$}\label{sec:eta}

Finally we turn to estimating the corrector, $\eta^\varepsilon$, which \eqref{eta-cor}, \eqref{F-def}.
By self-adjointness of $H^0 +~iA(\varepsilon t)\cdot~\nabla$, we have 
$ \partial_t \|\eta^{\varepsilon}(t)\|^2 = 2\varepsilon^3 {\rm Re}\ \langle \eta , \mathscr{F}^\varepsilon(t,\cdot) \rangle$.
This implies, by the Cauchy-Schwarz inequality, that
$\partial_t \|\eta^{\varepsilon}(t)\| \le  \varepsilon^3  \|\mathscr{F}^\varepsilon(t,\cdot)\|$. Therefore, for all $t\ge0$:
\begin{equation}\label{eq:eta_bdF}
\|\eta ^{\varepsilon}(t,\cdot) \|_{L^2(\R ^2)} \leq t\varepsilon ^3 \sup\limits_{s\in [0,t]}\|\mathscr{F}^\varepsilon(s, \cdot)\|_{L^2(\R ^2)} \, ,
\end{equation}
where $\mathscr{F}^\varepsilon$ is given by \eqref{F-def}. To prove Theorem \ref{thm:valid} we next bound  $\|\mathscr{F}^\varepsilon(t, \cdot)\|$ for $t\lesssim \varepsilon^{-2+}$.
\bigskip

\begin{proposition}\label{lem:bding_ab}
For all $t>0$, we have the following $L^2 (\mathbb{R}^2)$ bounds
\begin{align}
\|\eta^{\varepsilon}(t, \cdot)\|_{_{L^2 (\mathbb{R}_x^2)}} &\lesssim t\varepsilon^3 \sup\limits_{s\in [0,t]}\left(\|\alpha(\varepsilon s, \cdot) \|_{_{H^3(\mathbb{R}^2)}}  + \|\beta(\varepsilon s, \cdot)\|_{_{H^2(\mathbb{R}^2)}}\right) \, . \label{eq:eta_ab_bd}
\end{align}
Here, $\alpha = (\alpha _1 , \alpha _2 )^{\top}$ and $\beta = (\beta _1, \beta _2 ) ^{\top}$ are solutions of
 the homogeneous and inhomogeneous Dirac equations  \eqref{dirac1a} and \eqref{eq:dirac2}, respectively.
The implicit constant in \eqref{eq:eta_ab_bd} depends only on the honeycomb potential $V$. 
\end{proposition}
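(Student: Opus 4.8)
The plan is to deduce \eqref{eq:eta_ab_bd} from the energy inequality \eqref{eq:eta_bdF}, which is already in hand; what remains is to show that, for every $s\ge0$ and all sufficiently small $\varepsilon$,
\[
\|\mathscr{F}^\varepsilon(s,\cdot)\|_{L^2(\R^2)}\ \lesssim\ \|\alpha(\varepsilon s,\cdot)\|_{H^3(\R^2)}\ +\ \|\beta(\varepsilon s,\cdot)\|_{H^2(\R^2)},
\]
where $\mathscr{F}^\varepsilon$ is the forcing term of \eqref{F-def} and the implicit constant depends only on $V$ (with $A$ regarded as fixed). Substituting this into \eqref{eq:eta_bdF} gives the proposition.

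First I would record the structure of the multiple-scales profiles $\Psi_0,\Psi_1,\Psi_2$. Because $E_D$ is an isolated eigenvalue of $H^0\big|_{L^2_\bK}$ of multiplicity two (Section \ref{hlp-dp}), the reduced resolvent $R\equiv(E_D I-H^0)^{-1}\pi^\perp$ is bounded on $L^2_\bK$, and by elliptic regularity together with the two-dimensional embedding $H^2\hookrightarrow L^\infty$ all of the fixed, $\Lambda$-pseudoperiodic functions of $\bx$ that occur below --- namely $\Phi_1,\Phi_2$, their $\bx$-derivatives, and the functions obtained from these by applying $R$, $R^2$ and one further $\bx$-derivative --- are bounded, with $L^\infty$ norms controlled by $V$. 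Reading off \eqref{eq:wavepackets}, \eqref{eq:psi1_exp} and \eqref{eq:psi_2}, each $\Psi_j$ equals $e^{-iE_D t}$ times a finite sum of terms
\[
\varepsilon\,c(T)\,\big(\partial_X^a\gamma\big)(X,T)\,g(\bx),
\]
where $\gamma\in\{\alpha_1,\alpha_2\}$ for every $j$ (and $\gamma\in\{\beta_1,\beta_2\}$ only for $j=1$, since $\pi^\perp\Phi_1=\pi^\perp\Phi_2=0$ kills the $\beta$-part of $\Psi_2$), $|a|\le1$, $c(T)$ is a bounded coefficient built from the components of $A(T)$, and $g$ is one of the bounded $\bx$-profiles just described.

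Next I would apply $H_1$ and $H_2$ from \eqref{H12} to these expressions and form $\mathscr{F}^\varepsilon$ via \eqref{F-def}, using the Dirac equations \eqref{dirac1a}, \eqref{eq:dirac2} to replace every $\partial_T\gamma$ that is generated by $-i\slashed{D}_A(T)\gamma$ --- so a time derivative costs one $X$-derivative and a bounded coefficient, and a $\partial_T$ landing on the coefficient $A(T)$ is harmless because \eqref{eq:A_circ} is smooth. The outcome is that $\mathscr{F}^\varepsilon(s,\bx)$ is again a finite sum of terms of the same shape, still carrying the prefactor $\varepsilon$ and a bounded $\bx$-profile $g$, but with $|a|\le3$ in the $\alpha$-terms (the worst case is $\Delta_X$ from $H_2\Psi_1$ hitting a $\nabla_X\alpha$) and $|a|\le2$ in the $\beta$-terms (from $\Delta_X$ in $H_2\Psi_1$ hitting $\beta$). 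Estimating a single term by the change of variables $X=\varepsilon\bx$,
\[
\big\|\varepsilon\,c(\varepsilon s)\,(\partial_X^a\gamma)(\varepsilon\bx,\varepsilon s)\,g(\bx)\big\|_{L^2(\R^2)}\ \le\ \varepsilon\,\|c\|_\infty\|g\|_\infty\,\varepsilon^{-1}\,\|\partial_X^a\gamma(\cdot,\varepsilon s)\|_{L^2(\R^2)}\ \lesssim\ \|\gamma(\cdot,\varepsilon s)\|_{H^{|a|}(\R^2)},
\]
in which the $\varepsilon$ from the wave-packet normalisation cancels the $\varepsilon^{-1}$ from the rescaling; summing the finitely many terms gives the displayed bound on $\|\mathscr{F}^\varepsilon(s,\cdot)\|_{L^2}$ and hence \eqref{eq:eta_ab_bd}.

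The main obstacle is not any single estimate but the bookkeeping in the previous paragraph: one must verify that, after applying $H_1$, $H_2$ and eliminating $\partial_T$, \emph{every} remaining occurrence of the fast variable $\bx$ sits inside an $\varepsilon$-independent profile controlled by $V$ alone --- this is precisely where boundedness of $R$, i.e.\ the isolation of the Dirac eigenvalue, is used --- and that no more than three $X$-derivatives ever reach $\alpha$ (respectively two reach $\beta$) and that no stray negative power of $\varepsilon$ is produced.
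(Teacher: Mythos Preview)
Your strategy coincides with the paper's: reduce to bounding $\|\mathscr{F}^\varepsilon\|_{L^2}$ via the energy inequality \eqref{eq:eta_bdF}, express each $\Psi_j$ as a finite sum of slow amplitudes $\partial_X^a\gamma$ times fixed bounded $\bx$-profiles, apply $H_1,H_2$, convert $\partial_T$ to $X$-derivatives via the Dirac equations, and count. The one methodological difference is in how the $\bx$-profiles are shown to be in $L^\infty$: the paper expands $(E_D-H^0)^{-1}\pi^\perp H_1\Psi_0$ explicitly in the Bloch basis $\{\Phi_b\}_{b\ge3}$ and then controls the resulting series $\sum_b\|\Phi_b\|_\infty|E_b-E_D|^{-1}|\langle\Phi_b,\partial_x^r\Phi_j\rangle|$ by combining the Sobolev bound $\|\Phi_b\|_\infty\lesssim E_b$, Weyl asymptotics $E_b\sim b$, and the decay $|\langle\Phi_b,\partial_x^r\Phi_j\rangle|\lesssim b^{-2}$ (from self-adjointness of $H^0$). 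Your route through boundedness of the reduced resolvent plus elliptic regularity is more economical and achieves the same end.

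There is one bookkeeping slip. You assert that $\pi^\perp\Phi_j=0$ kills the $\beta$-part of $\Psi_2$, but $\pi^\perp$ is applied to $H_1\tilde\Psi_1+H_2\tilde\Psi_0$, not to $\tilde\Psi_1$ itself; and $H_1$ acting on $\varepsilon\sum_j\beta_j\Phi_j$ produces the terms $\varepsilon(\nabla_X\beta_j)\cdot\nabla_x\Phi_j$ and $\varepsilon\beta_j\,A\cdot\nabla_x\Phi_j$, which are \emph{not} annihilated by $\pi^\perp$. Hence $\Psi_2$ does carry $\beta$-terms with $|a|\le1$ --- this is why the paper's bound \eqref{eq:psi2_bd_ab} includes $\|\beta\|_{L^2}$. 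Recounting with these terms present, $H_1\Psi_2$ still produces $\partial_X^a\beta$ only with $|a|\le2$ (from the $\nabla_x\cdot\nabla_X$ piece of $H_1$ and from $\partial_T\to\slashed{D}_A$ on the $\nabla_X\beta$ term), so the stated $H^2$ control on $\beta$ survives; but your derivative count for $\mathscr{F}^\varepsilon$ has to be redone to include these contributions.
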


%
%
\begin{proof} We shall use the following convention. If $G(t,T,\bx,X)$ is a multi-scale function, then we write $G^\varepsilon(t,\bx)\equiv G(t,T,\bx,X)\Big|_{T=\varepsilon t, X=\varepsilon \bx}$. The proof of Proposition \ref{lem:bding_ab} makes use of the following bounds:

\begin{align}
\|\Psi_1^{\varepsilon}(t,\cdot )\|_2 &\lesssim \|\alpha(\varepsilon t,\cdot) \|_{H^1} + \|\beta(\varepsilon t,\cdot) \|_2  \, , \label{eq:psi1_bd_ab} \\
\|\Psi_2^{\varepsilon}(t,\cdot ) \|_2 &\lesssim \|\alpha(\varepsilon t, \cdot) \|_{H^1} + \|\vec{\beta}(\varepsilon t, \cdot) \|_2  \, , \label{eq:psi2_bd_ab} 
\\
\|H_1 \Psi_2 ^{\varepsilon}(t,\cdot )\|_2 &\lesssim \|\alpha(\varepsilon t, \cdot)\|_{H^2} + \|\vec{\beta}(\varepsilon t, \cdot)\|_{H^1} \, ,\label{eq:H1psi2_bd_ab}
\\
\|H_2 \Psi_1^{\varepsilon}(t,\cdot ) \|_2 &\lesssim\|\alpha(\varepsilon t, \cdot)\|_{H^3} + \|\vec{\beta}(\varepsilon t, \cdot)\|_{H^2} \, ,\label{eq:H2psi1_bd_ab}
\\
\|H_2 \Psi_2 ^{\varepsilon}(t,\cdot )\|_2 &\lesssim \|\alpha(\varepsilon t, \cdot)\|_{H^3} + \|\vec{\beta}(\varepsilon t, \cdot)\|_{H^2} \, ,\label{eq:H2psi2_bd_ab}\\
\|\mathcal{F}^{\varepsilon}(\varepsilon t,\cdot)\|_2 &\lesssim \|\alpha (\varepsilon t, \cdot)\|_{H^3} + \|\vec{\beta}(\varepsilon t, \cdot)\|_{H^2}  \, . \label{eq:F_ab_bd}
\end{align}
It will be useful to decompose $\tilde{\Psi}_1$ into two separate terms and bound each of these elements separately
$$\tilde{\Psi}_{11}=(E_D-H)^{-1}\pi^{\perp}H_1\Psi_0 \, , \qquad \tilde{\Psi}_{12} = \sum_j \beta _j \Phi_j \,  .$$
 We start with bounding $\|\tilde{\Psi}_{1,1}\|_2$. 
By definition
\begin{align*}
-H_1 \Psi _0 &= \varepsilon \sumj  i\partial_T \alpha_j(T,X)\Phi_j +2\nabla _x \Phi_j \cdot \nabla _X \alpha_j(T,X)  -2i\alpha _j(T,X)  A  \cdot \nabla \Phi_j 
\end{align*}
where $ {\Phi} = (\Phi_1, \Phi_2 )^{\top}$. Since $(E_D I -H)^{-1}$ operates on $L^2_{\bK}$, represent  $(E_D I -H)^{-1} \pi^\perp H_1 \Psi_0$ in the basis of the Bloch modes $\{\Phi_b\}_{b\geq3}$. Using that, by the derivation of \eqref{dirac1a},
 the constraint $i\partial_T\alpha(T,X)=\slashed{D}(T)\alpha(T,X)$ enforces  $\pi^\perp H_1\Psi_0=H_1\Psi_0$, we have
\begin{align*}
&\tilde{\Psi}^\varepsilon_{1,1}(t,\bx)=\tilde{\Psi}_{1,1}(t,\varepsilon t,\bx,\varepsilon \bx)\\
&=(E_D-H)^{-1}H_1 \Psi_0\Big|_{T=\varepsilon t, X=\varepsilon \bx}\\ &=\varepsilon\sum\limits_{b\geq 3} \sumj \frac{\Phi_b }{E_b -E_D}\langle \Phi_b ,  i\partial_T \alpha_j(T,X)\Phi_j +2\nabla _x \Phi_j \cdot \nabla _X \alpha_j(T,X) -i\alpha_j(T,X) A(T) \cdot \nabla_{\bx} \Phi_j \rangle_{L^2(\Omega)} \\
&=\varepsilon \sum\limits_{b\geq 3} \frac{\Phi_b }{E_b -E_D}\\
&\qquad\quad  \left[\langle \Phi_b , \Phi \rangle_{L^2(\Omega)}\cdot  \slashed{D}(T) \alpha(T,X) +\sumj\langle \Phi_b ,\nabla_{\bx} \Phi_j \rangle_{L^2(\Omega)}\cdot (2\nabla _X \alpha(T,X) -2 iA(T) \alpha_j(T,X) ) \right]_{T=\varepsilon t, X=\varepsilon \bx} \, . \\
\end{align*}
We estimate $\tilde{\Psi}^\varepsilon_{1,1}$ in $L^2=L^2(\mathbb{R}^2)$ using that
\begin{enumerate}
\item Since $\Phi_b \in L^{\infty}$, then $\|\Phi_b(\cdot) f(\cdot) \|_2 \lesssim \|f\|_2$ for any $f\in L^2$.
\item $ \|f(\varepsilon \cdot ) \|_{H^s}  \lesssim \varepsilon ^{-1} \|f\|_{H^s}$ for every $s\geq 0$.  
\end{enumerate} 
Hence,
\begin{align}
\|\tilde{\Psi}^\varepsilon_{1,1}(t,\cdot) \|_{L^2(\mathbb{R}^2)} & \lesssim  \varepsilon  \sum\limits_{b\geq 3}\sumj \frac{\|\Phi_b \|_{L^{\infty}} }{|E_b -E_D|} \big|\langle \Phi_b , \Phi_j \rangle_{_{L^2(\Omega)}}\big|\ \varepsilon^{-1}\|  \nabla_X\alpha_j (\varepsilon t,\cdot)\|_{L_X^2}\nonumber\\
&\qquad\quad + \varepsilon  \sum\limits_{b\geq 3} \frac{\|\Phi_b\|_{L^{\infty}} }{|E_b -E_D|}| \big|\langle \Phi_b , \nabla  \Phi_j\rangle _{_{L^2(\Omega)}}\big|\ \varepsilon^{-1}\|\alpha_j(\varepsilon t,\cdot)\|_{H_X^1}\nonumber  \\ 
&\lesssim  \|\alpha(\varepsilon t, \cdot) \|_{H_X^1} \sumj \sum\limits_{b\geq 3}  \frac{\|\Phi_b\|_{\infty} }{|E_b -E_D|}\left( 
\big|\langle \Phi_b , \Phi_j \rangle_{_{L^2(\Omega)}}\big| + |\langle \Phi_b , \nabla  \Phi_j\rangle _{_{L^2(\Omega)}}| \right) \, .
\label{Psi11-est}\end{align}
 By the Sobolev inequality \cite{evans1998partial} for $\Omega\subset\R^2$, and the relation $\Delta \Phi_b = (V-E_b)\Phi_b$, we have for any $b\ge1$:
$$\|\Phi_b \|_{L^{\infty}(\Omega)} \lesssim \|\Phi _b \| _{H^2(\Omega ) } \lesssim \|\Phi_b \|_{L^2(\Omega)} + \|\Delta \Phi_b \| _{L^2(\Omega)} =  \|\Phi_b \|_{L^2(\Omega)} + \|(V-E_b)\Phi_b \| _{L^2(\Omega)}\, .$$
Furthermore, since $V$ is bounded  
$\|\Phi_b \|_{L^{\infty}(\Omega)} \lesssim E_b \|\Phi_b\|_{L^2(\Omega)} \lesssim |b| \, $, 
where we have used  that $\|\Phi _b\|=1$, and that $E_b \sim b$ as $b\to \infty$, by Weyl asymptotics  in two dimensions. Hence, the factor $\|\Phi _b \|_{\infty} / |E_b- E_D|$ in \eqref{Psi11-est} is uniformly bounded for all $b$.
Therefore,  bounding $\|\tilde{\Psi}^\varepsilon_{1,1} \|_{L^2(\mathbb{R}^2)}$
 reduces to showing, for $j,l=1,2$:  
 \[\sum_{b\ge3} \big|\langle \Phi _b , \Phi _j \rangle_{_{L^2(\Omega)}}\big| +\big| \langle \Phi_b, \partial_{x_l} \Phi_j  \rangle _{_{L^2(\Omega)}} \big| < \infty\]
 We claim that both summands decay rapidly with $b$. Indeed, by the self-adjointedness of $H^0$, for $r=0,1$:
 \[
 \langle \Phi_b, \partial^r_{x_l} \Phi_j  \rangle = E_b^{-2}\langle (H^0)^2\Phi_b, \partial^r_{x_l} \Phi_j  \rangle
 =
E_b^{-2}\langle \Phi_b, (H^0)^2\partial^r_{x_l} \Phi_j  \rangle
 \]
 and therefore
 \[  \big|\langle \Phi_b, \partial^r_{x_l} \Phi_j  \rangle\big| \lesssim b^{-2} \| (H^0)^2\partial^r_{x_l} \Phi_j \| 
 \lesssim b^{-2} \|\Phi_j\|_{_{H^{4+r}}},\ r=0,1, \]
 which is sufficient to ensure summability. It follows that 
\begin{equation}\label{eq:psi11_bd}
\|\tilde{\Psi}^\varepsilon_{1,1}(t,\cdot )\|_{L^2(\mathbb{R}_x^2)} \lesssim \|\alpha(\varepsilon t,\cdot) \|_{H^1(\R^2_X)} \, .
\end{equation}
Together with the bound $\|\tilde{\Psi}_{1,2}(t,\cdot ) \|_2 \lesssim \varepsilon \|\beta (\varepsilon t,\varepsilon \cdot) \|_{L^2(\mathbb{R}_x^2)} \lesssim \|\beta(\varepsilon t,\cdot) \|_{L^2(\mathbb{R}_X)}$, we obtain \eqref{eq:psi1_bd_ab}. 
 
 The upper bounds \eqref{eq:psi2_bd_ab}--\eqref{eq:H2psi2_bd_ab} proceed in a similar fashion. The upper bound \eqref{eq:F_ab_bd} follows directly from the triangle inequality and $\mathscr{F} = H_1\Psi_2 + H_2\Psi_1 + \varepsilon H_2 \Psi_2$. Finally, we prove \eqref{eq:eta_ab_bd} by combining \eqref{F-def},  \eqref{eq:eta_bdF}, and \eqref{eq:F_ab_bd}.

\end{proof}

Proposition \ref{lem:bding_ab} provides upper bounds for $\eta^\varepsilon$, the expansion corrector, in terms of the Sobolev norms of $\alpha(T,X)$ and $\beta(T,X)$, which satisfy the Dirac equations \eqref{eq:diracA} and \eqref{eq:dirac2}.  We now turn to estimating these norms. 
\begin{lemma}\label{lem:dirac_bds}
Let $\alpha$ satisfy and $\beta$ denote solutions of homogeneous and non-homogeneous Dirac equations
 \eqref{eq:diracA} and \eqref{eq:dirac2}. As initial data we take
 $\alpha(0,\cdot)=\alpha_0 \in H^4(\mathbb{R}^2;\mathbb{C}^2)$ and $ \beta(0,x) \equiv 0$.
Then, for all $T>0$ and $s\in \mathbb{N}$, 
\begin{align}
\|\alpha(T,\cdot ) \|_{H^s} &= \|\alpha(0,\cdot) \|_{H^s} \, , \label{eq:alpha_hs_bd} \\
\|\beta(T,\cdot )\|_{H^s} &\lesssim T \|\alpha(0, \cdot)\|_{H^{s+2}} \, . \label{eq:beta_hs_bd}
\end{align}
\end{lemma}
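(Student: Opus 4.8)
The plan is as follows. Estimate \eqref{eq:alpha_hs_bd} is simply the $H^s$-conservation already recorded in \eqref{unitarity}, and I would obtain it by the usual energy argument: since $\slashed{D}_A(T)$ is invariant under spatial translations it commutes with every $\partial_X^\gamma$, so $\partial_X^\gamma\alpha$ solves \eqref{eq:diracA} with data $\partial_X^\gamma\alpha_0$; self-adjointness of $\slashed{D}_A(T)$ then forces $\frac{d}{dT}\|\partial_X^\gamma\alpha(T,\cdot)\|_{L^2}^2=0$, and summing over $|\gamma|\le s$ gives \eqref{eq:alpha_hs_bd}.

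For \eqref{eq:beta_hs_bd} the strategy is Duhamel against the homogeneous flow. Let $\mathcal{U}(T,\sigma)$ denote the solution operator of $i\partial_T w=\slashed{D}_A(T)w$ from time $\sigma$ to time $T$; by the same translation-invariance argument it is unitary on $H^s(\R^2;\C^2)$ for every $s\in\mathbb{N}$. Since $\beta(0,\cdot)=0$, the variation-of-constants formula applied to \eqref{eq:dirac2} gives $\beta(T,\cdot)=-i\int_0^T\mathcal{U}(T,\sigma)F_2(\sigma,\cdot)\,d\sigma$, hence $\|\beta(T,\cdot)\|_{H^s}\le\int_0^T\|F_2(\sigma,\cdot)\|_{H^s}\,d\sigma$. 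So everything reduces to the uniform-in-$\sigma$ bound $\|F_2(\sigma,\cdot)\|_{H^s(\R^2)}\lesssim\|\alpha(\sigma,\cdot)\|_{H^{s+2}(\R^2)}$; once that is in hand, \eqref{eq:alpha_hs_bd} turns the right-hand side into $\|\alpha_0\|_{H^{s+2}}$, and integrating in $\sigma$ over $[0,T]$ produces the factor $T$ in \eqref{eq:beta_hs_bd}.

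To prove the bound on $F_2$ I would argue exactly as in the proof of Proposition \ref{lem:bding_ab}. Recall that $F_{2,j}=\langle\Phi_j,\,H_1(E_D-H^0)^{-1}H_1\Psi_0\rangle_{L^2(\Omega)}$ with $H_1,H_2$ as in \eqref{H12} and $\Psi_0=e^{-iE_Dt}\sum_k\alpha_k(T,X)\Phi_k$; the Dirac constraint makes $H_1\Psi_0$ lie in the orthogonal complement of $\mathrm{span}\{\Phi_1,\Phi_2\}$ in $L^2_\bK$, where $(E_D-H^0)^{-1}$ is bounded. Expanding and using $i\partial_T\alpha=\slashed{D}_A(T)\alpha$ --- hence $\partial_T^2\alpha=-\slashed{D}_A(T)^2\alpha-i\big(\partial_T\slashed{D}_A(T)\big)\alpha$ --- to trade every time derivative of $\alpha$ for a first-order $X$-differential operator applied to $\alpha$, and expanding $(E_D-H^0)^{-1}$ restricted to $\mathrm{span}\{\Phi_1,\Phi_2\}^\perp$ in the Bloch basis $\{\Phi_b\}_{b\ge3}$, one checks that $F_2(\sigma,X)$ is a differential operator in $X$ of order at most two applied to $\alpha(\sigma,\cdot)$, whose coefficients are (i) absolutely convergent resolvent-weighted Bloch sums $\sum_{b\ge3}(E_b-E_D)^{-1}\langle\Phi_b,\partial_x^r\Phi_k\rangle\Phi_b$, convergent by the rapid-decay estimate $|\langle\Phi_b,\partial_x^r\Phi_k\rangle|\lesssim_N b^{-N}$ together with the Weyl bound $\|\Phi_b\|_{L^\infty}\lesssim b$ used there, and (ii) bounded functions of $A(\sigma)$ and $A'(\sigma)$, uniformly bounded in $\sigma$ by $\Tper$-periodicity of the forcing. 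This yields $\|F_2(\sigma,\cdot)\|_{H^s}\lesssim\|\alpha(\sigma,\cdot)\|_{H^{s+2}}$ with implicit constant depending only on $V$, $A$ and $s$, as required.

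The only genuinely delicate point is this last step: one must track precisely how many $X$-derivatives are produced when the outer $H_1$ acts on $(E_D-H^0)^{-1}H_1\Psi_0$ after the Dirac equation has been substituted --- the worst term, from $i\partial_T$ landing on $\slashed{D}_A(T)\alpha$, costs two derivatives and is what pins down the index $s+2$ --- and one must verify convergence of the resolvent-weighted Bloch sums. Both are routine variants of the computations already carried out for \eqref{eq:psi1_bd_ab}--\eqref{eq:H2psi2_bd_ab} in the proof of Proposition \ref{lem:bding_ab}; the rest of the argument is immediate.
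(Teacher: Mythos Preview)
Your proposal is correct and follows essentially the same approach as the paper. The only cosmetic difference is that for \eqref{eq:beta_hs_bd} the paper uses the direct energy inequality $\partial_T\|\partial_X^s\beta\|_2\le\|\partial_X^s F_2\|_2$ rather than Duhamel's formula, but both reduce immediately to the same key bound $\|F_2(\sigma,\cdot)\|_{H^s}\lesssim\|\alpha_0\|_{H^{s+2}}$, which you justify in the same way the paper does (by reference to the computations in Proposition~\ref{lem:bding_ab}).
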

\begin{proof}  The conservation law  \eqref{eq:alpha_hs_bd} follows from unitarity and translation invariance; see \eqref{unitarity}. To obtain \eqref{eq:beta_hs_bd}, we have from \eqref{eq:dirac2}, in $L^2(\mathbb{R}^2)$,  that
\begin{equation}
i\partial_T \|\partial_X^s \beta ( T, \cdot) \|_2 ^2 = 2~{\rm Im}\langle \beta ( T, \cdot) , \partial_X^{2s}F_2 \rangle =(-1)^s \cdot 2~{\rm Im}(\langle \partial_X^{s} \beta( T, \cdot)  , \partial_X^s F_2 \rangle) \, .
\end{equation} 
Therefore, by the Cauchy-Schwarz inequality, 
$\partial_T \|\partial_X^s \beta ( T, \cdot) \|_2 \leq \|F_2 (T, \cdot)\|_{H^s (\R ^2)} \, .
$
Finally, we bound the $H^s(\mathbb{R}^2)$ norm of $F_2$ .
Since, by definition
$$F_{2,j} = \langle \Phi_j, H_1 (E_D-H^0)^{-1}H_1\Psi_0 \rangle_{L^2_x(\Omega)} \, ,$$
we, as in Lemma \ref{lem:bding_ab}, using \eqref{eq:alpha_hs_bd} that  $\|F_2 (T, \cdot)\|_{H^s_x} \lesssim \|\alpha  (T, \cdot)\|_{H^{s+2}_X} = \|\alpha_0 \|_{H^{s+2}_X}$. 
Thus, $$\|\partial_X^s \beta \|_{L^2(\R ^2)} \lesssim \int\limits_{0}^{\top} \|F_2 (T', \cdot ) \|_{H^{s+2}_X} \, dT' \lesssim T\|\alpha_0 \|_{H^{s+2}_X},$$
which proves \eqref{eq:beta_hs_bd}.
\end{proof}

We now complete the proof of Theorem \ref{thm:valid}. With the notation and definitions introduced above and \eqref{eq:multi_scale}
our solution of \eqref{eq:lsA} is:
\[ \psi^\varepsilon(t, \bx) = \Psi^\varepsilon_0 (t,\bx)  + \varepsilon\Psi^\varepsilon_1 (t,\bx) + \varepsilon^2 \Psi^\varepsilon_2(t,\bx) + \eta^\varepsilon(t,\bx). \]
We shall estimate the size of the corrector to the leading  order (effective Dirac) approximation:
$$\psi(t,\bx) - \Psi^\varepsilon_0 (t,\bx)= \varepsilon\Psi^\varepsilon_1 (t,\bx) + \varepsilon^2 \Psi^\varepsilon_2(t,\bx) + \eta^\varepsilon(t,\bx) .$$
Using Lemmas \ref{lem:bding_ab} and \ref{lem:dirac_bds} we have that
\begin{align*}
\|\psi^\varepsilon(t,\bx)- \Psi_0 (t,\cdot)\|_2  &\lesssim \varepsilon\|\Psi_1 (t,\cdot)\|_2 + \varepsilon^2 \|\Psi_2 (t,\cdot )\|_2 + \|\eta(t,\cdot)\|_2 \\
&\lesssim \varepsilon  \underbrace{(\|\alpha (\varepsilon t, \cdot)\|_{H^1} + \|\beta(\varepsilon t,\cdot)\|_2)}_{ \Psi_1} + \varepsilon^2\underbrace{(\|\alpha(\varepsilon t, \cdot)\|_{H^1} +\|\beta(\varepsilon t,\cdot)\|_2)}_{ \Psi_2} +\underbrace{t\varepsilon^3 (\|\alpha(\varepsilon t, \cdot) \|_{H^3} + \|\beta(\varepsilon t,\cdot)\|_{H^2})}_{\eta} \\
&\lesssim \varepsilon  \underbrace{(\|\alpha_0\|_{H^1} + \|\beta(\varepsilon t,\cdot)\|_2)}_{ \Psi_1} + \varepsilon^2\underbrace{(\|\alpha_0\|_{H^1} +\|\beta(\varepsilon t,\cdot)\|_2)}_{ \Psi_2} +\underbrace{t\varepsilon^3 (\|\alpha_0 \|_{H^3} + \|\beta(\varepsilon t,\cdot)\|_{H^2})}_{\eta} \\
&\lesssim \varepsilon \left( \|\alpha_0\|_{H^1} + \varepsilon t \|\alpha_0 \|_{H^2} \right) +\varepsilon^2 \left( \|\alpha_0 \|_{H^1} + \varepsilon t \|\alpha_0 \|_{H^2} \right) + t\varepsilon^3 \left(\|\alpha_0 \|_{H^3} +t\varepsilon \|\alpha _0\|_{H^4} \right) \\
&\lesssim \varepsilon \|\alpha_0 \|_{H^1} + \varepsilon^2 t \|\alpha_0 \|_{H^2} +t\varepsilon ^3 \|\alpha_0 \|_{H^3} + t^2\varepsilon ^4 \|\alpha_0 \|_{H^4} \, .
\end{align*}
Therefore, for any $\rho>0$ and $\varepsilon$ sufficiently small,  $\sup_{0\leq t\lesssim \varepsilon^{-(2-\rho)}} \|\psi^\varepsilon(t,\bx)- \Psi_0 (t,\cdot)\|_2 \lesssim \varepsilon^{\rho}$. This completes the proof of Theorem \ref{thm:valid}.

\section{Proof of Proposition \ref{prop:wp_dk}}\label{sec:proj}
\subsection*{From projections to wave-packets; proof of \eqref{wp_dk1}}


Let $\varepsilon >0$ be taken sufficiently small, and let $f\in L^2(\mathbb{R}^2)$. Express $H$ acting in $ L^2(\mathbb{R}^2)$ as a direct integral $H = \int^\oplus_{\mathcal{B}} H_k \, d\bk$,  where  $H_\bk $ denotes the operator $H = -\Delta +V$ acting in $ L^2_\bk$.   Then under the no-fold condition (Definition \ref{def:nofold}) there exists a constant $a>0$, which depends on $v_{\rm D}$, such that
\begin{align*}
{\rm Proj}_{L^2(\mathbb{R}^2)}(|H- E_D|<\varepsilon) f &=  {\rev \sum\limits_{\bk_{\rm D} =\bK, \bK'} }\int\limits_{\mathcal{B}} \, d\bk \  \chi \left(\frac{|\bk-\bk _D|}{\varepsilon}<a\right) {\rm Proj}_{L^2_\bk}(|H _\bk -E_D|<2\varepsilon) \, f  \\
&= {\rev  \sum\limits_{\bk_{\rm D} =\bK, \bK'}}\int\limits_{\mathcal{B}} \, d\bk \,  \chi \left(\frac{|\bk-\bk _D|}{\varepsilon}<a\right) \left[\frac{1}{2\pi i}\oint\limits_{|\zeta - E_D|=2\varepsilon} \,   (\zeta I   - H_\bk)^{-1}\ d\zeta \right] \, f  \, . \numberthis \label{eq:proj_cont} \\
\end{align*}
\noindent
{\rev We next expand the  terms in \eqref{eq:proj_cont} for $\bk$ near  $\bk_D$, focusing on the $\bk_{\rm D}=\bK$ term; he $\bk_{\rm D}=\bK'$ term is treated analogously.} In order to expand for $\bk$ near $\bK$,  we next express the operators $H_\bk$ in terms of operators which acts in the fixed space $L^2_{\bk _D}$.
Note that  $H_\bk = e^{i\bk \cdot  \bx} H(\bk) e^{-i\bk \cdot  \bx}$, where $H(\bk)\equiv -(\nabla +i\bk)^2 + V$  acts in $L^2(\R^2/\Lambda)$.
Furthermore, $ (\zeta I  - H_\bk)^{-1}=e^{i\bk \cdot  \bx}(\zeta I- H(\bk) )^{-1} e^{-i\bk \cdot  \bx}  $.

Substitution into \eqref{eq:proj_cont} yields
\begin{align*}
\cdots
&=  \int\limits_{\mathcal{B}} \, d\bk \,  e^{i\bk \cdot  \bx}\chi \left(\frac{|\bk-\bK|}{\varepsilon}<a\right) \left[\frac{1}{2\pi i}\oint\limits_{|\zeta - E_D|=2\varepsilon}\  (\zeta I - H(\bk))^{-1}\ \, d\zeta \right] \, e^{-i\bk \cdot  \bx}f  \\
\qquad &=  \int\limits_{\mathcal{B}} \, d\kappa  \,  e^{i(\bK+\kappa )\cdot \bx}\chi \left(\frac{\kappa}{\varepsilon} <a\right) \left[\frac{1}{2\pi i}\oint\limits_{|\zeta - E_D|=2\varepsilon} \,  (\zeta I   - H(\bK+\kappa ))^{-1}\ d\zeta \,  \right] \, e^{-i(\bK+\kappa)\cdot \bx}f   \, .
\end{align*}
The contour integral inside the square brackets is smooth $L^2(\R^2/\Lambda)$-valued function of $\kappa$, and so by Taylor expansion:
\begin{align}
\cdots  = & \int\limits_{\mathcal{B}} \, d\kappa  \,  e^{i(\bK+\kappa )\cdot \bx}\chi \left(\frac{|\kappa|}{\varepsilon} <a\right) \left[\frac{1}{2\pi i}\oint\limits_{|\zeta - E_D|=2\varepsilon} \,  (\zeta I  - H(\bK))^{-1}  \, d\zeta \right] \, e^{-i(\bK+\kappa)\cdot \bx}f  \nonumber\\
&+  \int\limits_{\mathcal{B}} \, \chi \left(\frac{\kappa}{\varepsilon} <a\right) \ \kappa\ \mathcal{\rm Error}[f;\kappa]\ d\kappa  \, .
\label{texp}\end{align}
The last term in \eqref{texp} is linear in $f$ and easily seen to be bounded in $L^2(\R^2)$ by $\varepsilon^3 \|f\|_{L^2}$ since the domain of integration
 is over a disc of radius $\varepsilon$.  
 
The dominant term in \eqref{texp} may be re-expressed as
 \begin{align*}
 & \int\limits_{\mathcal{B}} \, d\kappa  \,  \chi \left(\frac{|\kappa|}{\varepsilon} <a\right) 
e^{i\kappa\cdot \bx}\left[\frac{1}{2\pi i}\oint\limits_{|\zeta - E_D|=2\varepsilon} \,  e^{i\bK\cdot \bx}(\zeta I  - H(\bK))^{-1}e^{-i\bK\cdot \bx}  \, d\zeta \right] \, e^{-i\kappa\cdot \bx}f (\bx)\nonumber\\
\quad  = & \int\limits_{\mathcal{B}} \, d\kappa  \,  \chi \left(\frac{|\kappa|}{\varepsilon} <a\right) 
e^{i\kappa\cdot \bx}\left[\frac{1}{2\pi i}\oint\limits_{|\zeta - E_D|=2\varepsilon} \,  
(\zeta I  - H_{\bK})^{-1}  \, d\zeta \right] \, e^{-i\kappa\cdot \bx}f(\bx) \nonumber\\
= &\int\limits_{\mathcal{B}} \, d\kappa  \,  \chi \left(\frac{|\kappa|}{\varepsilon} <a\right) 
e^{i\kappa\cdot \bx}\ 
 {\rm Proj}_{ L^2_{\bK} }(|H _{\bK} -E_D|<2\varepsilon) \ e^{-i\kappa\cdot \bx}f(\bx)\\
&= \int\limits_{\mathcal{B}}\ d\kappa\  \chi\left(\frac{|\bkappa|}{\varepsilon} < a\right)\ e^{i\kappa \cdot \bx}\ \Phi^{\top}(\bx; \bK) \left[\int\limits_{\mathbb{R}^2} \, d\by \, \overline{\Phi (\by;\bK)}f(\by)e^{-i\kappa \cdot \by} \right]   \nonumber\\
&= \Phi^{\top}(\bx;\bK) \int\limits_{\mathbb{R}^2} \, d\by \, \overline{\Phi(\by;\bK)} (\by)f(\by)  \left[ \int\limits_{\mathcal{B}} \, d\kappa \,  \chi\left(\frac{|\bkappa|}{\varepsilon} < a\right)e^{i \bkappa\cdot  (\bx-\by)} \right]  \nonumber \\
&= \Phi^{\top} (\bx;\bK)\int\limits_{\mathbb{R}^2} \, d\by \, \overline{\Phi (\by;\bK)}f(\by)  \left[ \int\limits_{\mathbb{R}^2} \, d\kappa \,  \chi\left(\frac{|\bkappa|}{\varepsilon} < a\right)e^{i\bkappa \cdot (\bx-\by)} \right]  \nonumber 
\end{align*}

Adding the analogous $\bK'$ term from \eqref{eq:proj_cont}, we have that 
 \begin{align}
 {\rm Proj}_{L^2(\mathbb{R}^2)}(|H - E_D|<\varepsilon) f  &= {\rev \sum\limits_{\bk_{\rm D}  = \bK, \bK'}} {\rev u_\varepsilon^{\bk_{\rm D}}[f]}+ \mathcal{O}_{L^2(\R^2)}\left(\varepsilon^3 \|f\|_{L^2}\right),\quad {\rm where}
 \label{texp1} \end{align}
  \begin{align}
 u_\varepsilon^{\bk_{\rm D}}[f](\bx) &\equiv  \Phi^{\top}(\bx;\bk_{\rm D}) \beta_\varepsilon^{\bk_{\rm D}} [f](\bx),\quad {\rm and}  \nonumber\\
 \beta^\varepsilon_{\bk_{\rm D}}[f](\bx) &\equiv \left[\ \left( \overline{\Phi(\cdot;\bk_{\rm D})} f \right) \ast \mathcal{F}_{\bxi}^{-1}\left[ \chi \left( \frac{|\bxi|}{\varepsilon}<a \right)\right]\ \right](\bx) \, ,
 \label{u-beta}
\end{align}
where $\mathcal{F}^{-1}[g](\bxi)$ denotes the inverse Fourier transform, {\rev $\bk_{\rm D} =\bK, \bK'$}, and $\ast$ denotes convolution. We next show that $u_\varepsilon^{\bk_{\rm D}}[f]\in {\rm BL}(a,\varepsilon; \bk_{\rm D})$ by showing that 
$\mathcal{F}[\ \beta_\varepsilon^{\bk_{\rm D}}[f]\ ] \in\chi (|\bxi|< a\varepsilon)L^2_{\bxi } $.
 Indeed,  \[
\mathcal{F}[ \beta^\varepsilon_{f,\bk_{\rm D}}](\bxi) = \mathcal{F}\left[\left( \overline{\Phi(\cdot ;\bk_{\rm D})} f \right) \ast \mathcal{F}^{-1}\left[ \chi \left( \frac{|\bxi|}{\varepsilon}<a \right)\right]\right] = \mathcal{F}[\overline{\Phi(\cdot ;\bk_{\rm D})} f](\bxi) \cdot\chi\left(\frac{|\bxi|}{\varepsilon} <a \right) ,
\]
which is supported in $\{|\bxi|<\varepsilon a\}$. This completes the proof of \eqref{wp_dk1}.
\subsection*{From wave-packets to projections; proof of \eqref{wp_dk2}}
{\rev Assume,  without loss of generality, that 
$u(\bx)\in {\rm BL}(d_0,\varepsilon;\bK)$ for some $d_0, \varepsilon > 0$, i.e., that  
\[ u(\bx)=~\Phi^{\top}(\bx;\bK)\alpha_\varepsilon (\bx),\quad \textrm{where}\quad 
\widehat{\alpha_\varepsilon}(\bxi) = \chi\left(\frac{|\bxi|}{\varepsilon}<a\right)\widehat{\alpha_\varepsilon}(\xi).\]
 Then by \eqref{texp1}, there exists $a>0$ and such that for $\varepsilon>0$ sufficiently small we have that 
\begin{equation}\label{eq:Proj_BL}
{\rm Proj}_{L^2(\mathbb{R}^2)} (|H^0-E_D|\leq 
\varepsilon)u =  \sum\limits_{\bk_{\rm D}=\bK, \bK'}\Phi^{\top}(\bx, \bk_{\rm D})\gamma_{\varepsilon,\bk_{\rm D}} (\bx) + \mathcal{O}(\varepsilon^3\|u\|_{L^2(\mathbb{R}^2)}) \, .
\end{equation}
To prove \eqref{wp_dk2} it suffices to show that 
$\gamma_{\varepsilon,\bK}(\bx)= \alpha_\varepsilon(\bx)$ and $\gamma_{\varepsilon,\bK^\prime}(\bx)=0$. 
Substitution of 
 $u=\Phi^{\top}(\bx;\bK)\alpha_{\varepsilon}(\bx)$ into \eqref{u-beta} yields
$$ \gamma_{\varepsilon,\bk_{\rm D}} = 
\left(\overline{\Phi(\cdot;\bk_{\rm D})}\Phi^{\top}(\cdot;\bK)\alpha_\varepsilon \right) \ast\mathcal{F}^{-1}\left[\chi\left(\frac{|\bxi|}{\varepsilon}<a \right)\right] \, , \qquad \bk_{\rm D} = \bK, \bK' \, . $$
We next  compute the Fourier transform of $\gamma_{\varepsilon,\bK_{\rm D}}$. For $j=1,2$, and $\bk_{\rm D} = \bK, \bK'$
\begin{align}
    \mathcal{F}[ \gamma_{\varepsilon,\bk_{\rm D}}]_j &=\mathcal{F}[\overline{\Phi(\bx; \bk_{\rm D})}\Phi^{\top}(\bx; \bK)\alpha ( \bx) ]_j ~ \cdot \chi\left(\frac{|\bxi|}{\varepsilon}<a \right) \nonumber\\
    &= \sum\limits_{\ell =1,2} \mathcal{F}[\overline{\Phi_j (\bx;\bk_{\rm D})}\Phi_{\ell}(\bx; \bK) \alpha_{\varepsilon, \ell}(\bx)] ~ \cdot\chi\left(\frac{|\bxi|}{\varepsilon}<a \right) \label{eq:gammaFour_terms} 
\end{align}
Consider the expression being summed in \eqref{eq:gammaFour_terms}. Since $\Phi_j(\bx;\bk_D)=e^{i\bk_D\cdot\bx}\phi_j(\bx;\bk_D)$ with
 $\phi_j(\bx;\bk_D)\in L^2(\R^2/\Lambda)$ periodic, we have
  \begin{align*}
 \overline{\Phi_j(\bx, \bk_D)}\Phi_{\ell}(\bx, \bK) \alpha_{\varepsilon, \ell}^\varepsilon(\bx) 
= p_{j,\ell}(x)q_{ \varepsilon, \ell}(x)\, ,
 \end{align*}
 where $p_{j,\ell}(\bx)=\overline{\phi_j(\bx, \bk_D)}\phi_{\ell}(\bx, \bK) \in L^2(\R^2/\Lambda)$ and
$ q_{\varepsilon,\ell}(\bx,\xi) = \alpha_{\varepsilon, \ell}(\bx) e^{i\bx\cdot (\bK-\bk_D)}$. Expanding $p_{j,\ell}(\bx)$ in a  Fourier series $p_{j,\ell}(x) = \sum_{\bn \in \Lambda^{*}} \hat{p}_{j,\ell}(\bn) e^{i\bn \cdot \bx}$ and substituting in \eqref{eq:gammaFour_terms} yields
\begin{align*}
\mathcal{F}[\gamma_{\varepsilon,\bk _{\rm D}}]_j &= \sum\limits_{\ell = 1,2}\mathcal{F} \left[ \sum\limits_{\bn \in \Lambda^*} \hat{p}_{j,\ell}(\bn)e^{i\bn \cdot x} q_{\varepsilon,\ell }(x) \right]~ \cdot\chi\left(\frac{|\bxi|}{\varepsilon}<a \right)\\
&= \sum\limits_{\ell = 1,2}\sum\limits_{\bn \in \Lambda^*} \hat{p}_{j,\ell}(\bn) \mathcal{F} \left[  e^{i\bn \cdot x} q_{\varepsilon, \ell}(x) \right] ~ \cdot\chi\left(\frac{|\bxi|}{\varepsilon}<a \right)\\
&= \sum\limits_{\ell = 1,2}\sum\limits_{\bn \in \Lambda^*} \hat{p}_{j,\ell}(\bn) \hat{\alpha}_{\varepsilon, \ell} \left(\bxi - (\bK-\bk_{\rm D} + \bn)\right) ~ \cdot\chi\left(\frac{|\bxi|}{\varepsilon}<a \right) \numberthis \label{eq:gamma_sum} \\
\end{align*}
Note that by definition, $\hat{\alpha_{\varepsilon}}$ has compact support in the disc of radius $\varepsilon a $ around the origin. In the expansion above in \eqref{eq:gamma_sum}, consider first the case where $\bk_{\rm D}=\bK'$. Note that $\bK-\bk_{\rm D} = \bK-\bK' = 2\bK$ is not in the dual lattice $\Lambda ^*$. A term in the expression \eqref{eq:gamma_sum} is non-vanishing only if (i) $|\bxi-2\bK+\bn|<\varepsilon a$ (with $\bn\in\Lambda^*$) and 
 (ii) $|\bxi|<\varepsilon a$. For any $\bn\in\Lambda^*$ such that (i) and (ii) hold, we have  $|\bn-2\bK|-\varepsilon a \le |\bxi-2\bK+\bn|<\varepsilon a$. For $\varepsilon$ positive and sufficiently small, this implies that there are no such
   $\bn\in\Lambda^*$ because ${\rm dist}(2\bK,\Lambda^*)>0$. Hence, $\widehat{\gamma _{\varepsilon,\bK^\prime}}(\bxi)\equiv 0$ and therefore $\gamma _{\varepsilon,\bK^\prime}(\bx)\equiv 0$.

Next, consider the case $\bk_{\rm D}=\bK$. By similar reasoning, the only non-zero term in \eqref{eq:gamma_sum} arises from the lattice point $\bn = (0,0)$. Hence, 
\begin{align*}
\mathcal{F}[\gamma_{\varepsilon,\bK}]_j &= \sum\limits_{\ell = 1,2} \hat{p}_{j,\ell}({\bf 0}) \hat{\alpha}_{\varepsilon, \ell} \left(\bxi\right) ~ \cdot\chi\left(\frac{|\bxi|}{\varepsilon}<a \right) = \sum\limits_{\ell = 1,2} \int_\Omega \overline{\Phi_j(\by;\bK)}\Phi_l(\by;\bK) d\by\
 \hat{\alpha}_{\varepsilon, \ell}\left(\bxi\right)~ \cdot\chi\left(\frac{|\bxi|}{\varepsilon}<a \right)\\
 &= \hat{\alpha}_{\varepsilon, j}\left(\bxi\right)~ \cdot\chi\left(\frac{|\bxi|}{\varepsilon}<a \right)
 \end{align*}
 
 Summarizing, we have $\gamma _{\varepsilon,\bK}(\bx)\equiv \alpha_\varepsilon(\bx)$ 
  and $\gamma _{\varepsilon,\bK^\prime}(\bx)\equiv 0$. Substitution into \eqref{eq:Proj_BL} and recalling that 
   $u(\bx)= \Phi(\bx;\bK)^\top\alpha_\varepsilon(\bx)$ yields
   \begin{equation}\label{eq:Proj_BL1}
{\rm Proj}_{L^2(\mathbb{R}^2)} (|H^0-E_D|\leq 
\varepsilon)u = u(\bx) + \mathcal{O}(\varepsilon^3\|u\|_{L^2(\mathbb{R}^2)}) \, .
\end{equation}
This is equivalent to \eqref{wp_dk2}. The proof of Proposition \ref{prop:wp_dk} is now complete. }

\section{ Spectrum of the effective Dirac monodromy operator, ${\rm spec}(M_{\rm Dir})=S^1$;  Remark  \ref{rem:BL} }\label{sec:wkb}
Since the Dirac Hamiltonian $\slashed{D}(T)$, see \eqref{eq:diracA}, defines a flow which is unitary in $L^2(\R^2;\C^2)$, the Floquet multipliers
 (eigenvalues of the monodromy operator) must all lie on the unit circle, $S^1$. In this section we justify the discussion in Remark \ref{rem:BL} and prove that the spectrum of the monodromy operator 
associated with $\slashed{D}(T)$ is equal to $S^1$. To prove this, it suffices to show that the Floquet multipliers
 associated with $\widehat{\slashed{D}}(T;\bxi)$, see \eqref{eq:dirac_xi}, cover the unit circle as $|\bxi|$ varies outside of a sufficiently large circle in $\R^2_\xi$. 
 Specifically, we shall demonstrate this for $\bxi=(\xi,0)$ and $\xi\gg1$. As opposed to the analysis in Section \ref{sec:Dirac}, here we do not restrict $A(T)$ to the form \eqref{eq:A_circ}, but rather to any periodic and differentiable $A(T)$ with $\int_0^{\Tper} A_j (T) \, dT = 0$ for $j=1,2$.
Consider the one-parameter family of periodically forced systems;
\begin{equation}
i\partial_T \hat{\alpha}(T;\xi) = \widehat{\slashed{D}}(T;\bxi)\hat{\alpha}(T;\xi) = \left[\xi\sigma _1 + \usigma \cdot A(T)  \right]\hat{\alpha}(T;\xi) ,
\label{DFloq}\end{equation}
where $\usigma \cdot A = A_1\sigma_1 -A_2 \sigma_2$.   
In order to construct the monodromy matrix, we first construct, for $|\xi|\gg1$ a basis of solutions 
via a  WKB-type expansion. 
Set 
\begin{equation}\label{eq:wkbans}
\hat{\alpha}(T,\xi) = e^{-i\xi S_0(T)}\ B(T,\xi) \, ,
\end{equation} 
where $S_0(T)$ and $B(T,\xi)$ are to be determined.
Substitution into \eqref{DFloq} yields
\begin{equation}\label{eq:Dirac_SB}
\xi S_0^\prime(T)B(T,\xi) + i\partial_T B(T,\xi) = \left[ \xi\sigma_1 +\usigma \cdot A\right]B(T,\xi) \, .
\end{equation}
We expand $B(T,\xi)$ in powers of the small parameter $\xi^{-1}$:
\begin{align}
B(T,\xi) &= B_0(T) + \xi ^{-1} B_1 (T) + \xi ^{-2}B_2(T;\xi) \, , \label{eq:B_expansion} 
\end{align}
We next formally obtain equations for $B_0(T)$, $B_1(T)$ by equating terms of order $\xi$ and order $1$,
 and an equation for the corrector, $B_2(T,\xi)$ by balancing the remaining terms. This yields:
 
\begin{align}
\mathcal{O}(\xi^1):\quad \left(S_0^\prime(T) I\ -\ \sigma_1\right) B_0(T)  &= 0 \, , \label{eq:WKBo1}\\
\mathcal{O}(\xi^0):\quad \left(S_0^\prime(T) I\ -\ \sigma_1\right) B_1(T)  &= \usigma \cdot A(T) B_0(T) - i\partial_T B_0(T) \, , \label{eq:WKBo0}
\end{align}
and the equation for the corrector, $B_2(T,\xi)$:
\begin{equation}
i\partial_T B_2 + \xi \left(S_0^\prime(T)-\sigma_1\right) B_2 - \usigma \cdot A(T) B_2
= \xi\left(\ -i\partial_T + \usigma \cdot A(T) \right) B_1 \, . \label{B2}
\end{equation}
We next solve \eqref{eq:WKBo1}-\eqref{B2}. Equation  \eqref{eq:WKBo1} has a non-trivial solution $S_0^\prime(T)$ is an eigenvalue of~$\sigma_1$. 
Hence, we have the two linearly independent solutions
 \begin{align}
 S_{0+}^\prime(T)  &= +1,\qquad B_0(T)=c(T)\bw_+,\quad \bw_{+} = \frac{1}{\sqrt2}\begin{pmatrix}1\\ 1\end{pmatrix} \, ,
 \label{S0+}\\
  S_{0-}^\prime(T)  &= -1,\qquad B_0(T)=c(T)\bw_-,\quad \bw_{-} = \frac{1}{\sqrt2}\begin{pmatrix}1\\ -1\end{pmatrix} \, .
\label{S0-}    \end{align}
    We let $S_{0}^\prime(T)=\lambda$ denote either of these eigenvalues and $\bw_\lambda$ denote the corresponding normalized eigenvector.
  For this choice of eigenpair, \eqref{eq:WKBo0} becomes:
  \begin{equation}
\left(\lambda I\ -\ \sigma_1\right) B_1(T)  = \left(\ \usigma \cdot A(T) c(T) - i\partial_T c(T)\ \right)\bw_\lambda
 \label{B1}\end{equation}
Equation \eqref{B1} is solvable if and only if 
\[ \left\langle \bw_\lambda\ ,\ \left(\ \usigma \cdot A(T) c(T) - i\partial_T c(T)\ \right)\bw_\lambda\right\rangle=0\]
 or equivalently 
 \begin{equation} i\partial_T c(T) = \left\langle \bw_\lambda, \usigma \cdot A(T) \bw_\lambda\right\rangle c(T) .\label{c-eqn}\end{equation}
 This can be further simplified. We take $c(0)=1$ and assume $A_j(T)$ has zero mean on $[0,\Tper]$ for $j=1,2$. Note that $\usigma \cdot A \bw_\lambda = A_1\sigma_1\bw_\lambda - A_2\sigma_2\bw_\lambda$ and since $\sigma_2\bw_\lambda$ is orthogonal to $\bw_\lambda$, we obtain
 \begin{equation}
  i\partial_T c(T) = \left\langle \bw_\lambda,\sigma_1\bw_\lambda\right\rangle A_1(T) c(T) ,\quad 
  c(T) = \exp\left(-i\left\langle \bw_\lambda,\sigma_1\bw_\lambda\right\rangle \int_0^TA_1(s) ds\right)\ ,
  \label{cofT}
  \end{equation}
 where $\left\langle \bw_\lambda\ ,\sigma_1\bw_\lambda\right\rangle= \lambda $, where $\lambda=+1$ or $-1$.
 
 Next, using $B_0(T)=c(T)\bw_\lambda$ and \eqref{c-eqn},  \eqref{eq:o0} becomes
 \[ \left(\lambda I\ -\ \sigma_1\right) B_1(T)  = \left[ \usigma \cdot A(T) \bw_\lambda  - \left\langle \bw_\lambda, \usigma \cdot A(T) \bw_\lambda\right\rangle \bw_\lambda \right] c(T)  = \pi_\lambda^\perp\left[ \usigma \cdot A(T) \bw_\lambda \right] c(T) , 
 \]
 where $\pi_\lambda^\perp {\bf u}$ projects ${\bf u}\in \C^2$ onto the subspace orthogonal to ${\rm span}\{\bw_\lambda\}$, e.g., in the case of $\lambda = 1$, it projects onto the span of $\bw _{-1}$.
 Hence, we have:
 \begin{equation}
  B_1(T) = \left(\lambda I\ -\ \sigma_1\right)^{-1}\pi_\lambda^\perp\left[ \usigma \cdot A(T) \bw_\lambda \right] c(T),
  \label{B1a}
  \end{equation}
  up to an element in the kernel of $\lambda I\ -\ \sigma_1$ which we set equal to zero, and where $c(T)$ is given by~\eqref{cofT}.
  
  Summarizing, for each of the two eigenvalues $\lambda=S_0^\prime$ of $\sigma_1$ ($\lambda=\pm1$) 
   with corresponding normalized eigenvectors, $\bw_\lambda$, we have constructed first two terms of an  expansion
   of  $\widehat{\alpha}(T,\xi)=~e^{-i\lambda \xi T} B(T,\xi)$ by determining $B_0(T)$ (equations \eqref{S0+},\eqref{S0-},\eqref{c-eqn}) and $B_1(T)$ (equation \eqref{B1a}). Our expansion reads:
\begin{equation} \widehat{\alpha}_\lambda(T,\xi) =  e^{i\lambda\xi}\left[ c(T)\bw_\lambda + \frac{ c(T)}{\xi} 
   \left(\lambda I\ -\ \sigma_1\right)^{-1}\pi_\lambda^\perp\left[ \usigma \cdot A(T) \bw_\lambda \right] +\frac{1}{\xi^2} B_2(T,\xi) \right]\label{expand1}
   \end{equation}
 Next we bound $B_2(T,\xi)$ from the ODE \eqref{B2}. For any $T_0$ fixed  we obtain:
    \begin{equation}\label{eq:B2_ubd}
|B_2(T,\xi)|  \lesssim |\xi| T_0 \sup\limits_{0\le s\leq T_0}(|A(s)| + |A^\prime(s)| ) \, .
\end{equation}
By \eqref{expand1} and \eqref{eq:B2_ubd}
we have, corresponding to $\lambda=\pm1$, the pair of linearly independent solutions of \eqref{DFloq} given by:
\begin{equation} \widehat{\alpha}_\pm(T,\xi) = e^{\mp i\xi T}\left( c_\pm(T)\bw_+ + \mathcal{O}_\pm(|\xi|^{-1}) \right).\label{expand2}
   \end{equation}

Introduce the $2\times2$ matrix solution of \eqref{DFloq}, whose columns are $\widehat{\alpha}_+(T,\xi)$ and $\widehat{\alpha}_-(T,\xi)$:
\begin{equation}
V(T,\xi)=\Big[\widehat{\alpha}_+\ \Big| \ \widehat{\alpha}_-\Big](T,\xi) = 
\Big[ e^{ -i\xi T} c_+(T)\bw_+ + \mathcal{O}(|\xi|^{-1})\ \Big|\   e^{ i\xi T} c_-(T)\bw_-+ \mathcal{O}(|\xi|^{-1})\Big]\ .
\label{VTxi}\end{equation}
 Note $V(T+\Tper,\xi)= V(T,\xi)V^{-1}(0,\xi)V(\Tper,\xi)$. Hence the monodromy matrix for \eqref{DFloq} is given by:
 \[ M_{\rm Dir}(\xi) = V^{-1}(0,\xi)V(\Tper,\xi)  \, ,\]
with the compressed notation $M_{\rm Dir}((\xi,0))= M_{\rm Dir}(\xi)$. Since, by assumption, $c_\pm(0)=1$ and $A_1$ has zero mean, we have $c_\pm(\Tper)=1$.  Using \eqref{VTxi} we obtain: 
 $$M_D(\xi) = V_0^{-1}(0,\xi)V_0(\Tper, \xi) +\mathcal{O}_{2\times2}(\xi^{-1})  \, , \quad {\rm where}\quad  
 V_0(\Tper;\xi) = \Big[ e^{ -i\xi \Tper}\bw_+ \  \Big| \  e^{i\xi \Tper}\bw_- \Big]\ . $$
The $\mathcal{O}_{2\times2}(|\xi|^{-1})$ correction is with respect to any norm on complex $2\times2$ matrices. 
 Since $(e^{-i\xi \Tper}\bw_+ \big| e^{i\xi \Tper} \bw_- )=(\bw_+ \big| \bw_- )\ {\rm diag}( e^{-i\xi \Tper}, e^{i\xi \Tper})$  we have
\begin{align*}V_0^{-1}(0,\xi)V_0(\Tper, \xi) &= \left(\bw_+ \Big| \bw_- \right)^{-1}\left(e^{-i\xi \Tper}\bw_+ \Big| e^{i\xi \Tper} \bw_- \right)  = \left( \begin{array}{ll}
e^{-i\xi \Tper} & 0 \\ 0 & e^{i\xi \Tper}
\end{array} \right) \, .
\end{align*}
Therefore, 
$$M_D(\xi) = \left( \begin{array}{ll}
e^{-i\xi \Tper} & 0 \\ 0 & e^{i\xi \Tper}
\end{array} \right)  +\mathcal{O}_{2\times2}(|\xi|^{-1}) , $$
whose eigenvalues, $e^{\pm i\mu(\xi)}\in S^1$, can be computed and expressed as: $e^{\pm i\mu(\xi)}\equiv e^{\mp i\xi\Tper}+ \nu_\pm(\xi)$, where $|\nu_\pm(\xi)|\le C|\xi|^{-1}$, 
with a constant $C$ which depends on bounds on $A(T)$ and $A^\prime(T)$.\footnote{This is a particular case of the general statement in standard matrix perturbation theory, see e.g., \cite[Chapter IV, Theorem 1.1]{stewart1990matrix}).}  The mapping $\xi\mapsto e^{i\mu(\xi)}$
  covers  $S^1$.  Indeed, for $\eta_0>0$ there exists $N_0$ large such that the image  of  the closed interval $[-\eta_0, 2\pi+\eta_0] + 2\pi N_0/\Tper$ under $e^{i\mu(\xi)}$ is  dense in $S^1$. Furthermore, by continuity, the image of this interval is closed
  and hence equal to $S^1$.
It follows that
  $$S^1=\cup_{|\xi|\ge\xi_0}{\rm spec}(M_{\rm Dir}(\xi)) \subseteq {\rm spec}(M_{\rm Dir})\subseteq S^1 \, .$$  Hence ${\rm spec}(M_{\rm Dir})=S^1$.
   This completes the justification of 
  \eqref{eq:Diracspec_infty} in Remark \ref{rem:BL}.

\appendix

\section{The Spectral Theorem for Unitary Operators}\label{ap:spectral}
We review here in short the basic elements and formulations of the spectral theory of unitary operators on Hilbert spaces, see details in e.g., \cite{hall2013quantum, taylor2013partial}. 

\begin{definition}[projection-valued measure]
Let $\mathcal{H}$ be a Hilbert space, let $X$ be a set, and $\Sigma$ a $\sigma$-algebra in $X$. A map $\Pi:\Sigma \to B(\mathcal{H})$, the Banach space of bounded linear operators on $\mathcal{H}$, is called a {\em projection-valued measure} if the following properties hold:
\begin{enumerate}
\item $\Pi(I)$ is an orthogonal projection for every $I\in \Sigma$.
\item $\Pi(\emptyset)=0$ and $\Pi(X) = {\rm Id}$.
\item If $\{I_j\}_{j\geq 1} \subset \Sigma$ are disjoint then $$\Pi \left( \bigcup\limits_{j\geq 1} I_j\right) v = \sum\limits_{j\geq 1} \Pi(I_j) v \, , \qquad v\in \mathcal{H} \, .$$
\item $\Pi (I_1 \cap I_2) = \Pi (I_1)\Pi(I_2)$ for all $I_1, I_2 \in \Sigma$.
\end{enumerate}
\end{definition}

 \begin{theorem}
Let $U$ be a unitary operator on $\mathcal{H}$. There exists a unique projection-valued measure $\Pi = \Pi(\cdot; U)$ on the Borel $\sigma$-algebra of $S^1$ such that 
$$\int\limits_{S^1} z \, d\Pi(z) = U \, .$$

\end{theorem}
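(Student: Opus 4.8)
This is the spectral theorem for unitary operators, and the plan is to construct $\Pi$ as the restriction to indicator functions of a Borel functional calculus for $U$, built up in the standard two stages. First one establishes a \emph{continuous} functional calculus. Since $U$ is unitary, $\|U\|=\|U^{-1}\|=1$, so $\sigma(U)$ is a compact subset of $S^1$. For a trigonometric polynomial $p(z)=\sum_{n=-N}^{N}a_nz^n$ set $p(U)=\sum_{n=-N}^{N}a_nU^n$, which is legitimate because $U^{-1}=U^*$; each $p(U)$ is normal, so $\|p(U)\|$ equals its spectral radius, and by the spectral mapping theorem this is $\sup_{z\in\sigma(U)}|p(z)|=\|p\|_{C(\sigma(U))}$. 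Thus $p\mapsto p(U)$ is an isometric $*$-homomorphism on trigonometric polynomials, which (being a unital, conjugation-closed, point-separating subalgebra) are dense in $C(\sigma(U))$ by Stone--Weierstrass; it therefore extends uniquely to an isometric $*$-homomorphism $\Phi\colon C(\sigma(U))\to B(\mathcal{H})$, written $\Phi(f)=f(U)$.

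Next, for each $v\in\mathcal{H}$ the functional $f\mapsto\langle f(U)v,v\rangle$ is positive on $C(\sigma(U))$ (if $f\ge 0$ write $f=g^2$ with $g$ real and $\ge 0$, so the value is $\|g(U)v\|^2\ge 0$), so by the Riesz--Markov representation theorem there is a unique finite positive Borel measure $\mu_v$ on $\sigma(U)$, extended by zero to $S^1$, with $\langle f(U)v,v\rangle=\int f\,d\mu_v$; polarization gives complex Borel measures $\mu_{v,w}$ of total variation $\le\|v\|\,\|w\|$ with $\langle f(U)v,w\rangle=\int f\,d\mu_{v,w}$. For any bounded Borel function $f$ on $S^1$ the sesquilinear form $(v,w)\mapsto\int f\,d\mu_{v,w}$ is then bounded by $\|f\|_\infty\|v\|\,\|w\|$ and hence represents an operator $\Phi(f)$; using dominated convergence one checks that this extended $\Phi$ is still linear, adjoint-preserving, and multiplicative on bounded Borel functions (multiplicativity requires a two-step approximation: fix one Borel factor and approximate the other boundedly and pointwise by continuous functions, then vary the first factor).

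Now define $\Pi(E)=\Phi(\mathbf{1}_E)$ for Borel $E\subseteq S^1$. Because $\mathbf{1}_E^2=\overline{\mathbf{1}_E}=\mathbf{1}_E$, each $\Pi(E)$ is an orthogonal projection; $\Pi(\emptyset)=0$ and $\Pi(S^1)=\Phi(1)=I$; and $\Pi(E_1)\Pi(E_2)=\Phi(\mathbf{1}_{E_1}\mathbf{1}_{E_2})=\Phi(\mathbf{1}_{E_1\cap E_2})=\Pi(E_1\cap E_2)$. For countable additivity, observe that for pairwise disjoint $\{E_j\}$ the ranges of the $\Pi(E_j)$ are mutually orthogonal, while $\sum_j\langle\Pi(E_j)v,v\rangle=\sum_j\mu_v(E_j)=\mu_v\!\big(\bigcup_j E_j\big)=\langle\Pi(\bigcup_j E_j)v,v\rangle$; orthogonality of ranges upgrades this scalar identity to strong convergence $\sum_{j\le N}\Pi(E_j)v\to\Pi(\bigcup_j E_j)v$. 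Hence $\Pi$ is a projection-valued measure, and by construction the operator $\int_{S^1}f\,d\Pi$, defined via $\langle(\int f\,d\Pi)v,w\rangle=\int f\,d\mu_{v,w}$, equals $\Phi(f)$; taking $f(z)=z$ gives $\int_{S^1}z\,d\Pi(z)=\Phi(z\mapsto z)=U$. (Alternatively, one could write $U=e^{iA}$ with $A$ bounded self-adjoint and $\sigma(A)\subseteq[0,2\pi)$, and push the self-adjoint spectral measure of $A$ forward along the Borel isomorphism $t\mapsto e^{it}$; the direct route above avoids invoking that result.)

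Uniqueness follows because any projection-valued measure $\Pi'$ with $\int_{S^1}z\,d\Pi'=U$ satisfies $\int z^n\,d\Pi'=U^n$ for all $n\in\mathbb{Z}$ — the negative powers coming from $(\int z\,d\Pi')^*=\int\bar z\,d\Pi'=U^*=U^{-1}$ — so the scalar measures $\langle\Pi'(\cdot)v,v\rangle$ and $\langle\Pi(\cdot)v,v\rangle$ share all moments, hence agree on trigonometric polynomials; density in $C(S^1)$ and the uniqueness clause of Riesz--Markov force them to coincide on all Borel sets, and polarization then yields $\Pi'=\Pi$. I expect the only genuine work to be in the second paragraph — promoting the continuous functional calculus to a $*$-homomorphism on \emph{bounded Borel} functions while preserving multiplicativity without assuming either factor continuous — together with the additivity step in the third paragraph, where scalar $\sigma$-additivity of the $\mu_v$ must be converted into strong operator convergence using orthogonality of the projection ranges.
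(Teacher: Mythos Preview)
Your proof is correct and follows the standard route to the spectral theorem for unitary operators. However, the paper does not actually prove this statement: it appears in Appendix~\ref{ap:spectral} as a quoted background result, stated without proof and with citations to the textbooks of Hall and Taylor. So there is nothing to compare against beyond noting that your argument is a faithful rendering of the proof one would find in those references: build the continuous functional calculus on trigonometric polynomials via Stone--Weierstrass and normality, upgrade to a Borel functional calculus through the Riesz--Markov measures $\mu_{v,w}$, read off the projection-valued measure as $\Pi(E)=\Phi(\mathbf{1}_E)$, and deduce uniqueness from the fact that the moments $\int z^n\,d\Pi$ determine the scalar spectral measures. Your self-assessment of where the real content lies (multiplicativity of the Borel calculus, and passing from scalar to strong $\sigma$-additivity) is accurate.
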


\bibliographystyle{siam}
\bibliography{floquetBib}


\newpage

\end{document}